\providecommand{\tabularnewline}{\\}
\def\RSthmtxt{theorem~}\newref{thm}{name = \RSthmtxt}}
\def\RSlemtxt{lemma~}\newref{lem}{name = \RSlemtxt}}
\theoremstyle{plain}
\newtheorem{thm}{\protect\theoremname}[section]
  \theoremstyle{plain}
  \newtheorem{prop}[thm]{\protect\propositionname}
  \theoremstyle{definition}
  \newtheorem{defn}[thm]{\protect\definitionname}
  \theoremstyle{plain}
  \newtheorem{lem}[thm]{\protect\lemmaname}
  \theoremstyle{plain}
  \newtheorem{cor}[thm]{\protect\corollaryname}
  \theoremstyle{definition}
  \newtheorem{example}[thm]{\protect\examplename}
  \theoremstyle{remark}
  \newtheorem{rem}[thm]{\protect\remarkname}
\DeclareMathOperator{\Hom}{Hom}
\DeclareMathOperator{\id}{id}
\DeclareMathOperator{\tr}{tr}
\DeclareMathOperator{\Rad}{Rad}
\DeclareMathOperator{\PT}{\mathcal{PT}}
\DeclareMathOperator{\E}{\mathcal{E}}
\DeclareMathOperator{\SE}{\mathcal{SE}}
\DeclareMathOperator{\T}{\mathcal{T}}
\DeclareMathOperator{\Stab}{Stab}
\DeclareMathOperator{\Ind}{Ind}
\DeclareMathOperator{\Res}{Res}
\DeclareMathOperator{\op}{op}
\DeclareMathOperator{\sgn}{sgn}
\DeclareMathOperator{\pd}{pd}
\DeclareMathOperator{\Ext}{Ext}
\DeclareMathOperator{\gd}{glDim}
\DeclareMathOperator{\VS}{\bf{VS}}
\DeclareMathOperator{\ds}{ds}
\def\RSlemtxt{Lemma~}
\def\RSthmtxt{Theorem~}
\providecommand{\corollaryname}{Corollary}
  \providecommand{\definitionname}{Definition}
  \providecommand{\examplename}{Example}
  \providecommand{\lemmaname}{Lemma}
  \providecommand{\propositionname}{Proposition}
  \providecommand{\remarkname}{Remark}
\providecommand{\theoremname}{Theorem}
\providecommand{\examplename}{Example}
  \providecommand{\corollaryname}{Corollary}
  \providecommand{\definitionname}{Definition}
  \providecommand{\examplename}{Example}
  \providecommand{\lemmaname}{Lemma}
  \providecommand{\propositionname}{Proposition}
  \providecommand{\remarkname}{Remark}
\providecommand{\theoremname}{Theorem}
\begin{document}

\title{The global dimension of the algebra of the monoid of all partial
functions on an $n$-set as the algebra of the EI-category of epimorphisms
between subsets}

\author{Itamar Stein\thanks{This research was carried out while the author was a PHD student of
Prof. Stuart Margolis at Bar Ilan University. The author's research
was supported by Grant No. 2012080 from the United States-Israel Binational
Science Foundation (BSF). }\\
 Mathematics Unit\\
Shamoon College of Engineering\\
Israel\\
Steinita@gmail.com}
\maketitle
\begin{abstract}
We prove that the global dimension of the complex algebra of the monoid
of all partial functions on an $n$-set is $n-1$ for all $n\geq1$.
This is also the global dimension of the complex algebra of the category
of all epimorphisms between subsets of an $n$-set. In our proof we
use standard homological methods as well as combinatorial techniques
associated to the representation theory of the symmetric group. As
part of the proof, we obtain a partial description of the Cartan matrix
of these algebras.
\end{abstract}

\section{Introduction}

Let $\mathcal{D}$ be a finite monoid, or more generally, a finite
category. It is of interest to study the complex category algebra
$\mathbb{C}\mathcal{D}$ and its representations. Central objects
of research interest are the Jacobson radical, ordinary quiver, quiver
presentation, Cartan matrix, global dimension etc. Note that all these
invariants are virtually trivial in the semisimple case so these questions
does not arise in ordinary group representation theory. However, unlike
(complex) group algebras, category or even monoid algebras are seldom
semisimple. Monoids with natural combinatorial structure are clearly
of major interest. In this paper we study the monoid algebra $\mathbb{C}\PT_{n}$
where $\PT_{n}$ is the monoid of all partial functions on an $n$
element set. Note that in this paper composition of functions is done
from right to left. $\PT_{n}$ is fundamental in monoid theory, for
instance, a major part of \cite{Ganyushkin2009b} is devoted to its
study. Denote by $\mathscr{J}$ the usual Green's relation ($a\mathscr{J}b$
if they generate the same principal ideal, see \cite[Chapter 2]{Howie1995}).
In \cite{Putcha1996} Putcha essentially observed that $\mathbb{C}\PT_{n}$
is co-directed, that means that all the arrows in the quiver are goings
downwards (with respect to the natural partial order on irreducible
representations induced from the $\mathscr{J}$ order). In \cite{Stein2016}
the author proved that $\mathbb{C}\PT_{n}$ is isomorphic to the complex
algebra of $\E_{n}$, the category of all epimorphisms between subsets
of an $n$ element set. Studying representations of $\E_{n}$ is apparently
easier then representations of $\PT_{n}$ because the underlying graph
structure gives us additional information. Using this isomorphism
we were able to give an explicit description of the quiver of $\mathbb{C}\PT_{n}$
and $\mathbb{C}\E_{n}$ as well as some other observations. In this
paper we continue to study the representation theory of these algebras.
The main goal of this paper is finding the global dimension of $\mathbb{C}\PT_{n}\simeq\mathbb{C}\E_{n}$.
We denote the global dimension of an algebra $A$ by $\gd A$. It
is the supremum over the minimal lengths of all projective resolutions
of modules over the algebra. We remark that Steinberg \cite{Steinberg2016}
proved that the global dimension of $\mathbb{C}\T_{n}$ (where $\T_{n}$
is the monoid of all total functions on an $n$ element set) is $n-1$.
Let $M$ be a (finite) regular monoid. A theorem of Nico \cite{Nico1972}
says that the global dimension of $\mathbb{C}M$ is bounded above
by $2k$ where $k$ is the maximal length of a chain in the $\mathscr{J}$
order. For algebras with directed or co-directed quivers, the bound
is at most $k$. Using this result and the observations of Putcha
one can prove that $\gd\mathbb{C}\PT_{n}\leq n-1$. Since the global
dimension is bounded above by the maximal path in the quiver, this
upper bound also follows from the explicit description of the quiver.
In this paper we prove that $\gd\mathbb{C}\E_{n}=\gd\mathbb{C}\PT_{n}=n-1$
for $n\geq1$. For this we use another fundamental invariant of an
algebra, the Cartan matrix. Let $A$ be a finite dimensional $\mathbb{C}$-algebra
with $r$ irreducible representations (up to isomorphism) denoted
$S(1),\ldots S(r)$. The Cartan matrix of $A$ is an $r\times r$
integer matrix whose $(i,j)$ entry is the number of times that $S(i)$
appears as a Jordan-Hölder factor in the projective cover of $S(j)$.
In \secref{CartanMatrixOfEICategoryAlgebra} we give a description
of the Cartan matrix of any EI-category algebra. A category $\mathcal{D}$
is called an EI-category if every endomorphism monoid of $\mathcal{D}$
is a group (so we can speak of the \emph{automorphism groups} of $\mathcal{D}$).
By description, we mean that we reduce the description of the Cartan
matrix to a question in the representation theory of the automorphism
groups. In \secref{RepresentationTheoryOfPTnAndEn} we give some background
and observations on $\mathbb{C}\E_{n}\simeq\mathbb{C}\PT_{n}$. $\E_{n}$
is an EI-category whose automorphism groups are $S_{k}$ for $0\leq k\leq n$
(where $S_{k}$ is the symmetric group on a $k$-element set). Moreover,
the irreducible representations of this algebra are in one-to-one
correspondence with Young diagrams with $k$ boxes where $0\leq k\leq n$.
Therefore, the Cartan matrix $C$ is a $p\times p$ matrix where 
\[
p=\sum_{k=0}^{n}p(k)
\]
and $p(k)$ is the number of integer partitions of $k$. With a natural
ordering of rows and columns, we observe that $C$ is a block upper
unitriangular matrix. Using results from \cite{Stein2016}, it is
easy to describe the first superdiagonal block of $C$ using standard
branching rules for Young diagrams. In \secref{TheSecondBlock} we
use the description of the Cartan matrix obtained in \secref{CartanMatrixOfEICategoryAlgebra}
in order to give a more concrete description of the second block superdiagonal
of the Cartan matrix, again, using branching rules. In \secref{TheGlobalDimension}
we use this description and standard homological methods such as the
long exact sequence theorem in order to prove that the projective
dimension of the simple module corresponding to the diagram $[2,1^{n-2}]$
is $n-1$. This proves that the global dimension is also $n-1$.

\textbf{Acknowledgments:} The author would like to thank Prof. Volodymyr
Mazorchuk for a very useful discussion on the representation theory
of $\mathbb{C}\PT_{n}$. In particular, the author is grateful for
his suggestion to investigate the projective dimension of the simple
module corresponding to the Young diagram $[2,1^{n-2}]$. The author
also thanks the referee for his\textbackslash{}her valuable comments
and remarks.

\section{Preliminaries}

\subsection{Representations of algebras}

Let $A$ be an algebra. We will only discuss unital, finite dimensional
$\mathbb{C}$-algebras. Likewise, when we say that $M$ is a module
over $A$ (or an $A$-module, or an $A$-representation) we mean that
$M$ is a finite dimensional left module over $A$. Details and proof
for facts in this subsection can be found in \cite{Assem2006}.

In this paper, we will mainly discuss category algebras. We will only
discuss finite categories. For every finite category $\mathcal{D}$
denote by $\mathcal{D}^{0}$ its set of objects, by $\mathcal{D}^{1}$
its set of morphisms and by $\mathcal{D}(a,b)$ the hom-set of all
morphisms with domain $a$ and range $b$. The \emph{category algebra}
$\mathbb{\mathbb{C}}\mathcal{D}$ is defined in the following way.
It is a vector space over $\mathbb{C}$ with basis the morphisms of
$\mathcal{D}$, that is, it consists of all formal linear combinations
\[
\{k_{1}m_{1}+\ldots+k_{n}m_{n}\mid k_{i}\in\mathbb{C},\,m_{i}\in\mathcal{D}^{1}\}.
\]
The multiplication in $\mathbb{C}\mathcal{D}$ is the linear extension
of the following:
\[
m^{\prime}\cdot m=\begin{cases}
m^{\prime}m & \exists m^{\prime}\cdot m\\
0 & \text{otherwise}.
\end{cases}
\]
Where $\exists m^{\prime}\cdot m$ mean that the composition of the
morphisms $m^{\prime}$ and $m$ is defined. Since a monoid is a category
with one object, this definition also gives a definition for monoid
algebras. In this case the monoid algebra contains linear combinations
of elements of the monoid with the obvious multiplication. It will
be often convenient to omit the field and call a $\mathbb{C}\mathcal{D}$-module
just a $\mathcal{D}$-module (or a $\mathcal{D}$-representation).

Given some $A$-module $M$, we denote by $\Hom_{A}(M,-)$ the usual
hom functor from the category of all finite dimensional $A$-modules
to the category of $\mathbb{C}$ vector spaces. Recall that an $A$-module
$P$ is called \emph{projective} if $\Hom_{A}(P,-)$ is an exact functor,
or equivalently, if $P$ is a direct summand of a free module $A^{k}$
for some $k\in\mathbb{N}$.\textcolor{black}{{} Recall that two idempotents
}$e,f\in A$\textcolor{black}{{} are called orthogonal if }$ef=fe=0$.\textcolor{black}{{}
A non-zero idempotent} $e\in A$ is called \emph{primitive} if it
is not a sum of two non zero orthogonal idempotents. This is equivalent
to $eAe$ being a local algebra (i.e., an algebra with no non-trivial
idempotents). A \emph{complete set of primitive orthogonal idempotents
}is a set of primitive, mutually orthogonal idempotents $\{e_{1},\ldots,e_{r}\}$
whose sum is $1$. It is well known that every indecomposable projective
module is isomorphic to $Ae$ for some primitive idempotent $e\in A$.
Moreover, every simple $A$-module $S$ is isomorphic to $Ae/\Rad Ae$
for some primitive idempotent $e\in A$ (where $\Rad M$ denotes the
radical of the module $M$). Therefore, we can associate with every
primitive idempotent an indecomposable projective module and a simple
module. Two primitive idempotents $e,f$ are called \emph{equivalent}
if the associated indecomposable projective modules are isomorphic,
i.e, $Ae\simeq Af$. This happens precisely when the associated simple
modules are isomorphic, i.e, $Ae/\Rad(Ae)\simeq Af/\Rad(Af)$.

We recall that $\Ext^{n}(M,-)$ is the $n$-th right derived functor
of $\Hom(M,-)$ where $n\in\mathbb{N}$. For a detailed explanation
on the $\Ext$ functor, see \cite[Chapters 6-7]{Rotman2009}. What
we will need to know about this functor are the following facts: $\Ext^{n}(-,-)$
is an additive functor in both arguments. If $P$ is a projective
$A$-module then $\Ext^{n}(P,N)=0$ for every $n\in\mathbb{N}$ and
every $A$-module $N$. If 
\begin{equation}
0\to N\to K\to M\to0\label{eq:ShortExactSeq}
\end{equation}
is a short exact sequence and $\Ext^{1}(M,N)=0$ then we must have
that $K\simeq M\oplus N$. Moreover, for every short exact sequence
as in \eqref{ShortExactSeq} and for every $A$-module $L$, we can
construct a long exact sequence
\begin{align*}
0 & \to\Hom(M,L)\to\Hom(K,L)\to\Hom(N,L)\to\\
 & \to\Ext^{1}(M,L)\to\Ext^{1}(K,L)\to\Ext^{1}(N,L)\to\\
 & \ldots\\
 & \to\Ext^{m}(M,L)\to\Ext^{m}(K,L)\to\Ext^{m}(N,L)\to\ldots.
\end{align*}

\textcolor{black}{Assume that }$P(1)=Ae_{1},\ldots,P(r)=Ae_{r}$ is
a complete list of the indecomposable projective modules of $A$ up
to isomorphism (where $e_{1}\ldots,e_{r}$ are primitive idempotents).
Denote by $S(i)=P(i)/\Rad P(i)$ the simple module corresponding to
$P(i)$.\textcolor{black}{{} The }\textcolor{black}{\emph{Cartan matrix
}}\textcolor{black}{of }$A$\textcolor{black}{{} is an }$r\times r$
matrix whose $(i,j)$ entry is the number of times that $S(i)$ appear
as a Jordan-Hölder factor of $P(j)$. This number is also equal to
$\dim e_{i}Ae_{j}$\textcolor{black}{.}

We denote by $\pd(M)$ the\emph{ projective dimension} of the $A$-module
$M$, which is the minimal $n$ for which $\Ext^{n+1}(M,N)=0$ for
every $A$-module $N$. The \emph{global dimension $\gd A$} of an
algebra $A$ is defined by 
\[
\gd A=\sup\{\pd(M)\mid M\text{ is an \ensuremath{A} module}\}
\]
and it is known that it is enough to take the supremum only on the
simple modules, that is,
\[
\gd A=\sup\{\pd(S)\mid S\text{ is a simple \ensuremath{A} module}\}.
\]

Two algebras $A$ and $B$ are called \emph{Morita equivalent} if
the category of all $A$-modules is equivalent to the category of
all $B$-modules. Morita equivalent algebras share many properties,
for instance they have the same global dimension and identical Cartan
matrices.

\textcolor{black}{The }\textcolor{black}{\emph{ordinary quiver}}\textcolor{black}{{}
}$Q$\textcolor{black}{{} of }$A$\textcolor{black}{{} is a directed graph
defined in the following way.} The vertices of $Q$ are in a one-to-one
correspondence with the irreducible representations of $A$ (up to
isomorphism). If $S(i)$ and $S(j)$ are two irreducible representations
of $A$ (identified with two vertices of the quiver), then the number
of arrows from $S(i)$ to $S(j)$ is

\[
\dim\Ext^{1}(S(i),S(j)).
\]
The quiver $Q$ of the algebra $A$ gives, in some sense, the generators
for $A$ in a generators and relations presentation. The exact explanation
is as follows. We denote by $Q^{\ast}$ the free category generated
by $Q$. $Q^{\ast}$ has precisely the same set of objects as $Q$
but its morphisms are paths in $Q$ (including a trivial path of length
$0$ for each object). Now we can form the algebra $\mathbb{C}Q^{\ast}$
which is called the \emph{path algebra} of $Q$. There exists an ideal
$I$ (satisfying some technical property called admissibility) such
that $\mathbb{C}Q^{\ast}/I$ is Morita equivalent to $A$. As usual,
we say that two elements $x,y\in\mathbb{C}Q^{\ast}$ are equivalent
(modulo $I$) if $x-y\in I$. It will be important to know few additional
facts about the quiver. For every object $a$ of $Q$, denote by $1_{a}$
the empty path of $a$. It is known that $\{1_{a}+I\mid a\in Q^{0}\}$
form a complete set of primitive orthogonal idempotents of $\mathbb{C}Q^{\ast}/I$.
Moreover, it is known that $1_{a}+I$ is not equivalent to $1_{b}+I$
if $a\ne b$. The projective module corresponding to $a\in Q^{0}$
is $P(a)=(\mathbb{C}Q^{\ast}/I)\cdot(1_{a}+I)$. It consists of all
equivalence classes of linear combination of paths that start at $a$.
It will be also important to understand how the Cartan matrix can
be seen inside the quiver presentation. For elements $a,b\in Q^{0}$,
denote by $S(b)$ the simple module that corresponds to $b$ and by
$V_{a,b}$ the $\mathbb{C}$-vector space spanned by the paths that
start at $a$ and end at $b$. The intersection $I\cap V_{a,b}$ is
a subspace of $V_{a,b}$. The number of times that $S(b)$ appears
as a Jordan-Hölder factor of $P(a)$ (i.e. the $(b,a)$ entry of the
Cartan matrix) is precisely the dimension of the quotient space $V_{a,b}/I\cap V_{a,b}$
(see \cite[Lemma 2.4 of Chapter III]{Assem2006}). In some sense this
is the number of paths from $a$ to $b$ modulo $I$.

Another important fact is that the global dimension of $A$ is bounded
above by the length of the longest path in $Q$ if $Q$ is acyclic.

\subsection{Complex group representations}

Let $G$ be a finite group. By Maschke's theorem, the complex group
algebra $\mathbb{C}G$ is a semisimple algebra. In particular, an
irreducible module $S$ is also an indecomposable projective module
so it is isomorphic to $\mathbb{C}Ge$ for some primitive idempotent
$e$. Moreover, it is known that if $e$ is a primitive idempotent
then there is an isomorphism of algebras $e\mathbb{C}Ge\simeq\mathbb{C}$.
We denote the trivial representation of any group $G$ by $\tr_{G}$
and the trivial representation of the symmetric group $S_{n}$ by
$\tr_{n}$. 

Let $H\subseteq G$ be a subgroup of $G$ and let $V$ ($U$) be a
$G$-module (respectively, an $H$-module). We denote by $\Res_{H}^{G}V$
and $\Ind_{H}^{G}U$ the \emph{restriction} and \emph{induction} representations,
respectively. Recall that 
\[
\Ind_{H}^{G}U=\mathbb{C}G\underset{\mathbb{C}H}{\otimes}U.
\]

For every $G$-representation $V$, we denote by $\chi_{V}$ its character.
For functions $\rho,\psi:G\to\mathbb{C}$, we denote by $\langle\rho,\psi\rangle$
the inner product 
\[
\langle\rho,\psi\rangle=\frac{1}{|G|}{\displaystyle \sum_{g\in G}}\rho(g)\overline{\psi(g)}.
\]
In order to simplify notation, we sometimes omit the $\chi$ and write
$V$ also for the character of the module $V$. For instance, we can
write Frobenius reciprocity as the following equality: 
\[
\langle\Ind_{H}^{G}V,U\rangle=\langle V,\Res_{H}^{G}U\rangle
\]

where $U$,$V$,$\Ind_{H}^{G}V$ and $\Res_{H}^{G}U$ are the respective
characters.

Assume that $G$ is acting on some finite set $X$. Denote by $\mathbb{C}X$
the vector space of all linear combinations of elements of $X$. $\mathbb{C}X$
is a $G$-representation in the natural way. A representation of this
form is called a \emph{permutation representation.} It is well known
that if $X_{1},\ldots,X_{r}$ are the orbits of this action then $\mathbb{C}X=\mathbb{C}X_{1}\oplus\cdots\oplus\mathbb{C}X_{r}$.
Now assume that $G$ is acting transitively on $X$ and\emph{ }let
$K$ the stabilizer of some $x\in X$. It is well known that $\mathbb{C}X\cong\Ind_{K}^{G}\tr_{K}$,
no matter which $x\in X$ is chosen.

We now consider the special case where $G=S_{n}$ is the symmetric
group. Proofs and more details on this case can be found in \cite{James1981,Sagan2001}.
Recall that an \emph{integer composition} of $n$ is a tuple $\lambda=[\lambda_{1},\ldots,\lambda_{k}]$
of non-negative integers such that $\lambda_{1}+\cdots+\lambda_{k}=n$
while an \emph{integer partition of $n$ }(denoted $\lambda\vdash n$)
is an integer composition such that $\lambda_{1}\geq\lambda_{2}\geq\cdots\geq\lambda_{k}>0$.
Note that $0$ has one partition, namely, the empty partition, denoted
by $\varnothing$. We can associate to any partition $\lambda$ a
graphical description called a \emph{Young diagram}, which is a table
with $\lambda_{i}$ boxes in its $i$-th row. For instance, the Young
diagram associated to the partition $[3,3,2,1]$ of $9$ is:

\ytableausetup{centertableaux}\ytableausetup{smalltableaux}
\[
\ydiagram{3,3,2,1}
\]

We will identify the two notions and regard integer partition and
Young diagram as synonyms. It is well known that irreducible representations
of $S_{n}$ are indexed by integer partitions of $n$. We denote the
irreducible representation associated to the partition $\lambda$
(also called its \emph{Specht module}) by $S^{\lambda}$. Explicit
description of $S^{\lambda}$ can be found in \cite[Section 2.3]{Sagan2001}.
A convenient abbreviation will be to write $[1^{k}]$ instead of $[\underset{k\text{ times}}{\underbrace{1,\ldots,1}}]$
and likewise $[2,1^{k}]$ for $[2,\underset{k\text{ times}}{\underbrace{1,\ldots,1}}]$
etc. 

We now turn to describe the Littlewood-Richardson branching rule that
will play a crucial role in the sequel. If we identify $S_{k}$ ($S_{r}$)
with the group of all permutations of $\{1,\ldots,k+r\}$ that leave
$\{k+1,\ldots,k+r\}$ (respectively, $\{1,\ldots,k\}$) fixed we can
view $S_{k}\times S_{r}$ as a subgroup of $S_{k+r}$. Given $\lambda\vdash k$
and $\delta\vdash r$, we denote by $S^{\lambda}\otimes S^{\delta}$
the outer tensor product of $S^{\lambda}$ and $S^{\delta}$ which
is a $\mbox{\ensuremath{S_{k}\times S_{r}}}$-representation. The
Littlewood\textendash Richardson rule gives the decomposition of $\Ind_{S_{k}\times S_{r}}^{S_{k+r}}(S^{\lambda}\otimes S^{\delta})$
into irreducible $S_{k+r}$-representations. In other words, if we
write this decomposition as 
\[
\Ind_{S_{k}\times S_{r}}^{S_{k+r}}(S^{\lambda}\otimes S^{\delta})=\bigoplus_{\gamma\vdash(k+r)}c_{\lambda,\delta}^{\gamma}S^{\gamma},
\]

it gives a combinatorial interpretation for the coefficients $c_{\lambda,\delta}^{\gamma}$
(called the Littlewood\textendash Richardson coefficients). For this
we have to introduce some more notions. First we generalize the notion
of a Young diagram. For $k\leq n$ and $r\leq s$, let $\lambda=[\lambda_{1},\cdots,\lambda_{r}]\vdash k$
and $\gamma=[\gamma_{1},\cdots,\gamma_{s}]\vdash n$ be partitions
such that $\lambda_{i}\leq\gamma_{i}$ for every $1\leq i\leq r$.
The \emph{skew diagram} $\gamma/\lambda$ is the diagram obtained
by erasing the diagram $\lambda$ from the diagram $\gamma$. For
instance, if $\lambda=[2,1]$ and $\gamma=[4,3,1]$ then $\gamma/\lambda$
is the skew diagram

\ytableausetup{centertableaux}\ytableausetup{smalltableaux}
\[
\ydiagram{2+2,1+2,1}.
\]

A \emph{skew tableau} is a skew diagram whose boxes are filled with
numbers. We call the original diagram the \emph{shape} of the tableau.
Let $t$ be a skew tableau with $n$ boxes such that the number of
boxes with entry $i$ is $\delta_{i}$. The \emph{content} of $t$
is the composition $\delta=[\delta_{1},\ldots,\delta_{l}]$. We say
that a skew tableau is \emph{semi-standard} if its columns are increasing
and its rows are non-decreasing. For instance

\begin{align} \label{eq:exampleTableau}  \begin{ytableau}  \none & \none & 1 & 1 \\ \none & 2 & 3 \\ 2 \end{ytableau}\end{align}

is a semi-standard skew tableau of shape $[4,3,1]/[2,1]$ with content
$[2,2,1]$. The \emph{row word} of a skew tableau $t$ is the string
of numbers obtained by reading the entries of $t$ from right to left
and top to bottom. For instance, the row word of tableau \ref{eq:exampleTableau}
is $11322$. A string of numbers is called a \emph{lattice permutation}
if for every prefix of the string and for every number $i$, there
are no less occurrences of $i$ than occurrences of $i+1$. For instance,
the string $11322$ is not a lattice permutation since the prefix
$113$ contains one $3$ and no $2$'s. Now we can state the Littlewood-Richardson
rule (for proof see \cite[Theorem 2.8.13]{James1981}). 
\begin{thm}
\label{thm:Littlewood-Richardson rule} The Littlewood-Richardson
coefficient $c_{\lambda,\delta}^{\gamma}$ is the number of semi-standard
skew tableaux of shape $\gamma/\lambda$ with content $\delta$ whose
row word is a lattice permutation.
\end{thm}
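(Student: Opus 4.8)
The plan is to leave the symmetric groups behind and reformulate the statement inside the ring $\Lambda$ of symmetric functions, where it becomes a purely combinatorial identity about skew Schur functions. First I would invoke the Frobenius characteristic map $\mathrm{ch}\colon\bigoplus_{n\ge 0}R(S_{n})\to\Lambda$, the linear isomorphism determined by $\mathrm{ch}(S^{\lambda})=s_{\lambda}$; one checks that it sends the Young permutation modules $M^{\mu}$ (induced trivial representations of Young subgroups) to the complete homogeneous symmetric functions $h_{\mu}$, and, crucially, that it carries the induction product $\Ind_{S_{k}\times S_{r}}^{S_{k+r}}(-\otimes-)$ to ordinary multiplication in $\Lambda$. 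Hence $c_{\lambda,\delta}^{\gamma}$ is precisely the coefficient of $s_{\gamma}$ in $s_{\lambda}s_{\delta}$. Combining the coproduct identity $s_{\gamma}(x,y)=\sum_{\mu}s_{\mu}(x)\,s_{\gamma/\mu}(y)$ (where $s_{\gamma/\mu}$ is the skew Schur function), the self-adjointness of multiplication and comultiplication of $\Lambda$, and the orthonormality of $\{s_{\lambda}\}$ for the Hall inner product $\langle-,-\rangle$, one obtains $c_{\lambda,\delta}^{\gamma}=\langle s_{\lambda}s_{\delta},s_{\gamma}\rangle=\langle s_{\delta},s_{\gamma/\lambda}\rangle$. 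So it suffices to prove that the multiplicity of $s_{\delta}$ in $s_{\gamma/\lambda}$ equals the number of semi-standard skew tableaux of shape $\gamma/\lambda$ and content $\delta$ whose row word is a lattice permutation.

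Next I would set up the combinatorial model of skew Schur functions: $s_{\gamma/\lambda}=\sum_{T}x^{\mathrm{content}(T)}$, the sum over semi-standard skew tableaux $T$ of shape $\gamma/\lambda$, where $x^{[\delta_{1},\delta_{2},\ldots]}=x_{1}^{\delta_{1}}x_{2}^{\delta_{2}}\cdots$. That this power series is actually symmetric is proved by exhibiting, for each adjacent transposition $(i,i+1)$, a content-swapping involution on such tableaux (the Bender--Knuth involution, which interchanges the number of entries equal to $i$ and to $i+1$). Expanding in the monomial basis and pairing with $h_{\delta}$ recovers the Kostka-type count $\langle s_{\gamma/\lambda},h_{\delta}\rangle=\#\{\text{semi-standard skew tableaux of shape }\gamma/\lambda,\ \text{content }\delta\}$; but what is actually needed is the pairing with $s_{\delta}$, and the two differ by inverse Kostka numbers. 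Isolating the ``$s_{\delta}$-part'' is where the genuine combinatorics enters.

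The heart of the argument is jeu de taquin, equivalently Knuth (plactic) equivalence. The ingredients I would assemble are: (a) the confluence theorem --- iterating jeu-de-taquin slides on a skew tableau until its shape is straight yields a well-defined tableau $\mathrm{rect}(T)$, independent of the order in which the slides are performed; this follows because a single slide preserves the Knuth class of the row word and because each Knuth class contains a unique straight-shape semi-standard tableau, namely its RSK insertion tableau. (b) The tableau-product (plactic) description of multiplication: $s_{\lambda}s_{\delta}=\sum_{\gamma}c_{\lambda,\delta}^{\gamma}s_{\gamma}$ is realised by concatenating row words of tableaux of shapes $\lambda$ and $\delta$ and inserting, from which one deduces that $c_{\lambda,\delta}^{\gamma}$ is the number of semi-standard skew tableaux of shape $\gamma/\lambda$ whose rectification equals one fixed tableau $T_{0}$ of shape $\delta$, and that this count does not depend on the choice of $T_{0}$. (c) Taking $T_{0}$ to be the superstandard tableau $U_{\delta}$ whose $i$-th row is filled entirely with $i$'s (so its content is $\delta$), a semi-standard skew tableau rectifies to $U_{\delta}$ exactly when its row word is a lattice permutation (a Yamanouchi word). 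Combining (b) and (c) yields the stated formula; along the way one must match the reading conventions of the statement --- entries read right-to-left and top-to-bottom, ``lattice'' meaning no fewer $i$'s than $(i+1)$'s in every prefix --- with the Yamanouchi characterisation of rectifying to $U_{\delta}$.

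The step I expect to be the main obstacle is the combination of (a) and (c): the well-definedness of rectification together with the equivalence ``the row word is a lattice permutation $\iff$ the tableau rectifies to the superstandard tableau''. That package is essentially the entire theory of Knuth equivalence / the plactic monoid, whereas the passage to $\Lambda$, the symmetry of skew Schur functions via Bender--Knuth, and the adjunction $\langle s_{\lambda}s_{\delta},s_{\gamma}\rangle=\langle s_{\delta},s_{\gamma/\lambda}\rangle$ are comparatively routine. A more representation-theoretic route avoids $\Lambda$ altogether: define the skew Specht modules $S^{\gamma/\lambda}$, show that over $\mathbb{C}$ the restriction $\Res_{S_{k}\times S_{r}}^{S_{k+r}}S^{\gamma}$ decomposes as $\bigoplus_{\lambda}S^{\lambda}\otimes S^{\gamma/\lambda}$, so that $c_{\lambda,\delta}^{\gamma}=\langle S^{\gamma/\lambda},S^{\delta}\rangle$ by Frobenius reciprocity, and then invoke James's semi-standard basis theorem for skew Specht modules to count this multiplicity by lattice skew tableaux; the combinatorial core is the same.
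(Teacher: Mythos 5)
The paper does not actually prove this theorem: it states the Littlewood--Richardson rule and refers the reader to James--Kerber \cite[Theorem 2.8.13]{James1981} for the proof, so there is no in-paper argument to compare against. Judged on its own, your outline is a correct and standard route --- essentially the jeu-de-taquin/plactic proof in the style of Fulton's \emph{Young Tableaux} or Stanley's EC2 appendix. The reductions you list are all sound: the Frobenius characteristic turns the induction product into multiplication in $\Lambda$, the adjunction $\langle s_{\lambda}s_{\delta},s_{\gamma}\rangle=\langle s_{\delta},s_{\gamma/\lambda}\rangle$ is the standard skewing identity, and you correctly flag that pairing with $h_{\delta}$ only gives the skew Kostka number, so the real content is the plactic package: slides preserve Knuth classes, each Knuth class has a unique straight-shape representative, hence rectification is well defined; $c_{\lambda,\delta}^{\gamma}$ counts skew tableaux of shape $\gamma/\lambda$ rectifying to a fixed $T_{0}$ of shape $\delta$; and rectifying to the superstandard $U_{\delta}$ is equivalent to the (right-to-left, top-to-bottom) row word being a lattice permutation, which matches the paper's reading convention. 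This differs from the cited source, whose proof is representation-theoretic (via Specht modules and semistandard homomorphisms) and is closer to the alternative you sketch in your last sentence; your symmetric-function route is more self-contained combinatorially but requires developing the plactic monoid from scratch, whereas the James--Kerber route leans on the module-theoretic machinery already built in that book. For the purposes of this paper either is acceptable, since the theorem is used only as a black box (mainly through Pieri's rule).
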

The special case where $\delta=[r]=\tr_{S_{r}}$ is the trivial representation
of $S_{r}$ is called Pieri's rule. It is worth stating this special
case.
\begin{prop}[Pieri's rule]
\label{prop:Pieri'sRule} Let $\lambda\vdash k$ be a Young diagram.
Denote by $Y^{r}(\lambda)$ the set of Young diagrams obtained from
$\lambda$ by adding $r$ boxes with no two of them in the same column.
Then 
\[
\Ind_{S_{k}\times S_{r}}^{S_{k+r}}(S^{\lambda}\otimes\tr_{S_{r}})=\bigoplus_{\gamma\in Y^{r}(\lambda)}S^{\gamma}.
\]
\end{prop}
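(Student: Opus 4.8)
The plan is to derive Pieri's rule directly from the Littlewood--Richardson rule (\thmref{Littlewood-Richardson rule}) by specializing to $\delta=[r]$, using that $S^{[r]}$ is the trivial representation $\tr_{S_{r}}$. Substituting into the rule gives
\[
\Ind_{S_{k}\times S_{r}}^{S_{k+r}}(S^{\lambda}\otimes\tr_{S_{r}})=\bigoplus_{\gamma\vdash(k+r)}c_{\lambda,[r]}^{\gamma}\,S^{\gamma},
\]
so the whole proposition reduces to the purely combinatorial claim that $c_{\lambda,[r]}^{\gamma}=1$ when $\gamma\in Y^{r}(\lambda)$ and $c_{\lambda,[r]}^{\gamma}=0$ otherwise.

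First I would unwind the definition of the semi-standard skew tableaux counted by $c_{\lambda,[r]}^{\gamma}$. Having content $[r]$ forces every one of the $r$ boxes of $\gamma/\lambda$ to carry the entry $1$, so there is at most one such filling to begin with, and its row word is the string $1^{r}$. This string is trivially a lattice permutation: the defining inequalities compare the number of occurrences of $i$ with the number of occurrences of $i+1$ in each prefix, and for the word $1^{r}$ every count beyond that of $1$ is zero. Hence the lattice-permutation hypothesis of \thmref{Littlewood-Richardson rule} imposes no restriction here, and $c_{\lambda,[r]}^{\gamma}$ simply records whether or not the all-$1$'s filling of $\gamma/\lambda$ is semi-standard.

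Next I would check when that happens. Since all entries equal $1$, the rows of the filling are automatically weakly increasing, so the only remaining condition is that the columns be strictly increasing. A column of $\gamma/\lambda$ containing two or more boxes would then contain two equal entries and fail this, whereas if no column of $\gamma/\lambda$ contains two boxes the filling is (uniquely) semi-standard. Thus $c_{\lambda,[r]}^{\gamma}\in\{0,1\}$, and it equals $1$ precisely when $\lambda\subseteq\gamma$, $|\gamma/\lambda|=r$, and $\gamma/\lambda$ has at most one box in every column; but that is exactly the description of the set $Y^{r}(\lambda)$. Feeding this back into the displayed decomposition yields the asserted identity.

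I do not anticipate a real obstacle: the argument is bookkeeping on top of the already-stated Littlewood--Richardson rule. The only points that require a moment's care are confirming that the lattice-permutation condition is vacuous for the word $1^{r}$ and that ``semi-standard with content $[r]$'' matches the combinatorial definition of $Y^{r}(\lambda)$ on the nose --- in particular that the containment $\lambda\subseteq\gamma$ needed for $\gamma/\lambda$ to be a well-formed skew diagram is already part of that definition.
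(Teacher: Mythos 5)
Your derivation is correct, and it follows exactly the route the paper intends: the paper introduces Pieri's rule precisely as the special case $\delta=[r]$ of the Littlewood--Richardson rule, and your argument just supplies the (routine but correct) combinatorial bookkeeping showing that $c_{\lambda,[r]}^{\gamma}$ is $1$ or $0$ according to whether $\gamma\in Y^{r}(\lambda)$.
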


\section{\label{sec:CartanMatrixOfEICategoryAlgebra}Cartan matrix of an EI-category
algebra}
\begin{defn}
A category $\mathcal{D}$ is called an \emph{EI-category} if every
endomorphism is an isomorphism or, equivalently, if every endomorphism
monoid of $\mathcal{D}$ is a group.
\end{defn}
The goal of this section is to describe the Cartan matrix of the algebra
of a finite EI-category. By a description we mean that we want to
reduce the problem to the representation theory of the automorphism
groups. We will need this description in the next section only for
one specific EI-category, but giving the general case is quite the
same. Representation theory of EI-categories is a well-studied subject
(see \cite{Luck} or \cite[Chapter 1 Section 11]{Dieck}) and all
facts in this section appear in the literature or known as a folklore.
However, we sketch some of the proofs for the sake of completeness.
\begin{defn}
A category $\mathcal{D}$ is called \emph{skeletal} if no two objects
of $\mathcal{D}$ are isomorphic.
\end{defn}
Note that any category $\mathcal{D}$ is equivalent to some skeletal
category (which is unique up to isomorphism) called its \emph{skeleton}.
The skeleton of $\mathcal{D}$ is the full subcategory having one
object from every isomorphism class of $\mathcal{D}$. It is well
known that algebras of equivalent categories are Morita equivalent
\cite[Proposition 2.2]{Webb2007}, so they have the same Cartan matrix.
In particular, the algebra of an EI-category $\mathcal{D}$ and its
skeleton have the same Cartan matrix. Therefore, without any loss
of generality we can fix from now on $\mathcal{D}$ to be a finite
and skeletal EI-category and concentrate on finding the Cartan matrix
of $\mathcal{D}$. One simple but important observation on skeletal
EI-categories is that their objects are naturally ordered.
\begin{defn}
Let $\mathcal{D}$ be a skeletal EI-category. Define a relation $\leq_{\mathcal{D}}$
on $\mathcal{D}^{0}$ by $a\leq_{\mathcal{D}}b$ if $\mathcal{D}(a,b)\neq\varnothing$.
\end{defn}
\begin{lem}[{\cite[Page 170]{Luck}}]
The relation $\leq_{\mathcal{D}}$ is a partial order.
\end{lem}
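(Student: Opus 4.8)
The claim is that $\leq_{\mathcal{D}}$, defined by $a\leq_{\mathcal{D}}b$ if $\mathcal{D}(a,b)\neq\varnothing$, is a partial order on the objects of a skeletal EI-category. So I must verify reflexivity, transitivity, and antisymmetry. The plan is to dispatch reflexivity and transitivity as immediate consequences of the category axioms, and then to isolate antisymmetry as the one place where both the EI-hypothesis and skeletality are genuinely used.

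First I would note reflexivity: every object $a$ has the identity morphism $1_a\in\mathcal{D}(a,a)$, so $\mathcal{D}(a,a)\neq\varnothing$ and $a\leq_{\mathcal{D}}a$. Next, transitivity: if $a\leq_{\mathcal{D}}b$ and $b\leq_{\mathcal{D}}c$, pick $f\in\mathcal{D}(a,b)$ and $g\in\mathcal{D}(b,c)$; then the composite $g\circ f$ lies in $\mathcal{D}(a,c)$, which is therefore nonempty, so $a\leq_{\mathcal{D}}c$. Neither of these steps uses any hypothesis beyond "$\mathcal{D}$ is a category."

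The substantive step is antisymmetry, and this is where I expect the only real (if modest) obstacle. Suppose $a\leq_{\mathcal{D}}b$ and $b\leq_{\mathcal{D}}a$; I want $a=b$. Choose $f\in\mathcal{D}(a,b)$ and $g\in\mathcal{D}(b,a)$. Then $g\circ f\in\mathcal{D}(a,a)=\End_{\mathcal{D}}(a)$ is an endomorphism of $a$, and likewise $f\circ g\in\End_{\mathcal{D}}(b)$. Since $\mathcal{D}$ is an EI-category, every endomorphism is an isomorphism; in particular $g\circ f$ is invertible. From invertibility of $g\circ f$ (together with invertibility of $f\circ g$, or directly) one concludes that $f$ is itself an isomorphism: a standard diagram chase shows $f$ has both a left and a right inverse, hence a two-sided inverse. (Concretely, $(g\circ f)^{-1}\circ g$ is a left inverse of $f$, and $g\circ(f\circ g)^{-1}$ is a right inverse, and a morphism with a left inverse and a right inverse has them equal and is invertible.) Thus $a$ and $b$ are isomorphic objects of $\mathcal{D}$. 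But $\mathcal{D}$ is skeletal, so no two distinct objects are isomorphic, forcing $a=b$. This completes antisymmetry and hence the proof that $\leq_{\mathcal{D}}$ is a partial order. The delicate point worth spelling out carefully is precisely the deduction "$g\circ f$ and $f\circ g$ are isomorphisms $\Rightarrow$ $f$ is an isomorphism," since it is exactly there that the EI-axiom does its work; everything else is bookkeeping.
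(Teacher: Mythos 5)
Your proof is correct. The paper itself gives no argument for this lemma --- it is simply cited from L\"uck --- and your verification (reflexivity and transitivity from the category axioms, antisymmetry from the observation that $g\circ f$ and $f\circ g$ being automorphisms forces $f$ to be an isomorphism, whence skeletality gives $a=b$) is exactly the standard proof one would supply.
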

Given $a\in\mathcal{D}^{0}$, denote $G_{a}=\mathcal{D}(a,a)$ to
be its automorphism group and $E_{a}=\{e_{1}^{a},\ldots,e_{m_{a}}^{a}\}$
a complete set of primitive orthogonal idempotents for $\mathbb{C}G_{a}$.
The following fact can be deduced from \cite[Lemma 9.31]{Luck} and
appears also in \cite[Proposition 2.3]{Webb}.
\begin{lem}
\label{lem:EquivalenceOfPrimitiveIdempotents}
\end{lem}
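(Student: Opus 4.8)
The statement we must prove (Lemma \ref{lem:EquivalenceOfPrimitiveIdempotents}) concerns a complete set of primitive orthogonal idempotents for $\mathbb{C}\mathcal{D}$ built out of the local data $E_a=\{e_1^a,\ldots,e_{m_a}^a\}$ at each object $a$. Since the excerpt breaks off exactly at the statement, the natural content of the lemma is: the set $\{1_a\cdot e_i^a\cdot 1_a \mid a\in\mathcal{D}^0,\ 1\le i\le m_a\}$ (viewing $e_i^a\in\mathbb{C}G_a\subseteq\mathbb{C}\mathcal{D}$ via the inclusion of the endomorphism group) is a complete set of primitive orthogonal idempotents of $\mathbb{C}\mathcal{D}$, and $e_i^a$ is equivalent to $e_j^b$ as idempotents of $\mathbb{C}\mathcal{D}$ if and only if $a=b$ and $e_i^a$ is equivalent to $e_j^a$ inside $\mathbb{C}G_a$. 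The plan is to verify each of the three clauses (orthogonal, primitive, complete, plus the equivalence criterion) using the order structure on $\mathcal{D}^0$ and the filtration of $\mathbb{C}\mathcal{D}$ it induces.

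First I would record the ideal filtration. For a downward-closed (or upward-closed, depending on conventions) subset $U\subseteq\mathcal{D}^0$, the span of morphisms whose domain or range lies outside $U$ is a two-sided ideal; intersecting such ideals gives, for each $a$, an ideal $I_a\subseteq\mathbb{C}\mathcal{D}$ such that $1_a\mathbb{C}\mathcal{D}\,1_a/(1_a I_a 1_a)\cong\mathbb{C}G_a$, because every endomorphism of $a$ that is not an automorphism would have to factor through a strictly smaller object, and in a skeletal EI-category there are no such endomorphisms (all endomorphisms are automorphisms) — so in fact $1_a\mathbb{C}\mathcal{D}\,1_a$ already contains $\mathbb{C}G_a$ as a subalgebra, and the relevant statement is that the $1_a$ are orthogonal idempotents summing to $1=\sum_a 1_a$ with $1_a\mathbb{C}\mathcal{D}\,1_b\neq 0$ only if $a\le_{\mathcal{D}} b$ (or $b\le_{\mathcal{D}}a$, per the composition convention). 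Orthogonality and completeness of the refined family $\{1_a e_i^a\}$ then follow immediately: distinct objects give orthogonal $1_a$, within a fixed $a$ the $e_i^a$ are orthogonal in $\mathbb{C}G_a$, and $\sum_{a,i}1_a e_i^a=\sum_a 1_a\cdot 1_{\mathbb{C}G_a}=\sum_a 1_a=1$.

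The substantive points are primitivity and the equivalence criterion, and this is where I expect the main obstacle. For primitivity one shows $(1_a e_i^a)\,\mathbb{C}\mathcal{D}\,(1_a e_i^a)$ is local; the idea is that the ideal $N$ spanned by all morphisms that are not automorphisms of $a$ is nilpotent when cut down by $1_a e_i^a$ on both sides (a morphism strictly lowering the object cannot be undone, so any product of enough such factors within $1_a\mathbb{C}\mathcal{D}\,1_a$ vanishes — this uses finiteness of $\mathcal{D}$ and the partial order), and modulo that ideal one is left with $e_i^a\,\mathbb{C}G_a\,e_i^a$, which is local (indeed $\cong\mathbb{C}$) since $e_i^a$ is primitive in the semisimple algebra $\mathbb{C}G_a$. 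A local algebra mod a nilpotent ideal being local forces the original corner to be local, hence $1_a e_i^a$ is primitive. For the equivalence criterion, the key computation is $\dim\bigl(e_i^a\,\mathbb{C}\mathcal{D}\,e_j^b\bigr)$: reduce modulo the radical-type ideal $N$ above to see that this dimension equals $\dim\bigl(e_i^a\,\mathbb{C}\mathcal{D}\,e_j^b\bigr)$ counted "at the top", which is $0$ unless $a=b$ and then equals $\dim(e_i^a\,\mathbb{C}G_a\,e_j^a)$; combined with the standard fact that two primitive idempotents are equivalent iff the corresponding simple modules agree iff $\dim e\,A\,f\neq 0$ with the right multiplicities, this gives $1_a e_i^a\sim 1_b e_j^b$ in $\mathbb{C}\mathcal{D}$ exactly when $a=b$ and $e_i^a\sim e_j^a$ in $\mathbb{C}G_a$. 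The one delicate bookkeeping issue — and the likely sticking point — is making the "reduction modulo morphisms that lower the object" rigorous as an algebra surjection $1_a\mathbb{C}\mathcal{D}\,1_a\twoheadrightarrow\mathbb{C}G_a$ with nilpotent (or at least radical) kernel, uniformly in $a$; once that is in place the rest is the semisimple representation theory of the groups $G_a$ together with the general idempotent facts recalled in the preliminaries.
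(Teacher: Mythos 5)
The paper offers no proof of this lemma at all --- it is quoted from L\"uck and Webb --- so there is no in-house argument to compare with; what you give is the standard folklore proof, and you reconstructed the content of the statement correctly. Most of your outline works, but the first half is easier than you make it. In a skeletal EI-category a morphism $m$ with $1_a m 1_a=m$ is an endomorphism of $a$, hence an automorphism, so $1_a\mathbb{C}\mathcal{D}1_a=\mathbb{C}\mathcal{D}(a,a)=\mathbb{C}G_a$ \emph{exactly}, not merely ``contains $\mathbb{C}G_a$ as a subalgebra''. Consequently $e_i^a\mathbb{C}\mathcal{D}e_i^a=e_i^a\mathbb{C}G_ae_i^a\cong\mathbb{C}$ is local and primitivity is immediate; no nilpotent ideal enters, and the ``delicate bookkeeping issue'' you flag at the end (a surjection $1_a\mathbb{C}\mathcal{D}1_a\twoheadrightarrow\mathbb{C}G_a$ with nilpotent kernel) is a non-issue because that kernel is zero. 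Orthogonality and completeness are as you say, from $1_a=\sum_ie_i^a$ and $1=\sum_a1_a$.

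The genuine problem is the equivalence criterion. As literally written --- ``$\dim\bigl(e_i^a\mathbb{C}\mathcal{D}e_j^b\bigr)$ is $0$ unless $a=b$'' and ``$e\sim f$ iff $\dim eAf\neq0$'' --- both claims are false: $\dim e_j^b\mathbb{C}\mathcal{D}e_i^a$ is precisely the $(j,i)$ Cartan entry (this is the computation in \propref{CartanMatrixEICatFirstForm}), which is typically nonzero for $a\neq b$, and nonvanishing of $eAf$ does not imply equivalence (already false for the path algebra of $\bullet\to\bullet$). Two repairs are available. Either argue directly: if $e_i^a\sim e_j^b$ there are $x\in e_i^a\mathbb{C}\mathcal{D}e_j^b$ and $y\in e_j^b\mathbb{C}\mathcal{D}e_i^a$ with $xy=e_i^a$, $yx=e_j^b$, so both $\mathcal{D}(a,b)$ and $\mathcal{D}(b,a)$ are nonempty and antisymmetry of $\leq_{\mathcal{D}}$ forces $a=b$; then both witnesses lie in $\mathbb{C}G_a$, so equivalence in $\mathbb{C}\mathcal{D}$ coincides with equivalence in $\mathbb{C}G_a$. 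Or make your ``counted at the top'' honest: the linear span $N$ of all morphisms between distinct objects is a two-sided ideal (a composite of such morphisms cannot return to its source, again by antisymmetry), it is nilpotent because a nonzero product climbs a strictly increasing chain in the finite poset $(\mathcal{D}^0,\leq_{\mathcal{D}})$, and $\mathbb{C}\mathcal{D}/N\cong\bigoplus_a\mathbb{C}G_a$ is semisimple, so $N=\Rad\mathbb{C}\mathcal{D}$; since $Ae\cong Af$ iff $Ae/\Rad Ae\cong Af/\Rad Af$ (a fact recalled in the preliminaries), equivalence is detected in $\bigoplus_a\mathbb{C}G_a$, where the claim is obvious. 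Either patch completes the proof; as stated, the decisive step does not.
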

\begin{enumerate}
\item The set 
\[
\bigcup_{a\in D^{0}}E_{a}
\]
 is a complete set of primitive orthogonal idempotents for $\mathbb{C}\mathcal{D}$.
\item Two primitive idempotents $e_{i}^{a}$ and $e_{j}^{b}$ are equivalent
in $\mathbb{C}\mathcal{D}$ if and only if $a=b$ and they are equivalent
primitive idempotents of $\mathbb{C}G_{a}$.
\end{enumerate}
Let $a\in\mathcal{D}^{0}$ and denote by $I_{a}$ a set of indices
of all the primitive idempotents $E_{a}$ up to equivalence. Without
loss of generality we assume $I_{a}\cap I_{b}=\varnothing$ if $a\neq b$.
By \lemref{EquivalenceOfPrimitiveIdempotents} it is clear that 
\[
I=\bigcup_{a\in D^{0}}I_{a}
\]
contains indices of all the primitive idempotents ${\displaystyle \bigcup_{a\in D^{0}}}E_{a}$
up to equivalence. Later on we will have a natural set of indices
for the case we will be interested in so it will be very convenient
to work that way.

For every $i\in I_{a}$ we denote by $S^{i}=\mathbb{C}G_{a}e_{i}$
the simple module of $\mathbb{C}G_{a}$ corresponding to $e_{i}$.
We denote by $P(i)=\mathbb{C}\mathcal{D}e_{i}=\mathbb{C}\mathcal{D}{\displaystyle \underset{\mathbb{C}G_{a}}{\otimes}}\mathbb{C}G_{a}e_{i}$
the indecomposable projective corresponding to $e_{i}$ and by 
\[
S(i)=P(i)/\Rad P(i)
\]
the simple module of\emph{ $\mathbb{C}\mathcal{D}$ }corresponding
to $e_{i}$. By \lemref{EquivalenceOfPrimitiveIdempotents}, $\{P(i)\}_{i\in I}$
and $\{S(i)\}_{i\in I}$ are complete lists (up to isomorphism) of
the indecomposable projective and simple modules of $\mathbb{C}\mathcal{D}$.
Given $i,j\in I$ we want to know how many times $S(j)$ appears as
a Jordan-Hölder factor in $P(i)$.

For this we will need a basic fact about dual modules of groups. Recall
that for any $A$-module $M$, the dual module $D(M)=\Hom_{\mathbb{C}}(M,\mathbb{C})$
is a right $A$-module (or a left $A^{\op}$-module) defined by 
\[
(\varphi\cdot a)(v)=\varphi(av)
\]
for every $\varphi\in\Hom_{\mathbb{C}}(M,\mathbb{C})$, $a\in A$
and $v\in M$. It is well known that if $G$ is a group and $e$ is
a primitive idempotent then $e\mathbb{C}G\cong D(\mathbb{C}Ge)$ as
right $G$-modules (it is not difficult to check that they have the
same character). 

Now take $G,H$ to be two groups and let $\{e_{i}\}$ and $\{f_{i}\}$
be two complete sets of primitive orthogonal idempotents of $\mathbb{C}G$
and $\mathbb{C}H$ respectively. Let $M$ be a $\mathbb{C}G-\mathbb{C}H$
bimodule (or equivalently, a left $\mathbb{C}G\otimes(\mathbb{C}H)^{\op}$-module).
It is well-known that $e_{i}\otimes f_{j}$ is a primitive idempotent
of $\mathbb{C}G\otimes(\mathbb{C}H)^{\op}$ and the corresponding
simple module is $\mathbb{C}Ge_{i}{\displaystyle \otimes}f_{j}\mathbb{C}H\cong\mathbb{C}Ge_{i}{\displaystyle \otimes}D(\mathbb{C}Hf_{j})$.

The next step is to observe that the set $\mathbb{C}\mathcal{D}(a,b)$
of all morphisms from $a$ to $b$, has the structure of a $\mathbb{C}G_{b}-\mathbb{C}G_{a}$-bimodule
according to:
\[
(g_{1},g_{2})\cdot m=g_{1}mg_{2}.
\]
 Now we can give a description of the Cartan matrix in terms of the
representation theory of the automorphism groups.
\begin{prop}
\label{prop:CartanMatrixEICatFirstForm}Let $\mathcal{D}$ be a finite
and skeletal EI-category. Let $I={\displaystyle \bigcup_{a\in D^{0}}}I_{a}$
be a set of indices for the primitive idempotents up to equivalence
as described above. Take $i\in I_{a}$ and $j\in I_{b}$. The number
of times that $S(j)$ appears as a Jordan-Hölder factor in $P(i)$
is the number of times that $S^{j}\otimes D(S^{i})$ appears as an
irreducible constituent in the $G_{b}-G_{a}$-bimodule $\mathbb{C}\mathcal{D}(a,b)$.
\end{prop}
\begin{proof}
The number of times that $S(j)$ appears as a Jordan-Hölder factor
in $P(i)$, i.e., the $(j,i)$ entry of the Cartan matrix of $\mathbb{C}\mathcal{D}$
equals the dimension 
\[
\dim e_{j}\mathbb{C}\mathcal{D}e_{i}.
\]
Given $m\in\mathcal{D}^{1}$, it is clear that $e_{j}me_{i}=0$ unless
$m\in\mathcal{D}(a,b)$ hence we have 
\[
\dim e_{j}\mathbb{C}\mathcal{D}e_{i}.=\dim e_{j}\mathbb{C}\mathcal{D}(a,b)e_{i}
\]
and this is precisely the number of times that the simple bimodule
corresponding to $e_{j}\otimes e_{i}$, which is $\mathbb{C}G_{b}e_{j}{\displaystyle \otimes}e_{i}\mathbb{C}G_{a}\cong S^{j}\otimes D(S^{i})$,
appears in $\mathbb{C}\mathcal{D}(a,b)$ as a $\mathbb{C}G_{b}-\mathbb{C}G_{a}$-bimodule.
This finishes the proof.
\end{proof}
We will make another step in order to avoid explicit use of bimodules
(or opposite groups). If $M$ is a right $G$-module, we can also
regard $M$ as a left $G$-module with new action $\ast$ defined
by 
\[
g\ast m=m\cdot g^{-1}
\]
(where on the right hand side we use the right $G$-module action). 

This gives as an isomorphism between the category of right $G$-modules
(or left $G^{\op}$-modules) and the category of $G$-modules. This
is quite intuitive but a more accurate explanation can be given. We
can define a functor $\psi$ from the category of left $G^{\op}$-modules
to the category of $G$-modules in the following way. Consider the
function $\alpha_{G}:G\to G^{\op}$ which is defined by $\alpha_{G}(g)=g^{-1}$
(this is the usual natural isomorphism between $\id$ and $\op$ as
functors from the category of groups to itself). Note that if we think
of $G$ and $G^{\op}$ as one-object groupoids then $\alpha_{G}$
is actually a functor. A $G^{\op}$-module is just a functor $F$
from the group $G^{\op}$ viewed as a category to the category of
$\mathbb{C}$-vector spaces $\VS_{\mathbb{C}}$. $\psi$ is defined
on objects by $\psi(F)=F\circ\alpha_{G}$ and it is the identity function
on morphism (i.e. on the module homomorphism). It is not difficult
to check that $\psi$ is an isomorphism of categories. For example,
$\psi(D(M))$ is a \emph{left} module whose underlying set is again
$\Hom_{\mathbb{C}}(M,\mathbb{C})$ but now the action is

\[
(a\cdot\varphi)(v)=\varphi(a^{-1}v)
\]
for every $\varphi\in\Hom_{\mathbb{C}}(M,\mathbb{C})$, $a\in\mathbb{C}G$
and $v\in M$. We prefer do denote this module by $M^{\ast}$ rather
than $D(M)$.

In a similar way, any $\mathbb{C}(G\times H^{\op})$-module (or a
$\mathbb{C}G-\mathbb{C}H$ bi-module) can be regarded as an $G\times H$-module.
So we now think of $\mathbb{C}\mathcal{D}(a,b)$ as a $G_{b}\times G_{a}$-module
with the action
\[
(g_{1},g_{2})\cdot m=g_{1}mg_{2}^{-1}.
\]
\propref{CartanMatrixEICatFirstForm} can now be restated as follows.
\begin{prop}
\label{prop:CartanMatrixEICat}Let $\mathcal{D}$ be a finite and
skeletal EI-category. Let $I={\displaystyle \bigcup_{a\in D^{0}}}I_{a}$
be a set of indices to the primitive idempotents up to equivalence
as described above. Take $i\in I_{a}$ and $j\in I_{b}$. The number
of times that $S(j)$ appears as a Jordan-Hölder factor in $P(i)$
is the number of times that $S^{j}\otimes(S^{i})^{\ast}$ appears
as an irreducible constituent in the $G_{b}\times G_{a}$-module $\mathbb{C}\mathcal{D}(a,b)$.
\end{prop}
We remark that this description is very similar to descriptions of
the Cartan matrix that can be found in \cite[Definition 2.6]{Thiery2012Cartan}
and \cite[Corollary 7.28]{Steinberg2016b}.

\section{\label{sec:RepresentationTheoryOfPTnAndEn}Representation theory
of $\PT_{n}$ and $\E_{n}$}

Let $\PT_{n}$ denote the monoid of all partial functions on the set
$\{1,\ldots,n\}$. Also, denote by $\E_{n}$ the category defined
in the following way. The objects of $\E_{n}$ are the subsets of
$\{1,\ldots,n\}$ and for every two subsets $X$ and $Y$ the hom-set
$\E_{n}(X,Y)$ consists of all onto (total) functions with domain
$X$ and range $Y$. The following fact is proved in \cite[Proposition 3.2]{Stein2016}.
\begin{prop}
There is an isomorphism of algebras $\mathbb{C}\PT_{n}\simeq\mathbb{C}\E_{n}$.
\end{prop}
Therefore, from a representation theoretic point of view, $\PT_{n}$
and $\E_{n}$ has precisely the same properties. In particular, they
have the same global dimension. As mentioned above, computing this
global dimension is the goal of this paper. In this section we will
apply the results of the previous section for the case of $\E_{n}$.
Moreover, we will present some results on $\mathbb{C}\E_{n}$ that
were obtained in \cite{Stein2016}.

Given an object $X$ of $\E_{n}$, the hom-set $\E_{n}(X,X)$ consists
of all onto functions from $X$ to itself. So it is clear that the
endomorphism monoid $\E_{n}(X,X)$ is isomorphic to the group $S_{X}$
of all permutations of $X$. So $\E_{n}$ is an EI-category. Clearly,
two objects $X$ and $Y$ are isomorphic if and only if $|X|=|Y|$
so $\E_{n}$ is not skeletal. We will denote the skeleton of $\E_{n}$
by $\SE_{n}$. We can think of it as the category with object set
$\{0,\ldots,n\}$ such that the hom-set $\SE_{n}(r,k)$ contains all
the onto (total) functions from $\{1,\ldots,r\}$ to $\{1,\ldots,k\}$.
As mentioned in the previous section $\mathbb{C}\E_{n}$ is Morita
equivalent to $\mathbb{C}\SE_{n}$ so they have the same global dimension.
From now on we will concentrate in finding the global dimension of
$\mathbb{C}\SE_{n}$. The automorphism groups of $\SE_{n}$ are $S_{k}$
where $0\leq k\leq n$, It is well known that irreducible representations
of $S_{k}$ are parameterized by integer partitions of $k$, or equivalently,
by Young diagrams with $k$ boxes. So representations of $\SE_{n}$
are parameterized by Young diagrams with $k$ boxes where $0\leq k\leq n$.
Given such Young diagram $\alpha\vdash k$ we denote by $S^{\alpha}$
the Specht module corresponding to $\alpha$, which is an irreducible
representation of $\SE_{n}(k,k)\simeq S_{k}$. We denote by $S(\alpha)$
and $P(\alpha$) the simple and projective modules of $\mathbb{C}\SE_{n}$
corresponding to $\alpha$.

Recall that $S^{\alpha}\simeq(S^{\alpha})^{\ast}$ for every $\alpha\vdash k$
since they have the same character. Therefore, by \propref{CartanMatrixEICat}
we obtain:
\begin{cor}
\label{cor:CartanMatrixOfSE_n}Let $\alpha\vdash r$ and $\beta\vdash k$
be two Young diagrams. The number of times that $S(\beta)$ appears
as a Jordan-Hölder factor in $P(\alpha)$ is the number of times that
$S^{\beta}\otimes S^{\alpha}$ appears as an irreducible constituent
in the $S_{k}\times S_{r}$ module $\mathbb{C}\SE_{n}(r,k)$.
\end{cor}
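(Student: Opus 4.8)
The statement to prove is \corref{CartanMatrixOfSE_n}, which is essentially a direct specialization of \propref{CartanMatrixEICat} to the EI-category $\SE_n$, combined with the self-duality of Specht modules. The plan is to unwind the general description of the Cartan matrix of an EI-category algebra for the specific skeletal category $\SE_n$ and then simplify the dual-module factor using properties of the symmetric group. Since all the heavy lifting has already been done in \secref{CartanMatrixOfEICategoryAlgebra}, the proof should be short: it amounts to a careful bookkeeping of the identifications.

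First I would recall the setup: $\SE_n$ is a finite skeletal EI-category with object set $\{0,\ldots,n\}$ and automorphism groups $G_k = \SE_n(k,k)\simeq S_k$ for $0\leq k\leq n$. The natural index set $I$ for the primitive idempotents up to equivalence is the set of Young diagrams with at most $n$ boxes, where a diagram $\alpha\vdash r$ lies in $I_r$ and indexes the Specht module $S^\alpha$ of $\mathbb{C}S_r$. Thus for $\alpha\vdash r$ and $\beta\vdash k$, \propref{CartanMatrixEICat} says immediately that the $(\beta,\alpha)$ entry of the Cartan matrix of $\mathbb{C}\SE_n$ — i.e., the multiplicity of $S(\beta)$ as a Jordan--H\"older factor of $P(\alpha)$ — equals the multiplicity of $S^\beta\otimes (S^\alpha)^{\ast}$ as an irreducible constituent of the $S_k\times S_r$-module $\mathbb{C}\SE_n(r,k)$, where the bimodule structure on $\mathbb{C}\SE_n(r,k)$ has been converted to a left $S_k\times S_r$-module structure via $(g_1,g_2)\cdot m = g_1 m g_2^{-1}$ as in the discussion preceding \propref{CartanMatrixEICat}.

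The only remaining point is to replace $(S^\alpha)^{\ast}$ by $S^\alpha$. This is exactly the observation recorded just before the statement of the corollary: every irreducible representation of a symmetric group is self-dual, since $S^\alpha$ and its dual $(S^\alpha)^{\ast}$ have the same character (symmetric group characters are real-valued, indeed integer-valued), hence $S^\alpha\simeq (S^\alpha)^{\ast}$ as $\mathbb{C}S_r$-modules. Substituting this isomorphism into the statement of \propref{CartanMatrixEICat} gives that the multiplicity in question is the number of times $S^\beta\otimes S^\alpha$ appears in $\mathbb{C}\SE_n(r,k)$, which is precisely the assertion of the corollary.

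There is no real obstacle here — the corollary is a formal consequence of results already established. The one place to be slightly careful is the ordering of the tensor factors and of the two groups: $\beta$ indexes the target object $k$ (the "$P(i)$-side" corresponds to the domain $\alpha$, the "$S(j)$-side" to $\beta$), so that the relevant hom-space is $\SE_n(r,k)$ (morphisms from the $r$-object to the $k$-object) and the relevant group is $S_k\times S_r$ acting on $\mathbb{C}\SE_n(r,k)$ with $S_k$ acting on the left by post-composition and $S_r$ acting by pre-composition with inverses. As long as one keeps these conventions consistent with \propref{CartanMatrixEICat}, the corollary follows immediately, and the proof can simply be the one-line deduction "combine \propref{CartanMatrixEICat} with $S^\alpha\simeq(S^\alpha)^\ast$."
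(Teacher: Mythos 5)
Your proposal is correct and follows exactly the paper's own (implicit) argument: the corollary is stated as an immediate consequence of Proposition \ref{prop:CartanMatrixEICat} together with the observation, recorded just before the statement, that $S^{\alpha}\simeq(S^{\alpha})^{\ast}$ because symmetric group characters are real. Your bookkeeping of the index sets, the tensor factors, and the $S_k\times S_r$-action on $\mathbb{C}\SE_n(r,k)$ matches the paper's conventions, so nothing is missing.
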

\corref{CartanMatrixOfSE_n} gives a description of the Cartan matrix
of $\mathbb{C}\SE_{n}$ in terms of representations of the symmetric
group. On the other hand, given two Young diagrams $\alpha$ and $\beta$,
it is still very difficult, in general, to give an explicit combinatorial
description of the $(\beta,\alpha)$ entry of the Cartan matrix of
$\mathbb{C}\SE_{n}$. However, several observations are possible.
It is clear that the rows and columns of the Cartan matrix can be
indexed by Young diagram with $k$ boxes where $0\leq k\leq n$. We
will order them such that diagram with $r$ boxes appear before diagram
with $k$ boxes where $r<k$. Therefore we can think of the Cartan
matrix as a $(n+1)\times(n+1)$ block matrix where the $(i,j)$ block
contains pairs $(\beta,\alpha)$ of permutations such that $\beta\vdash i-1$
and $\alpha\vdash j-1$.
\begin{lem}
\label{lem:CartanMatrixIsUnitriangular}With ordering as just described,
the Cartan matrix of $\SE_{n}$ is block upper unitriangular.
\end{lem}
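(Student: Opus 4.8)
The plan is to show two things: first that the diagonal blocks of the Cartan matrix are identity matrices (unipotence), and second that the block in position $(i,j)$ vanishes whenever $i > j$ (upper triangularity). Both follow directly from \corref{CartanMatrixOfSE_n} together with elementary facts about the hom-sets of $\SE_n$. For the diagonal blocks, fix $k$ and consider $\alpha, \beta \vdash k$. By \corref{CartanMatrixOfSE_n} the $(\beta,\alpha)$ entry counts the multiplicity of $S^\beta \otimes S^\alpha$ in the $S_k \times S_k$-module $\mathbb{C}\SE_n(k,k)$. But $\SE_n(k,k) = S_k$ as a set (every onto self-map of a finite set is a bijection), and as an $S_k \times S_k$-module $\mathbb{C}S_k$ with the two-sided action is the regular representation of $S_k \times S_k$ restricted along the map $(g,h) \mapsto gh^{-1}$ — equivalently $\bigoplus_{\gamma \vdash k} S^\gamma \otimes (S^\gamma)^\ast \cong \bigoplus_{\gamma\vdash k} S^\gamma\otimes S^\gamma$ by the standard decomposition of the group algebra as a bimodule. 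Hence $S^\beta \otimes S^\alpha$ occurs exactly once if $\alpha = \beta$ and not at all otherwise, so the diagonal block is the identity.

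For the off-diagonal vanishing, take $\alpha \vdash r$ and $\beta \vdash k$ with $r < k$, i.e. a block below the diagonal in the ordering just fixed (the row index $i-1 = k$ exceeds the column index $j-1 = r$). Then $\SE_n(r,k)$ is the set of \emph{onto} functions from an $r$-element set to a $k$-element set, which is empty when $r < k$: a surjection cannot decrease cardinality. Therefore $\mathbb{C}\SE_n(r,k) = 0$, and by \corref{CartanMatrixOfSE_n} every entry of that block is $0$. Combining the two observations, the Cartan matrix is block upper triangular with identity diagonal blocks, i.e. block upper unitriangular.

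There is essentially no obstacle here; the only point requiring a word of care is the identification of $\mathbb{C}S_k$, regarded as a $\mathbb{C}S_k$--$\mathbb{C}S_k$-bimodule via left and right multiplication, with $\bigoplus_\gamma S^\gamma \otimes (S^\gamma)^\ast$ as an $S_k \times S_k$-module under the action $(g,h)\cdot m = gmh^{-1}$ of \propref{CartanMatrixEICat}. This is the Peter--Weyl / Artin--Wedderburn decomposition of the group algebra and is entirely standard, so I would simply cite it. Everything else is the trivial remark that surjections between finite sets cannot strictly decrease cardinality, and are bijections when the cardinalities agree.
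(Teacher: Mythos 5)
Your proof is correct and follows essentially the same route as the paper: the below-diagonal blocks vanish because $\SE_n(r,k)=\varnothing$ for $r<k$, and the diagonal blocks are identities by the semisimplicity of $\mathbb{C}S_k$. The only (immaterial) difference is that for the diagonal blocks the paper computes $\dim e_{\beta}\mathbb{C}S_{r}e_{\alpha}$ directly from the fact that $e\mathbb{C}Ge\simeq\mathbb{C}$ for a primitive idempotent $e$, whereas you invoke the Wedderburn bimodule decomposition $\mathbb{C}S_k\cong\bigoplus_{\gamma}S^{\gamma}\otimes(S^{\gamma})^{\ast}$ together with \corref{CartanMatrixOfSE_n} --- two standard formulations of the same fact.
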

\begin{proof}
Let $\alpha\vdash r$ and $\beta\vdash k$ be two Young diagrams where
$r<k$. The hom-set $\SE_{n}(r,k)$ is empty so the by \corref{CartanMatrixOfSE_n}
it is clear that the $(\beta,\alpha)$ entry of the Cartan matrix
is $0$. So the elements below the diagonal are $0$. Now, regarding
a $(\beta,\alpha)$ entry where $\alpha\vdash r$, $\beta\vdash r$.
Denote by $e_{\alpha}$, $e_{\beta}$ two primitive idempotents corresponding
to the simple modules $S(\alpha)$,$S(\beta)$ respectively. We have
already seen that $e_{\alpha}$ and $e_{\beta}$ are also primitive
idempotents of $\mathbb{C}\SE_{n}(r,r)\simeq\mathbb{C}S_{r}$ corresponding
to the Specht modules $S^{\alpha}$,$S^{\beta}$ respectively. Therefore
the $(\beta,\alpha)$ entry of the Cartan matrix equals 
\begin{align*}
\dim e_{\beta}\mathbb{C}\SE_{n}e_{\alpha} & =\dim e_{\beta}\mathbb{C}\SE_{n}(r,r)e_{\alpha}\\
 & =\dim e_{\beta}S_{r}e_{\alpha}=\begin{cases}
\dim\mathbb{C}=1 & \text{if }\text{\ensuremath{\alpha}=\ensuremath{\beta}}\\
0 & \text{if }\alpha\neq\beta.
\end{cases}
\end{align*}
so the Cartan matrix is unitriangular as required.
\end{proof}
The $(\beta,\alpha)$ entry of the Cartan matrix where $\alpha\vdash(k+1)$
and $\beta\vdash k$ was found in \cite{Stein2016}. The following
proposition is a corollary of \cite[Theorem 3.4, Lemma 3.6 and Theorem 3.8]{Stein2016}.
\begin{prop}
\label{prop:CartanMatrixLength1}Let $\alpha\vdash(k+1)$ and $\beta\vdash k$
be two Young diagrams. The $(\beta,\alpha)$ entry of the Cartan matrix
of $\mathbb{C}\SE_{n}$ is the number of different ways that $\alpha$
can be constructed from $\beta$ by removing one box and then adding
two boxes but not in the same column.
\end{prop}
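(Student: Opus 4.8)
The idea is to compute the $(\beta,\alpha)$ entry directly from \corref{CartanMatrixOfSE_n}: it equals the number of times $S^{\beta}\otimes S^{\alpha}$ occurs in the $S_{k}\times S_{k+1}$-module $\mathbb{C}\SE_{n}(k+1,k)$, where $(g_{1},g_{2})\cdot m=g_{1}mg_{2}^{-1}$. So the first step is to identify that module. A surjection $f\colon\{1,\ldots,k+1\}\to\{1,\ldots,k\}$ has, by pigeonhole, exactly one fibre of size $2$ and all other fibres singletons, and from this it follows easily that $S_{k}\times S_{k+1}$ acts transitively on the basis $\SE_{n}(k+1,k)$. Hence $\mathbb{C}\SE_{n}(k+1,k)\cong\Ind_{K}^{S_{k}\times S_{k+1}}\tr_{K}$ for $K$ the stabiliser of a conveniently chosen surjection $f_{0}$, say $f_{0}(i)=i$ for $i\leq k$ and $f_{0}(k+1)=k$. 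I would then carry out a short direct computation of $K$: one finds $K\cong S_{k-1}\times S_{2}$, embedded in $S_{k}\times S_{k+1}$ so that the $S_{k-1}$ factor acts \emph{diagonally} (as permutations of $\{1,\ldots,k-1\}$ in both the codomain and the domain coordinate), while the $S_{2}$ factor permutes $\{k,k+1\}$ in the domain only and acts trivially on the codomain coordinate.

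Next I would invoke Frobenius reciprocity, turning the wanted multiplicity into $\langle\tr_{K},\Res_{K}(S^{\beta}\otimes S^{\alpha})\rangle$, and compute this restriction in two stages. Restricting $S^{\beta}$ from $S_{k}$ down to $S_{k-1}$ gives, by the ordinary branching rule, the multiplicity-free sum $\bigoplus_{\mu}S^{\mu}$ over all $\mu$ obtained from $\beta$ by deleting one box. Restricting $S^{\alpha}$ from $S_{k+1}$ down to $S_{k-1}\times S_{2}$ gives $\bigoplus_{\gamma,\delta}c_{\gamma,\delta}^{\alpha}\,S^{\gamma}\otimes S^{\delta}$, by \thmref{Littlewood-Richardson rule} together with the adjunction between induction and restriction. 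Because the $S_{k-1}$ factor of $K$ acts on $S^{\beta}$ and on $S^{\alpha}$ simultaneously, assembling the two decompositions exhibits $\Res_{K}(S^{\beta}\otimes S^{\alpha})$ as a direct sum of $S_{k-1}\times S_{2}$-modules of the form $(S^{\mu}\otimes_{\mathbb{C}}S^{\gamma})\otimes S^{\delta}$, where $S^{\mu}\otimes_{\mathbb{C}}S^{\gamma}$ carries the diagonal (Kronecker) $S_{k-1}$-action and $S^{\delta}$ is the $S_{2}$-factor.

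Finally, pairing against $\tr_{K}=\tr_{S_{k-1}}\otimes\tr_{S_{2}}$ should collapse the sum: the $S_{2}$-contribution $\langle\tr_{S_{2}},S^{\delta}\rangle$ forces $\delta=[2]$, and the $S_{k-1}$-contribution $\langle\tr_{S_{k-1}},S^{\mu}\otimes_{\mathbb{C}}S^{\gamma}\rangle$ equals $\langle S^{\mu},S^{\gamma}\rangle$ (the characters of symmetric groups being real-valued), which forces $\gamma=\mu$. What remains is $\sum_{\mu}c_{\mu,[2]}^{\alpha}$, the sum taken over those $\mu$ obtained from $\beta$ by removing one box; and by Pieri's rule (\propref{Pieri'sRule}) $c_{\mu,[2]}^{\alpha}$ is $1$ when $\alpha$ is obtained from $\mu$ by adding two boxes no two of which lie in the same column, and $0$ otherwise. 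This is exactly the number of ways to build $\alpha$ from $\beta$ by first removing one box and then adding two boxes in distinct columns, which is the claim.

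The only genuinely delicate point I anticipate is the precise shape of the stabiliser $K$, and in particular the fact that its $S_{k-1}$ factor is embedded \emph{diagonally}: this is what causes a Kronecker product of $S_{k-1}$-representations (rather than an outer tensor product) to appear, and it is where the bookkeeping must be done carefully. Once $K$ is correctly pinned down, the remaining steps are a routine application of the symmetric-group branching rules already recorded in the preliminaries.
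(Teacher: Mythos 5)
Your argument is correct and complete: the pigeonhole observation, the transitivity of the $S_{k}\times S_{k+1}$-action on $\SE_{n}(k+1,k)$, the identification of the stabiliser as $\{(\rho,\rho\tau)\mid\rho\in S_{k-1},\tau\in S_{2}\}\simeq S_{k-1}\times S_{2}$ with the $S_{k-1}$ factor embedded diagonally, and the double application of Frobenius reciprocity followed by the branching rule and Pieri's rule all check out (including the point that $\langle\tr_{S_{k-1}},S^{\mu}\otimes_{\mathbb{C}}S^{\gamma}\rangle=\delta_{\mu\gamma}$ because symmetric-group characters are real, and that each intermediate diagram $\mu$ determines uniquely both the removed box and the added pair, so $\sum_{\mu}c_{\mu,[2]}^{\alpha}$ really is the stated count). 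The comparison with the paper is slightly unusual here: the paper does not prove this proposition in the text at all, but imports it from the author's earlier work \cite{Stein2016}, where it arises from the computation of $\dim\Ext^{1}$ between simple modules, i.e.\ from the quiver. What you have written is instead the degree-one analogue of the paper's own Section 5 computation of the \emph{second} superdiagonal: there the action on $\SE_{n}(k+2,k)$ has two orbits with stabilisers $S_{k-1}\times S_{3}$ and $S_{k-2}\times D_{4}$, while in your case there is a single orbit with stabiliser $S_{k-1}\times S_{2}$, which is why everything collapses to Pieri's rule with no dihedral-group complications. Your route has the advantage of being self-contained and of deriving the first superdiagonal purely from Corollary \ref{cor:CartanMatrixOfSE_n}, making Section 5 a natural continuation; the paper's citation buys the extra information that these Cartan entries coincide with the arrow multiplicities of the quiver, which your computation does not by itself establish. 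The only (trivial) case your stabiliser computation does not literally cover is $k=0$, where $\SE_{n}(1,0)=\varnothing$ and both sides of the statement are zero.
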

In the next section we will give a description of the $(\beta,\alpha)$
entry where $\alpha\vdash k+2$ and $\beta\vdash k$ but first we
would like to mention another fact about $\mathbb{C}\SE_{n}$. \propref{CartanMatrixLength1}
is actually also a combinatorial description for the quiver of $\mathbb{C}\SE_{n}$.
Since we will need this quiver for some observations we will state
this result. 
\begin{thm}
\cite[Theorem 3.8]{Stein2016}\label{thm:DescriptionOfTheQuiverOfPT_n}
The vertices in the quiver of $\mathbb{C}\SE_{n}$ are in \textcolor{black}{one-to-one
correspondence with} Young diagrams with $k$ boxes where $0\leq k\leq n$.
If $\alpha\vdash r$, \textup{$\beta\vdash k$} are two Young diagrams
such that $r\neq k+1$, then there are no arrows from $\alpha$ to
$\beta$. If $r=k+1$, then there are arrows from $\alpha$ to $\beta$
if we can construct $\alpha$ from $\beta$ by removing one box and
then adding two boxes but not in the same column. The number of arrows
is the number of different ways that this construction can be carried
out.
\end{thm}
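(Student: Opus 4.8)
The plan is to read the quiver off the numbers $\dim\Ext^{1}(S(\alpha),S(\beta))$, which by definition count the arrows $\alpha\to\beta$. The identification of the vertex set with Young diagrams having at most $n$ boxes is immediate: by \lemref{EquivalenceOfPrimitiveIdempotents} the simple $\mathbb{C}\SE_{n}$-modules are indexed by the irreducible modules of the automorphism groups $S_{0},\dots,S_{n}$, i.e.\ by Specht modules $S^{\gamma}$ with $\gamma\vdash k$, $0\le k\le n$. For the arrows I would first \emph{localise} the problem. Let $N$ be the span in $\mathbb{C}\SE_{n}$ of the non-invertible morphisms (the surjections $\{1,\dots,r\}\to\{1,\dots,k\}$ with $k<r$). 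A composite of $j$ such morphisms passes through $j+1$ strictly decreasing sizes, all in $\{0,\dots,n\}$, so $N$ is nilpotent; and $\mathbb{C}\SE_{n}/N\cong\prod_{k=0}^{n}\mathbb{C}S_{k}$ is semisimple, whence $N=\Rad\mathbb{C}\SE_{n}$. For $\alpha\vdash r$, the projective $P(\alpha)=\bigoplus_{k=0}^{r}\mathbb{C}\SE_{n}(r,k)\otimes_{\mathbb{C}S_{r}}S^{\alpha}$ is graded by the target size, with degree-$r$ part $\cong S^{\alpha}=S(\alpha)$. Since morphisms act without raising degree and $N$ lowers it by at least one, $\Rad P(\alpha)=NP(\alpha)$ lies in degrees $\le r-1$, and $\Rad^{2}P(\alpha)=N^{2}P(\alpha)$ in degrees $\le r-2$; conversely every surjection $\{1,\dots,r\}\to\{1,\dots,k\}$ with $k\le r-2$ factors through an $(r-1)$-element set as a composite of two non-invertible morphisms, so $\Rad^{2}P(\alpha)$ is \emph{exactly} the part of degrees $\le r-2$. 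Hence $\Rad P(\alpha)/\Rad^{2}P(\alpha)$ is concentrated in degree $r-1$.

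Next I would invoke the standard identity $\dim\Ext^{1}(S(\alpha),S(\beta))=[\,\Rad P(\alpha)/\Rad^{2}P(\alpha):S(\beta)\,]$ for $\alpha\ne\beta$, obtained by applying $\Hom(-,S(\beta))$ to the projective cover $0\to\Rad P(\alpha)\to P(\alpha)\to S(\alpha)\to0$ (using $\Hom(P(\alpha),S(\beta))=0$ and that $\Hom(\Rad P(\alpha),S(\beta))=\Hom(\Rad P(\alpha)/\Rad^{2}P(\alpha),S(\beta))$). As each $S(\gamma)$ is supported on the single object $|\gamma|$, this dimension is $0$ unless $\beta\vdash r-1$, which already yields the first two assertions of the theorem. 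When $\beta\vdash r-1$, the grading gives $[\Rad^{2}P(\alpha):S(\beta)]=0$, so $\dim\Ext^{1}(S(\alpha),S(\beta))=[\Rad P(\alpha):S(\beta)]=[P(\alpha):S(\beta)]$, the $(\beta,\alpha)$ entry of the Cartan matrix.

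It then remains to evaluate this Cartan entry, which is precisely \propref{CartanMatrixLength1}; alternatively one computes it directly from \corref{CartanMatrixOfSE_n}. The group $S_{r-1}\times S_{r}$ acts transitively on $\SE_{n}(r,r-1)$ (such a surjection is determined by its unique two-element fibre and a labelling of the fibres), so $\mathbb{C}\SE_{n}(r,r-1)\cong\Ind_{H}^{S_{r-1}\times S_{r}}\tr_{H}$, where $H$ is the stabiliser of a fixed surjection; a short check identifies $H\cong S_{r-2}\times S_{2}$, with $S_{r-2}$ embedded diagonally (acting on the $r-2$ singleton fibres of both source and target) and $S_{2}$ embedded in $S_{r}$ only, as the transposition of the merged pair. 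By Frobenius reciprocity the multiplicity of $S^{\beta}\otimes S^{\alpha}$ in $\mathbb{C}\SE_{n}(r,r-1)$ equals the dimension of the $H$-invariants of $\Res_{H}(S^{\beta}\otimes S^{\alpha})=\bigl(\Res_{S_{r-2}}S^{\beta}\boxtimes\tr_{S_{2}}\bigr)\otimes\Res_{S_{r-2}\times S_{2}}S^{\alpha}$. Expanding $\Res_{S_{r-2}\times S_{2}}S^{\alpha}$ by the Littlewood--Richardson rule (\thmref{Littlewood-Richardson rule}), only the summand whose $S_{2}$-part is $\tr_{S_{2}}$ survives, and the remaining pairing forces its $S_{r-2}$-part $S^{\mu}$ to occur in $\Res_{S_{r-2}}S^{\beta}$. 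Two applications of Pieri's rule (\propref{Pieri'sRule}) then reduce the count to $\sum_{\mu}c^{\alpha}_{\mu,[2]}$, summed over the partitions $\mu$ obtained from $\beta$ by deleting a box, where $c^{\alpha}_{\mu,[2]}$ is $1$ exactly when $\alpha$ is obtained from $\mu$ by adding two boxes in distinct columns. This is exactly the number of ways of constructing $\alpha$ from $\beta$ by removing one box and then adding two boxes not in the same column.

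I expect the structural half -- that all arrows go exactly one step down the size grading -- to be routine once $\Rad\mathbb{C}\SE_{n}$ is identified with $N$. The delicate step is the last one: pinning down the point stabiliser $H$ precisely, in particular getting the small factor $S_{2}$ and its trivial action on the target correct, so that the double restriction collapses -- after the two applications of Pieri's rule -- to the single Littlewood--Richardson coefficient $c^{\alpha}_{\mu,[2]}$ and hence to the clean ``remove one box, add two in distinct columns'' description.
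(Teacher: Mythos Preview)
Your proposal is correct. Note, however, that the paper does not actually prove \thmref{DescriptionOfTheQuiverOfPT_n}: it is quoted verbatim from \cite[Theorem~3.8]{Stein2016}, and the closely related \propref{CartanMatrixLength1} is likewise only cited. So there is no in-paper proof to compare against; you have supplied a self-contained argument where the paper defers to an external reference.

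That said, your route is very much in the spirit of the present paper and almost certainly parallels what \cite{Stein2016} does. The identification $\Rad\mathbb{C}\SE_{n}=N$ via nilpotence of $N$ and semisimplicity of $\mathbb{C}\SE_{n}/N\cong\prod_{k}\mathbb{C}S_{k}$ is the standard EI-category argument; your grading observation that $\Rad P(\alpha)/\Rad^{2}P(\alpha)$ is concentrated in degree $r-1$ (because every surjection dropping size by at least two factors through an intermediate set) is exactly what forces all arrows to go one step down. Your reduction of $\dim\Ext^{1}(S(\alpha),S(\beta))$ to the Cartan entry $[P(\alpha):S(\beta)]$ for $\beta\vdash r-1$ is clean, and your computation of that entry is precisely the method the paper itself uses one level deeper in \secref{TheSecondBlock}: analyse the $S_{k}\times S_{r}$-orbit structure on $\SE_{n}(r,k)$, identify the point stabiliser, and unwind via Frobenius reciprocity and Pieri's rule. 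Here there is a single orbit and the stabiliser $H=\{(\rho,\rho\tau):\rho\in S_{r-2},\,\tau\in S_{2}\}$ is exactly as you describe, so the calculation collapses to $\sum_{\mu}c^{\alpha}_{\mu,[2]}$ with $\mu$ ranging over one-box deletions of $\beta$, which is the asserted combinatorial rule.

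One minor remark: your restriction ``for $\alpha\neq\beta$'' in the $\Ext^{1}$ identity is unnecessary (the formula $\dim\Ext^{1}(S(\alpha),S(\beta))=[\Rad P(\alpha)/\Rad^{2}P(\alpha):S(\beta)]$ holds for all pairs, since the map $\Hom(S(\alpha),S(\beta))\to\Hom(P(\alpha),S(\beta))$ is always an isomorphism), but it does no harm since the grading argument already rules out loops.
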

\begin{example}
\label{exa:QuiverOfPT4}A full drawing of the quiver of $\mathbb{C}\PT_{4}$
is given in the following figure: 

\ytableausetup{centertableaux}\ytableausetup{smalltableaux}\begin{center}\begin{tikzpicture}\path (-1,5) node (S4_1) {$\ydiagram{4}$};\path (1,5) node (S4_2) {$\ydiagram{3,1}$};\path (3,5) node (S4_3) {$\ydiagram{2,2}$};\path (5,5) node (S4_4) {$\ydiagram{2,1,1}$};\path (7,5) node (S4_5) {$\ydiagram{1,1,1,1}$}; \path (1,3) node (S3_1) {$\ydiagram{3}$}; \path (3,3) node (S3_2) {$\ydiagram{2,1}$}; \path (5,3) node (S3_3) {$\ydiagram{1,1,1}$}; \path (2,1.5) node (S2_1) {$\ydiagram{2}$}; \path (4,1.5) node (S2_2) {$\ydiagram{1,1}$}; \path (3,0) node (S1_1) {$\ydiagram{1}$}; \path (3,-1.5) node (S0_1) {$\varnothing$};\draw[thick,->] (S4_1)--(S3_1) ;\draw[thick,->] (S4_2)--(S3_1) ;\draw[thick,->] (S4_3)--(S3_1) ; \draw[thick,->] (S4_1)--(S3_2) ;\draw[thick,->>] (S4_2)--(S3_2) ;\draw[thick,->] (S4_3)--(S3_2) ;\draw[thick,->] (S4_4)--(S3_2) ; \draw[thick,->] (S4_2)--(S3_3) ; \draw[thick,->] (S4_4)--(S3_3) ; \draw[thick,->] (S3_1)--(S2_1) ; \draw[thick,->] (S3_2)--(S2_1) ; \draw[thick,->] (S3_1)--(S2_2) ; \draw[thick,->] (S3_2)--(S2_2) ; \draw[thick,->] (S2_1)--(S1_1) ;\end{tikzpicture}\end{center}
\end{example}

\section{\label{sec:TheSecondBlock}The second block superdiagonal of the
Cartan matrix}

In this section we will give an explicit description for the second
block superdiagonal of the Cartan matrix of $\mathbb{C}\SE_{n}$.
In other words, given $\alpha\vdash(k+2)$ and $\beta\vdash k$ we
will give a combinatorial interpretation for the number of times that
$S(\beta)$ appears as a Jordan-Hölder factor of $P(\alpha)$. \corref{CartanMatrixOfSE_n}
implies that we will have to understand better the action of $S_{k}\times S_{k+2}$
on $\mathbb{C}\SE_{n}(k+2,k)$.
\begin{defn}
Let $\theta$ be an equivalence relation on some finite set $X$.
The \emph{integer partition of $\theta$} is the integer partition
whose elements are the sizes of the equivalence classes of $\theta$.
We denote this integer partition by $I(\theta)$.

Let $f:X\to Y$ be a function. Recall that the kernel $\ker f$ is
the equivalence relation on $X$ defined by $a_{1}$ is equivalent
to $a_{2}$ if $f(a_{1})=f(a_{2})$. Consider some function $f\in\mathbb{C}\SE_{n}(k+2,k)$
where $k\geq2$. Since the kernel $\ker f$ partitions $\{1,\ldots,k+2\}$
into $k$ classes, the integer partition corresponding to $\ker f$
can be either $[3,1^{k-1}]$ or $[2^{2},1^{k-2}]$ and no other option
is possible. We claim that these two options give precisely the orbits
of our action.
\end{defn}
\begin{lem}
\label{lem:OrbitsOfActionSecondSubdiagonal}Let $k\geq2$. The sets
\[
O_{1}=\{f\in\mathbb{C}\SE_{n}(k+2,k)\mid I(\ker f)=\ensuremath{[3,1^{k-1}]}\}
\]
\[
O_{2}=\{f\in\mathbb{C}\SE_{n}(k+2,k)\mid I(\ker f)=\ensuremath{\ensuremath{[2^{2},1^{k-2}]}}\}
\]
form precisely the orbits of $\SE_{n}(k+2,k)$ under the action of
$S_{k}\times S_{k+2}$ described above. 
\end{lem}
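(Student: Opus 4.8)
The plan is to show two things: first that $S_k \times S_{k+2}$ maps each of $O_1$ and $O_2$ into itself, and second that it acts transitively on each. The first part is essentially a formal observation. For $(g_1,g_2) \in S_k \times S_{k+2}$ and $f \in \SE_n(k+2,k)$, the action is $(g_1,g_2)\cdot f = g_1 f g_2^{-1}$. Precomposing $f$ with the permutation $g_2^{-1}$ of $\{1,\ldots,k+2\}$ only relabels the domain, so $\ker(g_1 f g_2^{-1})$ is the image of $\ker f$ under $g_2^{-1}$; postcomposing with the permutation $g_1$ of $\{1,\ldots,k\}$ does not change the kernel at all (it is still a surjection, and $g_1 f g_2^{-1}(a) = g_1 f g_2^{-1}(b)$ iff $f g_2^{-1}(a) = f g_2^{-1}(b)$). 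Hence $I(\ker((g_1,g_2)\cdot f)) = I(\ker f)$, so $O_1$ and $O_2$ are unions of orbits. Since every $f \in \SE_n(k+2,k)$ has kernel partition either $[3,1^{k-1}]$ or $[2^2,1^{k-2}]$ (as noted in the definition preceding the lemma), we have $\SE_n(k+2,k) = O_1 \sqcup O_2$, so it remains only to prove transitivity on each piece.

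For transitivity, fix $f, f' \in O_i$. I would construct the required pair $(g_1, g_2)$ directly. Both $\ker f$ and $\ker f'$ are partitions of $\{1,\ldots,k+2\}$ into $k$ blocks with the same multiset of block sizes (namely $[3,1^{k-1}]$ if $i=1$, or $[2^2,1^{k-2}]$ if $i=2$). Choose a bijection between the blocks of $\ker f$ and the blocks of $\ker f'$ that matches blocks of equal size, and for each pair of matched blocks choose a bijection between them; together these data define a permutation $g_2 \in S_{k+2}$ with the property that $g_2$ carries the partition $\ker f$ onto the partition $\ker f'$ — more precisely $\ker(f \circ g_2^{-1}) = \ker f'$. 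Now $f \circ g_2^{-1}$ and $f'$ are two surjections $\{1,\ldots,k+2\} \to \{1,\ldots,k\}$ with the same kernel, so they induce the same partition of the domain into fibers; consequently there is a unique permutation $g_1 \in S_k$ of the codomain with $g_1 \circ (f \circ g_2^{-1}) = f'$, i.e. $g_1 f g_2^{-1} = f'$. This exhibits $f'$ in the orbit of $f$, completing the argument.

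I do not expect a genuine obstacle here; the only point requiring a little care is bookkeeping the correspondence between kernel classes and the two reindexing permutations, and making sure the action convention $g_1 f g_2^{-1}$ is used consistently (this is the convention fixed just before \propref{CartanMatrixEICat}). One could phrase the transitivity slightly more slickly by noting that the orbit of $f$ under the $S_{k+2}$-action (acting on the domain) is determined exactly by the kernel partition up to relabelling, which is a standard fact about surjections, and that the $S_k$-action then kills the remaining ambiguity in the labelling of the codomain; but the explicit construction above is already short enough to write out in full.
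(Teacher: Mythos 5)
Your proof is correct and follows essentially the same route as the paper's: a direct orbit computation using the fact that precomposition by $g_2^{-1}$ relabels the kernel while postcomposition by $g_1$ leaves it unchanged. The only cosmetic difference is that the paper fixes explicit representatives $\kappa_1,\kappa_2$ and maps everything onto them (checking disjointness of the orbits only for those representatives), whereas you prove transitivity between arbitrary pairs and establish the invariance of $I(\ker f)$ under the whole action, which handles disjointness slightly more cleanly.
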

\begin{proof}
As mentioned above it is clear that $\SE_{n}(k+2,k)=O_{1}\cup O_{2}$.
We want to prove that these are indeed orbits. Define two functions
\[
\kappa_{1},\kappa_{2}:\{1,\ldots,k+2\}\to\{1,\ldots,k\}
\]
 by
\[
\kappa_{1}(i)=\begin{cases}
i & i\leq k\\
k & i\in\{k+1,k+2\}
\end{cases}
\]
and
\[
\kappa_{2}(i)=\begin{cases}
i & i\leq k-1\\
k-1 & i=k\\
k & i\in\{k+1,k+2\}
\end{cases}.
\]
Clearly, $\kappa_{1}\in O_{1}$ and $\kappa_{2}\in O_{2}$. Now take
some other $f\in O_{1}$ and denote by $j_{1},j_{2},j_{3}$ the three
elements such that $f(j_{1})=f(j_{2})=f(j_{3})$. We can take any
$\pi\in S_{k+2}$ which satisfies
\[
\pi^{-1}(j_{1})=k,\quad\pi^{-1}(j_{2})=k+1,\quad\pi^{-1}(j_{3})=k+2
\]
and define $\sigma\in S_{k}$ to be the restriction of $f\pi$ to
$\{1,\ldots,k\}$. It is now easy to check that $f=\sigma\kappa_{1}\pi^{-1}$
so $f$ is in the same orbit as $\kappa_{1}$. Next, take some $g\in O_{2}$
and denote by $\{j_{1},j_{2}\}$ and $\{j_{3},j_{4}\}$ two (distinct)
sets such that $g(j_{1})=g(j_{2})$ and $g(j_{3})=g(j_{4})$. We can
take $\pi\in S_{k+2}$ to be any permutation that satisfies 
\begin{align*}
\pi^{-1}(j_{1}) & =k-1,\quad\pi^{-1}(j_{2})=k\\
\pi^{-1}(j_{3}) & =k+1,\quad\pi^{-1}(j_{4})=k+2
\end{align*}
and define $\sigma\in S_{k}$ by 
\[
\sigma(i)=\begin{cases}
g\pi(i) & i\neq k\\
g(j_{3}) & i=k
\end{cases}.
\]
Again, it is easy to see that $g=\sigma\kappa_{2}\pi^{-1}$ so $g$
is in the same orbit as $\kappa_{2}$. It is only left to show that
$\kappa_{1}$ and $\kappa_{2}$ are not in the same orbit. Indeed,
for every $\pi\in S_{k+2}$ and $\sigma\in S_{k}$ we have that the
elements $\pi(k),\pi(k+1),\pi(k+2)$ are in the same class of the
kernel of $\sigma\kappa_{1}\pi^{-1}$ so the corresponding partition
of $\sigma\kappa_{1}\pi^{-1}$ is also $[3,1^{k-1}]$ hence $\sigma\kappa_{1}\pi^{-1}\neq\kappa_{2}$.
This finishes the proof.
\end{proof}
Now we know that if $k\geq2$ the $S_{k}\times S_{k+2}$-module $\mathbb{C}\SE_{n}(k+2,k)$
decomposes into the direct sum of $\mathbb{C}O_{1}$ and $\mathbb{C}O_{2}$.
We will compute the multiplicity of $S^{\beta}\otimes S^{\alpha}$
as an irreducible constituent in $\mathbb{C}O_{1}$ and in $\mathbb{C}O_{2}$
separately. Since $\mathbb{C}O_{1}$ ($\mathbb{C}O_{2}$) is a permutation
representation of a transitive $S_{k}\times S_{k+2}$ action, $\mathbb{C}O_{1}$
(respectively, $\mathbb{C}O_{2}$) is isomorphic to $\Ind_{K}^{S_{k}\times S_{k+2}}\tr_{K}$
where $K$ is the stabilizer of some $f\in O_{1}$ (respectively,
$f\in O_{2}$). We start by investigating the action on $O_{1}$.
We will continue to use $\kappa_{1}$ and $\kappa_{2}$ that were
defined in \lemref{OrbitsOfActionSecondSubdiagonal}.
\begin{lem}
Consider in the usual way $S_{k-1}$ and $S_{k-1}\times S_{3}$ as
subgroups of $S_{k}$ and $S_{k+2}$ respectively. The stabilizer
of $\kappa_{1}\in O_{1}$ is 
\[
K_{1}=\{(\rho,\rho\tau)\mid\rho\in S_{k-1},\tau\in S_{3}\}\simeq S_{k-1}\times S_{3}.
\]
\end{lem}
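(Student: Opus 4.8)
The plan is to unwind the definition of the action and translate ``$(\sigma,\pi)$ fixes $\kappa_1$'' into explicit conditions on $\sigma$ and $\pi$. By definition, $(\sigma,\pi)\in S_k\times S_{k+2}$ stabilizes $\kappa_1$ exactly when $\sigma\kappa_1\pi^{-1}=\kappa_1$, i.e.\ when
\[
\sigma\kappa_1=\kappa_1\pi
\]
as functions $\{1,\ldots,k+2\}\to\{1,\ldots,k\}$ (recall that in this paper functions are composed right to left). I will first extract the constraint on $\pi$, then the constraint on $\sigma$.

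For $\pi$: since $\sigma$ is a bijection, $\ker(\sigma\kappa_1)=\ker\kappa_1$, whereas $\ker(\kappa_1\pi)=\pi^{-1}(\ker\kappa_1)$. Hence the identity above forces $\pi$ to preserve the partition of $\{1,\ldots,k+2\}$ underlying $\ker\kappa_1$, which consists of the single block $\{k,k+1,k+2\}=\kappa_1^{-1}(k)$ together with the singletons $\{1\},\ldots,\{k-1\}$. Therefore $\pi$ must permute $\{1,\ldots,k-1\}$ among themselves and fix $\{k,k+1,k+2\}$ setwise; that is, $\pi=\rho'\tau$ with $\rho'\in S_{k-1}$ and $\tau\in S_3$ (in the standard embeddings, and commuting since they act on disjoint sets).

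Next I substitute $\pi=\rho'\tau$ and compute both sides. Since $\tau$ permutes the fiber $\kappa_1^{-1}(k)$ and fixes everything else, $\kappa_1\tau=\kappa_1$; as $\rho'$ and $\tau$ commute this gives $\kappa_1\pi=\kappa_1\rho'$. Comparing $\sigma\kappa_1$ with $\kappa_1\rho'$ on $\{1,\ldots,k-1\}$ yields $\sigma(i)=\rho'(i)$ there, and comparing on $\{k,k+1,k+2\}$ yields $\sigma(k)=k$; conversely any $\sigma$ meeting these conditions satisfies $\sigma\kappa_1=\kappa_1\rho'=\kappa_1\pi$. Thus $\sigma\in S_{k-1}$ and $\sigma=\rho'$, so setting $\rho:=\sigma=\rho'$ we get $(\sigma,\pi)=(\rho,\rho\tau)$, and therefore $K_1=\{(\rho,\rho\tau)\mid\rho\in S_{k-1},\ \tau\in S_3\}$. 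Finally, to identify $K_1$ with $S_{k-1}\times S_3$ I will check that $(\rho,\tau)\mapsto(\rho,\rho\tau)$ is a group homomorphism $S_{k-1}\times S_3\to S_k\times S_{k+2}$ --- using once more that elements of $S_{k-1}$ commute with elements of $S_3$ in $S_{k+2}$ --- which is clearly injective with image $K_1$.

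I do not anticipate a genuine obstacle: the argument is essentially bookkeeping with composition of functions. The only places demanding care are keeping the right-to-left composition convention straight and distinguishing the two embeddings of $S_{k-1}$ in play --- into $S_k$ as the pointwise stabilizer of $k$, and into $S_{k+2}$ as the permutations of $\{1,\ldots,k-1\}$ --- which is exactly why the same symbol $\rho$ legitimately appears in both coordinates of an element of $K_1$.
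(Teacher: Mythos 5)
Your proof is correct and follows essentially the same route as the paper: both arguments first show that $\pi$ must preserve the kernel partition of $\kappa_1$ (you phrase this via $\ker(\sigma\kappa_1)=\ker\kappa_1=\pi^{-1}(\ker\kappa_1)$, the paper by a direct contradiction), factor $\pi=\rho\tau$ accordingly, and then read off $\sigma=\rho$ from the identity on $\{1,\ldots,k-1\}$ together with $\sigma(k)=k$. Your closing check that $(\rho,\tau)\mapsto(\rho,\rho\tau)$ is an injective homomorphism is a nice explicit justification of the isomorphism $K_1\simeq S_{k-1}\times S_3$, which the paper leaves implicit.
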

\begin{proof}
Assume that $\sigma\kappa_{1}\pi^{-1}=\kappa_{1}$ for some $\sigma\in S_{k}$
and $\pi\in S_{k+2}$. Take some $i<k$. If $\pi^{-1}(i)\in\{k,k+1,k+2\}$
then there exists some $j\in\{1,\ldots,k+2\}$ (not equal to $i$)
such that $\kappa_{1}\pi^{-1}(i)=\kappa_{1}\pi^{-1}(j)$ and hence
\[
\kappa_{1}(i)=\sigma\kappa_{1}\pi^{-1}(i)=\sigma\kappa_{1}\pi^{-1}(j)=\kappa_{1}(j)
\]
which contradicts the definition of $\kappa_{1}$. So $\pi^{-1}$
must permute $\{1,\ldots,k-1\}$ and $\{k,k+1,k+2\}$ separately.
So there are some $\rho\in S_{k-1}$ and $\tau\in S_{3}$ such that
$\pi=\rho\tau$ and hence $\pi^{-1}=\rho^{-1}\tau^{-1}$(we think
of $S_{k-1}\times S_{3}$ as a subgroup of $S_{k+2}$ in the usual
way). Since $\kappa_{1}$ is the identity on $\{1,\ldots,k-1\}$,
it is clear that the restriction of $\sigma$ on $\{1,\ldots,k-1\}$
is $\rho$. This clearly implies that $\sigma(k)=k$ so $\sigma=\rho$
(considered as an element of $S_{k}$ by the usual embedding of $S_{k-1}$
in $S_{k}$). It is also easy to see that for every $\rho\in S_{k-1}$
and $\tau\in S_{3}$ we have that $\rho\kappa_{1}(\rho\tau)^{-1}=\kappa_{1}$.
We conclude that 
\[
K_{1}=\Stab(\kappa_{1})=\{(\rho,\rho\tau)\mid\rho\in S_{k-1},\quad\tau\in S_{3}\}\cong S_{k-1}\times S_{3}
\]
as required.
\end{proof}
\begin{lem}
\label{lem:MultiplicityInO1AsModule}Let $\alpha\vdash(k+2)$, $\beta\vdash k$
and assume $k\geq1$. The multiplicity of $S^{\beta}\otimes S^{\alpha}$
as an irreducible constituent in $\mathbb{C}O_{1}\simeq\Ind_{K_{1}}^{S_{k}\times S_{k+2}}\tr_{K_{1}}$
equals the multiplicity of $S^{\alpha}$ as an irreducible constituent
in the $S_{k+2}$-module 
\[
\Ind_{S_{k-1}\times S_{3}}^{S_{k+2}}(\Res_{S_{k-1}}^{S_{k}}(S^{\beta})\otimes\tr_{3}).
\]
\end{lem}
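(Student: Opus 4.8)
The plan is to compute both multiplicities by Frobenius reciprocity and to observe that each one equals a single character sum over $S_{k-1}\times S_3$.

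First I would pin down the diagonal embedding. The assignment $(\rho,\tau)\mapsto(\rho,\rho\tau)$ is a group isomorphism from $S_{k-1}\times S_3$ onto $K_1$: it is a homomorphism because the copy of $S_{k-1}$ (permuting $\{1,\dots,k-1\}$) and the copy of $S_3$ (permuting $\{k,k+1,k+2\}$) commute inside $S_{k+2}$, and it is visibly bijective. Pulling the $S_k\times S_{k+2}$-module $S^\beta\otimes S^\alpha$ back along this embedding, the $S_{k-1}\times S_3$-module $\Res_{K_1}(S^\beta\otimes S^\alpha)$ has character
\[
(\rho,\tau)\ \longmapsto\ \chi_{S^\beta}(\rho)\cdot\chi_{S^\alpha}(\rho\tau),
\]
where in the second factor $\rho$ is read through $S_{k-1}\hookrightarrow S_{k+2}$ and $\rho\tau$ is multiplied inside $S_{k+2}$.

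Next, since $\mathbb{C}[S_k\times S_{k+2}]$ is semisimple and $S^\beta\otimes S^\alpha$ is irreducible, its multiplicity in $\mathbb{C}O_{1}\simeq\Ind_{K_1}^{S_k\times S_{k+2}}\tr_{K_1}$ equals $\langle\Ind_{K_1}^{S_k\times S_{k+2}}\tr_{K_1},\,S^\beta\otimes S^\alpha\rangle$, which by Frobenius reciprocity equals $\langle\tr_{K_1},\,\Res_{K_1}(S^\beta\otimes S^\alpha)\rangle_{K_1}$; by the previous paragraph, and because all characters involved are real-valued (symmetric groups), this is
\[
\frac{1}{(k-1)!\cdot 6}\ \sum_{\rho\in S_{k-1}}\ \sum_{\tau\in S_3}\ \chi_{S^\beta}(\rho)\,\chi_{S^\alpha}(\rho\tau).
\]
On the other hand, a second application of Frobenius reciprocity gives that the multiplicity of $S^\alpha$ in $\Ind_{S_{k-1}\times S_3}^{S_{k+2}}\bigl(\Res_{S_{k-1}}^{S_k}(S^\beta)\otimes\tr_3\bigr)$ equals $\langle\Res_{S_{k-1}}^{S_k}(S^\beta)\otimes\tr_3,\ \Res_{S_{k-1}\times S_3}^{S_{k+2}}(S^\alpha)\rangle_{S_{k-1}\times S_3}$, and since $\chi_{\Res_{S_{k-1}}^{S_k}(S^\beta)}(\rho)=\chi_{S^\beta}(\rho)$, $\chi_{\tr_3}\equiv 1$, and $\chi_{\Res_{S_{k-1}\times S_3}^{S_{k+2}}(S^\alpha)}(\rho,\tau)=\chi_{S^\alpha}(\rho\tau)$, this is literally the same double sum. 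Comparing the two expressions proves the lemma.

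I do not anticipate a genuine difficulty; the only delicate point is the bookkeeping around the twisted diagonal defining $K_1$ — after identifying $K_1\cong S_{k-1}\times S_3$ one must keep in mind that $\rho$ acts \emph{simultaneously} on $S^\beta$ and, inside $S_{k+2}$, on $S^\alpha$, so that the restricted character is $\chi_{S^\beta}(\rho)\,\chi_{S^\alpha}(\rho\tau)$ and not the naive $\chi_{S^\beta}(\rho)\,\chi_{S^\alpha}(\tau)$; this is exactly what makes it match the right-hand side. The degenerate case $k=1$ (where $S_{k-1}=S_0$ is the trivial group and $\Res_{S_0}^{S_1}(S^\beta)=\tr_{S_0}$) is covered verbatim. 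A more structural alternative would be induction in stages through $S_{k-1}\times(S_{k-1}\times S_3)\le S_k\times S_{k+2}$ together with $\Ind_{\Delta S_{k-1}}^{S_{k-1}\times S_{k-1}}\tr_{\Delta S_{k-1}}\cong\bigoplus_{\lambda\vdash k-1}S^\lambda\otimes S^\lambda$ and the compatibility of induction from a product of subgroups with external tensor products, but the character computation above is the shortest path.
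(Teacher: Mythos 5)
Your proposal is correct and follows essentially the same route as the paper: two applications of Frobenius reciprocity, with the middle step being the observation that the restricted character of $S^{\beta}\otimes S^{\alpha}$ along the twisted diagonal $K_{1}=\{(\rho,\rho\tau)\}$ gives exactly the same sum $\frac{1}{|K_{1}|}\sum_{(\rho,\tau)\in S_{k-1}\times S_{3}}\chi_{S^{\beta}}(\rho)\,\chi_{S^{\alpha}}(\rho\tau)$ as the inner product $\langle\Res_{S_{k-1}\times S_{3}}^{S_{k+2}}S^{\alpha},\,\Res_{S_{k-1}}^{S_{k}}(S^{\beta})\otimes\tr_{3}\rangle$. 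Your explicit handling of the $k=1$ degenerate case is a small bonus not spelled out in the paper.
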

\begin{proof}
The multiplicity of $S^{\beta}\otimes S^{\alpha}$ in $\mathbb{C}O_{1}$
can be expressed by the inner product of characters: 
\[
\langle S^{\beta}\otimes S^{\alpha},\Ind_{K_{1}}^{S_{k}\times S_{k+2}}\tr_{K_{1}}\rangle
\]

(recall that in order to simplify notation, we use the same notation
for the representation and its character). Using Frobenius reciprocity,
we can see that 
\begin{align*}
\langle S^{\beta}\otimes S^{\alpha},\Ind_{K_{1}}^{S_{k}\times S_{k+2}}\tr_{K_{1}}\rangle & =\langle\Res_{K_{1}}^{S_{k}\times S_{k+2}}(S^{\beta}\otimes S^{\alpha}),\tr_{K_{1}}\rangle\\
 & =\frac{1}{|K_{1}|}\sum_{(\rho,\rho\tau)\in K_{1}}S^{\beta}\otimes S^{\alpha}((\rho,\rho\tau))\\
 & =\frac{1}{|K_{1}|}\sum_{(\rho,\tau)\in S_{k-1}\times S_{3}}S^{\beta}(\rho)S^{\alpha}(\rho\tau).
\end{align*}

This equals 
\begin{align*}
\frac{1}{|K_{1}|}\sum_{(\rho,\tau)\in S_{k-1}\times S_{3}}S^{\beta}(\rho)S^{\alpha}(\rho\tau) & =\frac{1}{|K_{1}|}\sum_{(\rho,\tau)\in S_{k-1}\times S_{3}}S^{\alpha}(\rho\tau)S^{\beta}(\rho)\tr_{3}(\tau)\\
 & =\langle\Res_{K_{1}}^{S_{k+2}}S^{\alpha},\Res_{S_{k-1}}^{S_{k}}(S^{\beta})\otimes\tr_{3}\rangle
\end{align*}
where $\tr_{3}$ is the trivial representation of $S_{3}$. Again,
using Frobenius reciprocity this equals 
\[
\langle S^{\alpha},\Ind_{S_{k-1}\times S_{3}}^{S_{k+2}}(\Res_{S_{k-1}}^{S_{k}}(S^{\beta})\otimes\tr_{3})\rangle.
\]
\end{proof}
Using Pieri's rule (\propref{Pieri'sRule}) we obtain the following
corollary.
\begin{cor}
\label{cor:MultiplicityInO1}Let $\alpha\vdash(k+2)$, $\beta\vdash k$
and assume $k\geq1$. The $S_{k}\times S_{k+2}$-module $S^{\alpha}\otimes S^{\beta}$
appears as an irreducible constituent in $\mathbb{C}O_{1}$ if $\beta$
can be obtained from $\alpha$ by removing one box and then adding
three, but no two in the same column. The multiplicity is the number
of different ways that this construction can be carried out.
\end{cor}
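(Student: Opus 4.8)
The starting point is Lemma~\ref{lem:MultiplicityInO1AsModule}, which tells us that the multiplicity we want equals the multiplicity of $S^{\alpha}$ in
$\Ind_{S_{k-1}\times S_{3}}^{S_{k+2}}(\Res_{S_{k-1}}^{S_{k}}(S^{\beta})\otimes\tr_{3})$.
The plan is to compute this in two stages, corresponding to the two ``external'' operations of restricting by one box and then inducing by three boxes. First I would apply the branching rule for restriction from $S_{k}$ to $S_{k-1}$: by the standard (dual Pieri) branching rule, $\Res_{S_{k-1}}^{S_{k}}(S^{\beta})=\bigoplus_{\mu}S^{\mu}$, where $\mu$ ranges over all Young diagrams obtained from $\beta$ by removing a single box. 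So the expression becomes $\bigoplus_{\mu}\Ind_{S_{k-1}\times S_{3}}^{S_{k+2}}(S^{\mu}\otimes\tr_{3})$, a direct sum over the ``remove-one-box'' diagrams $\mu\vdash k-1$.

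The second stage is to apply Pieri's rule (Proposition~\ref{prop:Pieri'sRule}) to each summand: $\Ind_{S_{k-1}\times S_{3}}^{S_{k+2}}(S^{\mu}\otimes\tr_{3})=\bigoplus_{\gamma\in Y^{3}(\mu)}S^{\gamma}$, where $Y^{3}(\mu)$ is the set of diagrams obtained from $\mu$ by adding three boxes, no two in the same column. Combining the two stages, the total multiplicity of $S^{\alpha}$ in the whole expression is the number of diagrams $\mu$ such that $\mu$ is obtained from $\beta$ by deleting one box and $\alpha$ is obtained from $\mu$ by adding three boxes with no two in a common column --- which is exactly the number of ways to ``construct $\beta$ from $\alpha$'' by the reverse procedure described in the statement (remove one box from $\alpha$, then add three with no two in the same column, to reach $\beta$; equivalently read forward from $\beta$). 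Finally, since $\mathbb{C}O_{1}\cong\Ind_{K_{1}}^{S_{k}\times S_{k+2}}\tr_{K_{1}}$ as noted before the lemma, and $S^{\alpha}\otimes S^{\beta}$ appears in it iff this multiplicity is positive, the corollary follows; the ``multiplicity is the number of different ways'' clause is just the count of such $\mu$'s.

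I do not expect a genuine obstacle here: both branching rules used are stated in the excerpt (Pieri's rule explicitly, and the restriction rule is its standard dual, which one may either cite or derive from Pieri via Frobenius reciprocity). The only point requiring a little care is bookkeeping: the same diagram $\alpha$ may be reachable through several different intermediate $\mu$, and each such $\mu$ must be counted once, so one should phrase the final count as a sum over intermediate diagrams rather than claiming an automatic multiplicity-one statement. I would also note that the hypothesis $k\geq 1$ is exactly what is needed for $S_{k-1}$ to make sense and for the $\Res$/$\Ind$ manipulation to be legitimate; the case analysis for $k\geq 2$ versus $k=1$ does not intrude because here we only treat the orbit $O_{1}$, whose stabilizer description $K_1\cong S_{k-1}\times S_3$ was already established.
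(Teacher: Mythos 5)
Your proposal is correct and matches the paper's intended argument exactly: the paper derives this corollary in one line from Lemma \ref{lem:MultiplicityInO1AsModule} by combining the standard branching rule for $\Res_{S_{k-1}}^{S_{k}}$ with Pieri's rule, which is precisely your two-stage computation, including the observation that the multiplicity is the number of intermediate diagrams $\mu\vdash k-1$. (Your parenthetical about reading the construction ``forward from $\beta$'' is the right reading; as literally phrased the statement's direction is a slip, since removing one box from $\alpha\vdash k+2$ and adding three cannot yield $\beta\vdash k$.)
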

\begin{rem}
\label{rem:RemarkOnLevel3}Note that the decomposition to $O_{1}$
and $O_{2}$ given in \lemref{OrbitsOfActionSecondSubdiagonal} holds
only if $k\geq2$. Now, consider the case $k=1$. In this case the
$S_{1}\times S_{3}\simeq S_{3}$ action on $\SE_{n}(3,1)$ is transitive.
Actually, it is isomorphic to $O_{1}$. So this case it is completely
described by \corref{MultiplicityInO1}. In this case it is very easy
to describe the situation. $S_{3}$ has $3$ representations that
correspond to the Young diagrams $[3]$, $[2,1]$ and $[1^{3}]$.
$S_{1}$ has only the trivial representation $[1]$. If we remove
one box from $[1]$ and add three but no two of them in the same column,
we can obtain only $[3]$ and only in one way. So $S([1])$ appears
as a Jordan-Hölder factor of $P([3])$ with multiplicity $1$ and
doesn't appear in $P([2,1])$ and $P([1^{3}])$.
\end{rem}
Now we turn to investigate the decomposition of $\mathbb{C}O_{2}$
(for $k\geq2$) into irreducible modules. The idea is similar to what
we did with $\mathbb{C}O_{1}$ but the details are more complicated.
We start with the following observation.
\begin{rem}
\label{rem:ElementsinTheDihedral}Consider the dihedral group $D_{4}$
as the subgroup of $S_{4}$ with generators $a=(12)$ and $b=(13)(24)$.
Since $D_{4}$ can be presented by 
\[
\langle x,y\mid x^{2}=y^{2}=1,\quad(xy)^{4}=1\rangle
\]
 it is easy to check that the function $\nu:D_{4}\to S_{2}$ defined
by $\nu(a)=\id$ and $\nu(b)=(12)$ is a group homomorphism. Now,
let $\kappa:\{1,2,3,4\}\to\{1,2\}$ be defined by $\kappa(1)=\kappa(2)=1$
and $\kappa(3)=\kappa(4)=2$. Assume $\tau\in S_{4}$ and $\tau^{\prime}\in S_{2}$
are functions such that $\tau^{\prime}\kappa\tau=\kappa$. It is easy
to see that this implies that $\tau\in D_{4}$ and $\tau^{\prime}=\nu(\tau)$.
Note that we can give a ``geometric'' interpretation for $\nu$.
Consider the standard action of $D_{4}$ on a square.

\begin{center}\begin{tikzpicture}\path (-2,0) node (4) {$4$}; \path (0,0) node (2) {$2$}; \path (-2,2) node (1) {$1$}; \path (0,2) node (3) {$3$}; \draw[thick] (1)--(3); \draw[thick] (3)--(2); \draw[thick] (2)--(4); \draw[thick] (4)--(1); \end{tikzpicture}  \end{center}

The kernel of $\nu$ is precisely the set of elements that keep each
opposite pair of corners occupied by the same pair of numbers. In
others words, these are the elements that keep the upper left and
the bottom right corners occupied by $\{1,2\}$ and the other two
corners occupied by $\{3,4\}$. We will abbreviate and say that these
elements are \emph{keeping corners.}
\end{rem}
\begin{lem}
Assume $k\geq2$. We consider $D_{4}$ as a subgroup of $S_{4}$ as
described in \remref{ElementsinTheDihedral}. Therefore, we think
of $S_{k-2}\times D_{4}$ ($S_{k-2}\times S_{2}$) as a subgroup of
$S_{k+2}$ (respectively, $S_{k}$). The stabilizer of $\kappa_{2}\in O_{2}$
is 
\[
K_{2}=\{(\rho\nu(\tau),\rho\tau)\mid\rho\in S_{k-2},\quad\tau\in D_{4}\}\simeq S_{k-2}\times D_{4}.
\]
\end{lem}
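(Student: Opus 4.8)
The plan is to mimic the proof just given for $K_1 = \Stab(\kappa_1)$, but now keeping track of the subtler combinatorial structure coming from the two-block kernel partition of $\kappa_2$. Recall $\kappa_2$ sends $\{k-1,k\}$ to $k-1$ and $\{k+1,k+2\}$ to $k$, and is the identity on $\{1,\dots,k-2\}$, so $I(\ker\kappa_2) = [2^2,1^{k-2}]$ with the two size-$2$ classes being $A = \{k-1,k\}$ and $B = \{k+1,k+2\}$. I will show that a pair $(\sigma,\pi) \in S_k \times S_{k+2}$ satisfies $\sigma\kappa_2\pi^{-1} = \kappa_2$ precisely when $\pi$ lies in the copy of $S_{k-2}\times D_4$ inside $S_{k+2}$ (with $D_4$ acting on $\{k-1,k,k+1,k+2\}$ exactly as in Remark~\ref{rem:ElementsinTheDihedral}, after the obvious relabelling $1,2,3,4 \mapsto k-1,k,k+1,k+2$) and $\sigma = \rho\,\nu(\tau)$ where $\pi = \rho\tau$ with $\rho \in S_{k-2}$, $\tau \in D_4$.

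First I would establish that $\pi^{-1}$ must permute $\{1,\dots,k-2\}$ and $\{k-1,k,k+1,k+2\}$ separately. The argument is identical to the $\kappa_1$ case: if some $i \le k-2$ had $\pi^{-1}(i) \in \{k-1,k,k+1,k+2\}$, then $\pi^{-1}(i)$ shares a kernel-class of $\kappa_2$ with some $j \ne i$, forcing $\kappa_2(i) = \sigma\kappa_2\pi^{-1}(i) = \sigma\kappa_2\pi^{-1}(j) = \kappa_2(j)$, contradicting that $\kappa_2$ is injective on $\{1,\dots,k-2\}$; and conversely no element of $\{k-1,k,k+1,k+2\}$ can map under $\pi^{-1}$ into $\{1,\dots,k-2\}$ by a counting/complement argument. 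Writing $\pi = \rho\tau$ accordingly with $\rho \in S_{k-2}$ and $\tau \in S_4$ (acting on the last four points), the restriction of $\sigma\kappa_2\pi^{-1}$ to $\{1,\dots,k-2\}$ forces the restriction of $\sigma$ there to equal $\rho$. The remaining content is the constraint on the last four points: writing $\kappa$ for the restriction of $\kappa_2$ to $\{k-1,k,k+1,k+2\} \to \{k-1,k\}$ (which is exactly the $\kappa$ of Remark~\ref{rem:ElementsinTheDihedral} up to relabelling) and $\sigma'$ for the restriction of $\sigma$ to $\{k-1,k\}$, we need $\sigma'\kappa\tau = \kappa$. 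By Remark~\ref{rem:ElementsinTheDihedral} this happens if and only if $\tau \in D_4$ and $\sigma' = \nu(\tau)$. Combining the two pieces, $\sigma = \rho\nu(\tau)$, which gives the claimed form of $K_2$; conversely any such pair clearly stabilizes $\kappa_2$, so the containment is an equality. Finally, the map $(\rho,\tau) \mapsto (\rho\nu(\tau),\rho\tau)$ is visibly a bijection $S_{k-2}\times D_4 \to K_2$, and it is a group homomorphism because $\nu$ is (Remark~\ref{rem:ElementsinTheDihedral}) and the $S_{k-2}$ and $D_4$ factors commute in both $S_k$ and $S_{k+2}$; hence $K_2 \simeq S_{k-2}\times D_4$.

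The main obstacle is verifying the four-point assertion "$\sigma'\kappa\tau = \kappa \Leftrightarrow \tau \in D_4,\ \sigma' = \nu(\tau)$", but this has already been isolated and asserted in Remark~\ref{rem:ElementsinTheDihedral}, so I may simply invoke it; the only care needed is the bookkeeping of the relabelling $\{1,2,3,4\}\leftrightarrow\{k-1,k,k+1,k+2\}$ and $\{1,2\}\leftrightarrow\{k-1,k\}$, together with checking that the "keeping corners" subgroup $D_4$ sits inside $S_{k+2}$ in the same way that the problem statement prescribes. Everything else is a routine adaptation of the preceding lemma, so I expect the write-up to be short.
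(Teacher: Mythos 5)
Your proposal is correct and follows essentially the same route as the paper: establish that $\pi^{-1}$ preserves the blocks $\{1,\dots,k-2\}$ and $\{k-1,k,k+1,k+2\}$, identify the restriction of $\sigma$ to the first block with $\rho$, and reduce the four-point constraint to Remark \remref{ElementsinTheDihedral}. The only (immaterial) slip is writing $\sigma'\kappa\tau=\kappa$ where the condition is literally $\sigma'\kappa\tau^{-1}=\kappa$; since $D_4$ is closed under inverses and $\nu(\tau^{-1})=\nu(\tau)$, the conclusion is unchanged.
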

\begin{proof}
Assume that $\sigma\kappa_{2}\pi^{-1}=\kappa_{2}$ for some $\sigma\in S_{k}$
and $\pi\in S_{k+2}$. As

\[
\sigma\kappa_{2}\pi^{-1}(k-1)=\kappa_{2}(k-1)=\kappa_{2}(k)=\sigma\kappa_{2}\pi^{-1}(k)
\]
and since $\pi$ and $\sigma$ are permutations, it is clear that
$\pi^{-1}(k-1)$ and $\pi^{-1}(k)$ are in the same kernel class of
$\kappa_{2}$. This implies that 
\[
\pi^{-1}(k-1),\pi^{-1}(k)\in\{k-1,k,k+1,k+2\}
\]
and likewise 
\[
\pi^{-1}(k+1),\pi^{-1}(k+2)\in\{k-1,k,k+1,k+2\}.
\]
So $\pi^{-1}$ must permute $\{1,\ldots,k-2\}$ and $\{k-1,k,k+1,k+2\}$
separately. So there are some $\rho\in S_{k-2}$ and $\tau\in S_{4}$
such that $\pi=\rho\tau$ hence $\pi^{-1}=\rho^{-1}\tau^{-1}$(we
think of $S_{k-2}\times S_{4}$ as a subgroup of $S_{k+2}$ in the
usual way). Since $\kappa_{2}$ is the identity on $\{1,\ldots,k-2\}$,
it is clear that the restriction of $\sigma$ on $\{1,\ldots,k-2\}$
is $\rho$. Now, denote by $\widetilde{\kappa_{2}}$ and by $\tilde{\sigma}$
the restrictions of $\kappa_{2}$ and $\sigma$ on $\{k-1,k,k+1,k+2\}$
and $\{k-1,k\}$, respectively. Since $\tilde{\sigma}\widetilde{\kappa_{2}}\tau^{-1}=\widetilde{\kappa_{2}}$
we know by \remref{ElementsinTheDihedral} that $\tau\in D_{4}$ (with
the obvious identification between $\{k-1,k,k+1,k+2\}$ and $\{1,2,3,4\}$)
and $\tilde{\sigma}=\nu(\tau^{-1})=(\nu(\tau))^{-1}=\nu(\tau)$ (note
that $\nu(\tau)\in S_{2}$ so it is the inverse of itself). In conclusion
we obtain that 
\[
K_{2}=\Stab(\kappa_{2})=\{(\rho\nu(\tau),\rho\tau)\mid\rho\in S_{k-2},\quad\tau\in D_{4}\}\simeq S_{k-2}\times D_{4}
\]
as required.
\end{proof}
Now we want to find out what is the multiplicity of $S^{\beta}{\displaystyle \otimes S^{\alpha}}$
as an irreducible constituent in the $S_{k}\times S_{k+2}$ module
$\mathbb{C}O_{2}$ which is isomorphic to 
\[
\Ind_{K_{2}}^{S_{k}\times S_{k+2}}\tr_{K_{2}}.
\]
The idea is similar to what we did with $\mathbb{C}O_{1}$ but here
the situation is more complicated. We will have to start with some
more observations.

Let $W$ be an $S_{2}$-representation. We will denote by $\overline{W}$
the inflation of $W$ to a $D_{4}$-representation along the homomorphism
$\nu:D_{4}\to S_{2}$. Likewise, if $W$ is a $G\times S_{2}$-representation
we will denote by $\overline{W}$ its inflation into a $G\times D_{4}$-representation
along the homomorphism $\id_{G}\times\nu$. It is not difficult to
describe explicitly this inflation but we will do so only after the
next lemma.
\begin{lem}
\label{lem:MultiplicityInO2}Let $\alpha\vdash(k+2)$, $\beta\vdash k$
and assume $k\geq2$. The multiplicity of $S^{\beta}\otimes S^{\alpha}$
as an irreducible constituent in $\mathbb{C}O_{2}$ equals the multiplicity
of $S^{\alpha}$ as an irreducible constituent in the $S_{k+2}$-module
\[
\Ind_{S_{k-2}\times D_{4}}^{S_{k+2}}\overline{\Res_{S_{k-2}\times S_{2}}^{S_{k}}S^{\beta}}.
\]
\end{lem}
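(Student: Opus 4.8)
The plan is to mimic exactly the computation done for $\mathbb{C}O_1$ in \lemref{MultiplicityInO1AsModule}, translating the bimodule multiplicity into an inner product of characters, applying Frobenius reciprocity twice, and dealing with the one new feature --- the twisted embedding of $K_2$ inside $S_k\times S_{k+2}$ governed by $\nu$. First I would write the desired multiplicity of $S^\beta\otimes S^\alpha$ in $\mathbb{C}O_2\simeq\Ind_{K_2}^{S_k\times S_{k+2}}\tr_{K_2}$ as
\[
\langle S^\beta\otimes S^\alpha,\Ind_{K_2}^{S_k\times S_{k+2}}\tr_{K_2}\rangle=\langle\Res_{K_2}^{S_k\times S_{k+2}}(S^\beta\otimes S^\alpha),\tr_{K_2}\rangle,
\]
and then expand the right-hand side as an average over $K_2$. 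Using the explicit description $K_2=\{(\rho\nu(\tau),\rho\tau)\mid\rho\in S_{k-2},\ \tau\in D_4\}$, parametrized bijectively by $(\rho,\tau)\in S_{k-2}\times D_4$, this average becomes
\[
\frac{1}{|K_2|}\sum_{(\rho,\tau)\in S_{k-2}\times D_4}S^\beta(\rho\nu(\tau))\,S^\alpha(\rho\tau).
\]

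The key step is to recognize the factor $S^\beta(\rho\nu(\tau))$ as the character of the inflated module. Since $S_{k-2}\times S_2$ sits inside $S_k$ in the usual way, $\Res_{S_{k-2}\times S_2}^{S_k}S^\beta$ is an $S_{k-2}\times S_2$-representation, and its inflation $\overline{\Res_{S_{k-2}\times S_2}^{S_k}S^\beta}$ along $\id\times\nu:S_{k-2}\times D_4\to S_{k-2}\times S_2$ has character exactly $(\rho,\tau)\mapsto \big(\Res_{S_{k-2}\times S_2}^{S_k}S^\beta\big)(\rho,\nu(\tau))=S^\beta(\rho\nu(\tau))$. Hence the sum above is
\[
\frac{1}{|K_2|}\sum_{(\rho,\tau)\in S_{k-2}\times D_4}S^\alpha(\rho\tau)\,\overline{\Res_{S_{k-2}\times S_2}^{S_k}S^\beta}(\rho,\tau)=\big\langle\Res_{S_{k-2}\times D_4}^{S_{k+2}}S^\alpha,\ \overline{\Res_{S_{k-2}\times S_2}^{S_k}S^\beta}\big\rangle,
\]
the inner product being taken in the character ring of $S_{k-2}\times D_4$ (here I use that $|K_2|=|S_{k-2}\times D_4|$ and that $\tau\mapsto\rho\tau$ identifies the group). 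A final application of Frobenius reciprocity converts this restriction-side inner product into
\[
\big\langle S^\alpha,\ \Ind_{S_{k-2}\times D_4}^{S_{k+2}}\overline{\Res_{S_{k-2}\times S_2}^{S_k}S^\beta}\big\rangle,
\]
which is precisely the multiplicity of $S^\alpha$ in $\Ind_{S_{k-2}\times D_4}^{S_{k+2}}\overline{\Res_{S_{k-2}\times S_2}^{S_k}S^\beta}$, as claimed.

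I expect the only real subtlety to be bookkeeping: making sure the twisted subgroup $K_2$ is correctly identified with $S_{k-2}\times D_4$ as an \emph{abstract} group via $(\rho,\tau)\mapsto(\rho\nu(\tau),\rho\tau)$ (this is a group isomorphism precisely because $\nu$ is a homomorphism, which was checked in \remref{ElementsinTheDihedral}), and that under this identification the trivial character of $K_2$ pulls back to the trivial character of $S_{k-2}\times D_4$, so no extra twist appears on the $\tr$ side. One should also be careful that the restriction $\Res_{S_{k-2}\times D_4}^{S_k\times S_{k+2}}(S^\beta\otimes S^\alpha)$ has character $(\rho,\tau)\mapsto S^\beta(\rho\nu(\tau))\,S^\alpha(\rho\tau)$, i.e.\ that the first coordinate of an element of $K_2$ really is $\rho\nu(\tau)\in S_k$ and not something else --- but this is immediate from the description of $K_2$. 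Everything else is the same two-fold Frobenius reciprocity argument already used for $O_1$, so the proof is short. A remark afterwards giving the promised explicit description of the inflation $\overline{W}$ (namely that on the kernel of $\nu$ --- the ``keeping corners'' elements --- $D_4$ acts trivially, while a transposition not keeping corners acts by the nontrivial element of $S_2$) will make the later branching-rule computation transparent.
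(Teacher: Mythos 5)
Your proposal is correct and follows essentially the same route as the paper's proof: expand the multiplicity as a character sum over $K_2$ parametrized by $S_{k-2}\times D_4$, identify $S^{\beta}(\rho\nu(\tau))$ as the character of the inflation $\overline{\Res_{S_{k-2}\times S_{2}}^{S_{k}}S^{\beta}}$, and apply Frobenius reciprocity a second time. The bookkeeping points you flag (the group isomorphism via $\nu$ and the triviality of the pulled-back character) are exactly the ones implicit in the paper's argument.
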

\begin{proof}
By Frobenius reciprocity
\begin{align*}
\langle S^{\beta}\otimes S^{\alpha},\Ind_{K_{2}}^{S_{k}\times S_{k+2}}\tr_{K_{2}}\rangle & =\langle\Res_{K_{2}}^{S_{k}\times S_{k+2}}(S^{\beta}\otimes S^{\alpha}),\tr_{K_{2}}\rangle\\
 & =\frac{1}{|K_{2}|}\sum_{(\rho\nu(\tau),\rho\tau)\in K_{2}}S^{\beta}\otimes S^{\alpha}((\rho\nu(\tau),\rho\tau))\\
 & =\frac{1}{|K_{2}|}\sum_{(\rho,\tau)\in S_{k-2}\times D_{4}}S^{\beta}(\rho\nu(\tau))S^{\alpha}(\rho\tau)
\end{align*}

Again, we want to express this sum as the inner product of two $\mbox{\ensuremath{S_{k-2}\times D_{4}}-representations}$.
First observe that 
\[
S^{\beta}(\rho\nu(\tau))=\overline{\Res_{S_{k-2}\times S_{2}}^{S_{k}}S^{\beta}}(\rho,\tau)
\]
where here we inflate the $S_{k-2}\times S_{2}$-representation 
\[
\Res_{S_{k-2}\times S_{2}}^{S_{k}}S^{\beta}
\]
into a $S_{k-2}\times D_{4}$ representation. Moreover, it is clear
that 
\[
S^{\alpha}(\rho\tau)=\Res_{S_{k-2}\times D_{4}}^{S_{k+2}}S^{\alpha}(\rho\tau).
\]
Therefore, the above expression equals 
\[
\langle\Res_{S_{k-2}\times D_{4}}^{S_{k+2}}S^{\alpha},\overline{\Res_{S_{k-2}\times S_{2}}^{S_{k}}S^{\beta}}\rangle
\]
and by Frobenius reciprocity, this equals
\[
\langle S^{\alpha},\Ind_{S_{k-2}\times D_{4}}^{S_{k+2}}\overline{\Res_{S_{k-2}\times S_{2}}^{S_{k}}S^{\beta}}\rangle
\]
as required.
\end{proof}
By \lemref{MultiplicityInO1AsModule} and \lemref{MultiplicityInO2}
we obtain the following corollary.
\begin{cor}
\label{lem:CombinedO1O2}Let $k\geq2$. The number of times that $S(\beta)$
appears as a Jordan-Hölder factor of $P(\alpha)$, which is the multiplicity
of $S^{\beta}\otimes S^{\alpha}$ as an irreducible constituent in
$\mathbb{C}\SE_{n}(k+2,k)$ equals the multiplicity of $S^{\alpha}$
as an irreducible constituent in the $S_{k+2}$-module
\[
\Ind_{S_{k-1}\times S_{3}}^{S_{k+2}}(\Res_{S_{k-1}}^{S_{k}}(S^{\beta})\otimes\tr_{3})\oplus\Ind_{S_{k-2}\times D_{4}}^{S_{k+2}}\overline{\Res_{S_{k-2}\times S_{2}}^{S_{k}}S^{\beta}}.
\]
\end{cor}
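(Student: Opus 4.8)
The plan is to observe that this corollary is a direct assembly of the three results just established, so the proof is short. First I would invoke \corref{CartanMatrixOfSE_n}: the number of times $S(\beta)$ occurs as a Jordan-H\"older factor of $P(\alpha)$ is exactly the multiplicity of the irreducible $S_k\times S_{k+2}$-module $S^{\beta}\otimes S^{\alpha}$ inside $\mathbb{C}\SE_n(k+2,k)$. This reduces the whole question to decomposing the permutation module $\mathbb{C}\SE_n(k+2,k)$.

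Next I would use \lemref{OrbitsOfActionSecondSubdiagonal}, which tells us that for $k\geq2$ the set $\SE_n(k+2,k)$ is the disjoint union of the two orbits $O_1$ and $O_2$ under the $S_k\times S_{k+2}$-action. Since the linearization of a disjoint union of $G$-sets is the direct sum of the corresponding permutation modules, we get an isomorphism of $S_k\times S_{k+2}$-modules
\[
\mathbb{C}\SE_n(k+2,k)\;\simeq\;\mathbb{C}O_1\;\oplus\;\mathbb{C}O_2 .
\]
Because the multiplicity of a fixed irreducible constituent is additive over direct sums, the multiplicity of $S^{\beta}\otimes S^{\alpha}$ in $\mathbb{C}\SE_n(k+2,k)$ equals its multiplicity in $\mathbb{C}O_1$ plus its multiplicity in $\mathbb{C}O_2$.

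Finally I would substitute the two computations already carried out: \lemref{MultiplicityInO1AsModule} identifies the multiplicity in $\mathbb{C}O_1$ with the multiplicity of $S^{\alpha}$ in $\Ind_{S_{k-1}\times S_{3}}^{S_{k+2}}(\Res_{S_{k-1}}^{S_{k}}(S^{\beta})\otimes\tr_{3})$, and \lemref{MultiplicityInO2} identifies the multiplicity in $\mathbb{C}O_2$ with the multiplicity of $S^{\alpha}$ in $\Ind_{S_{k-2}\times D_{4}}^{S_{k+2}}\overline{\Res_{S_{k-2}\times S_{2}}^{S_{k}}S^{\beta}}$. Adding these and using additivity of multiplicities once more (now in the direction of combining the two induced $S_{k+2}$-modules into one) gives the stated formula. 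There is no real obstacle here; the only points to check are that the hypothesis $k\geq2$ is exactly what \lemref{OrbitsOfActionSecondSubdiagonal} and \lemref{MultiplicityInO2} require (and that \lemref{MultiplicityInO1AsModule}, which only needs $k\geq1$, applies a fortiori), and that multiplicity of a simple module is an additive function on the Grothendieck group, which justifies the two splittings used above.
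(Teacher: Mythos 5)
Your proof is correct and follows exactly the route the paper takes (the paper simply states the corollary follows from Lemma \ref{lem:MultiplicityInO1AsModule} and Lemma \ref{lem:MultiplicityInO2}, leaving implicit the appeal to Corollary \ref{cor:CartanMatrixOfSE_n}, the orbit decomposition of Lemma \ref{lem:OrbitsOfActionSecondSubdiagonal}, and additivity of multiplicities, all of which you spell out). No issues.
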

Now we want to explain how the above multiplicity can, in principle,
be computed. The representation
\[
\Ind_{S_{k-1}\times S_{3}}^{S_{k+2}}(\Res_{S_{k-1}}^{S_{k}}(S^{\beta})\otimes\tr_{3})
\]
can be computed using standard Branching rules and Pieri's rule. However,
it is more difficult to compute
\[
\Ind_{S_{k-2}\times D_{4}}^{S_{k+2}}\overline{\Res_{S_{k-2}\times S_{2}}^{S_{k}}S^{\beta}}.
\]
For this we will have to investigate more carefully the inflation
we are doing. $S_{2}$ has only two representation, the trivial representation
$\tr_{2}$ and the sign representation $\sgn_{2}$. It is obvious
that $\overline{\tr_{2}}=\tr_{D_{4}}$. Now, $\overline{\sgn_{2}}$
is also a one-dimensional representation. By \remref{ElementsinTheDihedral}
we can describe it as a representation that sends the four permutations
that keep corners to $1$ and the other four elements to $-1$. 

Now we want to understand what happens when we induce these representations
to $S_{4}$. In other words, we want to find $\Ind_{D_{4}}^{S_{4}}\tr_{D_{4}}$
and $\Ind_{D_{4}}^{S_{4}}\overline{\sgn_{2}}$.
\begin{lem}
\label{lem:DecompositionOfInductionOfTrD4}The decomposition into
irreducible representations of the $S_{4}$-representation $\Ind_{D_{4}}^{S_{4}}\tr_{D_{4}}$
is: 
\[
\Ind_{D_{4}}^{S_{4}}\tr_{D_{4}}\simeq S^{[4]}\oplus S^{[2,2]}
\]
\end{lem}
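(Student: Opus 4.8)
The plan is to realize $\Ind_{D_{4}}^{S_{4}}\tr_{D_{4}}$ as a concrete permutation representation and then extract its irreducible decomposition by a short character computation. Since $\tr_{D_{4}}$ is the trivial representation, $\Ind_{D_{4}}^{S_{4}}\tr_{D_{4}}$ is the permutation module $\mathbb{C}(S_{4}/D_{4})$ on the left cosets of $D_{4}$, and it has dimension $[S_{4}:D_{4}]=3$. Using the description of $D_{4}$ in \remref{ElementsinTheDihedral} (in particular that the elements $\tau$ with $\tau^{\prime}\kappa\tau=\kappa$ are exactly the elements of $D_{4}$), one sees that $D_{4}$ is precisely the stabilizer in $S_{4}$ of the partition $\{\{1,2\},\{3,4\}\}$ of $\{1,2,3,4\}$ into two pairs. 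Hence $\mathbb{C}(S_{4}/D_{4})$ is isomorphic to the permutation representation of $S_{4}$ on the set of the three pair-partitions $\{12\mid34\}$, $\{13\mid24\}$, $\{14\mid23\}$.

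Next I would compute the character $\chi$ of this permutation representation: for a representative of each conjugacy class of $S_{4}$, $\chi$ equals the number of pair-partitions it fixes. This gives $\chi(e)=3$, $\chi((12))=1$, $\chi((12)(34))=3$, $\chi((123))=0$ (a $3$-cycle permutes the three partitions cyclically), and $\chi((1234))=1$. A direct inner-product calculation then yields $\langle\chi,\chi\rangle=2$ and $\langle\chi,\tr_{S_{4}}\rangle=1$, so $\Ind_{D_{4}}^{S_{4}}\tr_{D_{4}}$ is multiplicity-free with two irreducible constituents, one of which is $S^{[4]}$. The other constituent has dimension $3-1=2$, and since $S^{[2,2]}$ is the unique $2$-dimensional irreducible representation of $S_{4}$, we conclude $\Ind_{D_{4}}^{S_{4}}\tr_{D_{4}}\simeq S^{[4]}\oplus S^{[2,2]}$.

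There is no serious obstacle here; the only point requiring care is the bookkeeping of fixed-point counts, especially checking that a $3$-cycle fixes none of the three pair-partitions while a $4$-cycle fixes exactly one. As an alternative that avoids characters altogether, one notes that the $S_{4}$-action on the three pair-partitions has kernel the Klein four-group $V=\{e,(12)(34),(13)(24),(14)(23)\}$ and induces an isomorphism $S_{4}/V\cong S_{3}$ onto the full symmetric group on the three partitions; thus $\Ind_{D_{4}}^{S_{4}}\tr_{D_{4}}$ is the inflation along $S_{4}\to S_{4}/V\cong S_{3}$ of the natural $3$-point permutation representation of $S_{3}$, which decomposes as the trivial representation plus the $2$-dimensional standard representation, and these inflate to $S^{[4]}$ and $S^{[2,2]}$ respectively.
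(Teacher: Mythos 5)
Your proof is correct; all the fixed-point counts check out ($\chi$ takes values $3,1,3,0,1$ on the five conjugacy classes, giving $\langle\chi,\chi\rangle=2$ and $\langle\chi,\tr_{S_{4}}\rangle=1$), and the identification of $D_{4}$ with the stabilizer of the pair-partition $\{\{1,2\},\{3,4\}\}$ is right. The route differs from the paper's in a worthwhile way. The paper also starts from the coset permutation module and the observation that transitivity forces the trivial constituent to appear once, but it then computes the multiplicity of $S^{[2,2]}$ specifically, via Frobenius reciprocity and the restriction of the character of $S^{[2,2]}$ to $D_{4}$, and closes with a dimension count $3=1+2$. You instead make the coset space concrete as the three pair-partitions, compute the full permutation character by counting fixed points, and use $\langle\chi,\chi\rangle=2$ to conclude the module is multiplicity-free with exactly two constituents; the second constituent is then pinned down as $S^{[2,2]}$ because it is the unique $2$-dimensional irreducible of $S_{4}$. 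Your approach requires knowing the character table of $S_{4}$ only up to the dimensions of its irreducibles, whereas the paper needs the character values of $S^{[2,2]}$ on $D_{4}$; on the other hand the paper's computation is shorter once that character is in hand. Your second argument, via the kernel $V$ of the action and the isomorphism $S_{4}/V\cong S_{3}$, is the cleanest of all: it reduces everything to the decomposition of the natural $3$-point representation of $S_{3}$ and avoids character computations entirely. Either version is a complete proof.
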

\begin{proof}
$\Ind_{D_{4}}^{S_{4}}\tr_{D_{4}}$ is a permutation representation
of the action of $S_{4}$ on the cosets $S_{4}/D_{4}$. Since this
action is transitive, we know that the multiplicity of the trivial
representation $\tr_{4}=S^{[4]}$ is $1$ (see \cite[Corollary B.11]{Steinberg2016b}).
Now, the multiplicity of $S^{[2,2]}$ as an irreducible constituent
of $\Ind_{D_{4}}^{S_{4}}\tr_{D_{4}}$ is 
\[
\langle S^{[2,2]},\Ind_{D_{4}}^{S_{4}}\tr_{D_{4}}\rangle=\langle\Res_{D_{4}}^{S_{4}}\left(S^{[2,2]}\right),\tr_{D_{4}}\rangle
\]
Note that $\Res_{D_{4}}^{S_{4}}\left(S^{[2,2]}\right)$ is just the
restriction of the character of $S^{[2,2]}$ as an $S_{4}$-representation.
This character is given in the following table:

\begin{center}

\begin{tabular}{|c|c|c|c|}
\hline 
$\id$ & $(12),(34)$ & $(12)(34),(13)(24),(14)(23)$ & $(1324),(1423)$\tabularnewline
\hline 
\hline 
$2$ & $0$ & $2$ & $0$\tabularnewline
\hline 
\end{tabular}

\end{center}

It is easy to calculate that 
\[
\langle\Res_{D_{4}}^{S_{4}}\left(S^{[2,2]}\right),\tr_{D_{4}}\rangle=1.
\]
Now, note that
\[
\dim S^{[4]}=1,\quad\dim S^{[2,2]}=2
\]
and 
\[
\dim\Ind_{D_{4}}^{S_{4}}\tr_{D_{4}}=[S_{4}:D_{4}]\cdot\dim\tr_{D_{4}}=3\cdot1=3.
\]
Therefore, by considering the dimensions we must have that 
\[
\Ind_{D_{4}}^{S_{4}}\tr_{D_{4}}\simeq S^{[4]}\oplus S^{[2,2]}
\]

as required.
\end{proof}
\begin{lem}
The decomposition into irreducible representations of the $S_{4}$-representation
$\Ind_{D_{4}}^{S_{4}}\overline{\sgn_{2}}$ is 
\[
\Ind_{D_{4}}^{S_{4}}\overline{\sgn_{2}}\simeq S^{[3,1]}.
\]
\end{lem}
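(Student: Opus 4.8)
The plan is to follow the same two-step strategy used in the proof of \lemref{DecompositionOfInductionOfTrD4}: first pin down the dimension of the induced module, then detect $S^{[3,1]}$ inside it, and conclude by a dimension count. Since $\overline{\sgn_{2}}$ is one-dimensional,
\[
\dim\Ind_{D_{4}}^{S_{4}}\overline{\sgn_{2}}=[S_{4}:D_{4}]\cdot\dim\overline{\sgn_{2}}=3\cdot1=3,
\]
and $\dim S^{[3,1]}=3$ as well, so it suffices to show that $S^{[3,1]}$ occurs as an irreducible constituent of $\Ind_{D_{4}}^{S_{4}}\overline{\sgn_{2}}$: a three-dimensional module containing the three-dimensional irreducible $S^{[3,1]}$ must equal it.

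To detect $S^{[3,1]}$, I would apply Frobenius reciprocity and evaluate the character inner product directly over $D_{4}$:
\[
\langle\Ind_{D_{4}}^{S_{4}}\overline{\sgn_{2}},S^{[3,1]}\rangle=\langle\overline{\sgn_{2}},\Res_{D_{4}}^{S_{4}}S^{[3,1]}\rangle=\frac{1}{|D_{4}|}\sum_{g\in D_{4}}\overline{\sgn_{2}}(g)\,\chi_{S^{[3,1]}}(g).
\]
Two inputs are needed. From \remref{ElementsinTheDihedral} and the description of $\overline{\sgn_{2}}$ recorded just before the lemma, $\overline{\sgn_{2}}$ equals $1$ on the four ``keeping corners'' elements $e,(12),(34),(12)(34)$ of $D_{4}$ and $-1$ on the other four, $(13)(24),(14)(23),(1324),(1423)$. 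And $\chi_{S^{[3,1]}}$, being the character of the standard representation, is ``number of fixed points minus one'': it equals $3$ on $e$, $1$ on a transposition, $-1$ on a product of two disjoint transpositions, and $-1$ on a $4$-cycle. Multiplying and summing over the eight elements of $D_{4}$ gives $3+1+1-1+1+1+1+1=8$, so the inner product is $1$, and the lemma follows.

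The computation is short, and the main obstacle is simply bookkeeping in the second step: one must list the eight elements of $D_{4}=\langle(12),(13)(24)\rangle$ correctly inside $S_{4}$, read off their $S_{4}$-cycle types (to evaluate $\chi_{S^{[3,1]}}$), and note which of them lie in $\ker\nu$ (to evaluate $\overline{\sgn_{2}}$); an arithmetic slip there is the only real risk. A cleaner variant avoids even this: one has $\tr_{D_{4}}\oplus\overline{\sgn_{2}}=\Ind_{\ker\nu}^{D_{4}}\tr_{\ker\nu}$, since $\ker\nu$ is the index-two ``keeping corners'' subgroup $\cong S_{2}\times S_{2}$ and $\overline{\sgn_{2}}$ is precisely the nontrivial one-dimensional character of $D_{4}$ with kernel $\ker\nu$; hence, by transitivity of induction and Pieri's rule (\propref{Pieri'sRule}),
\[
\Ind_{D_{4}}^{S_{4}}\tr_{D_{4}}\oplus\Ind_{D_{4}}^{S_{4}}\overline{\sgn_{2}}=\Ind_{S_{2}\times S_{2}}^{S_{4}}(\tr_{2}\otimes\tr_{2})=S^{[4]}\oplus S^{[3,1]}\oplus S^{[2,2]},
\]
and subtracting \lemref{DecompositionOfInductionOfTrD4} leaves $\Ind_{D_{4}}^{S_{4}}\overline{\sgn_{2}}=S^{[3,1]}$.
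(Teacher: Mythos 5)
Your main argument is correct and is essentially the paper's proof: Frobenius reciprocity reduces the question to the inner product $\langle\Res_{D_{4}}^{S_{4}}S^{[3,1]},\overline{\sgn_{2}}\rangle$ over the eight elements of $D_{4}$, which evaluates to $1$, and a dimension count ($3=3$) then forces equality. Your alternative variant is also valid and arguably slicker: since $\ker\nu$ is the index-two subgroup $S_{2}\times S_{2}$ of $D_{4}$ and $\overline{\sgn_{2}}$ is the nontrivial character of $D_{4}/\ker\nu$, one has $\Ind_{D_{4}}^{S_{4}}\tr_{D_{4}}\oplus\Ind_{D_{4}}^{S_{4}}\overline{\sgn_{2}}\simeq\Ind_{S_{2}\times S_{2}}^{S_{4}}(\tr_{2}\otimes\tr_{2})\simeq S^{[4]}\oplus S^{[3,1]}\oplus S^{[2,2]}$ by Pieri's rule, and subtracting the previous lemma avoids the character bookkeeping entirely.
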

\begin{proof}
The multiplicity of $S^{[3,1]}$ as an irreducible constituent of
$\Ind_{D_{4}}^{S_{4}}\overline{\sgn_{2}}$ is 
\[
\langle S^{[3,1]},\Ind_{D_{4}}^{S_{4}}\overline{\sgn_{2}}\rangle=\langle\Res_{D_{4}}^{S_{4}}\left(S^{[3,1]}\right),\overline{\sgn_{2}}\rangle
\]
Note that $\Res_{D_{4}}^{S_{4}}\left(S^{[3,1]}\right)$ is just the
restriction of the character of $S^{[3,1]}$ as an $S_{4}$-representation.
This character is given in the following table: 

\begin{center}

\begin{tabular}{|c|c|c|c|c|c|c|c|}
\hline 
$\id$ & $(12)$ & $(34)$ & $(12)(34)$ & $(13)(24)$ & $(14)(23)$ & $(1324)$ & $(1423)$\tabularnewline
\hline 
\hline 
$3$ & $1$ & $1$ & $-1$ & $-1$ & $-1$ & $-1$ & $-1$\tabularnewline
\hline 
\end{tabular}

\end{center}

By \remref{ElementsinTheDihedral} it is clear that the character
$\overline{\sgn_{2}}$ is given in the following table:

\begin{center}

\begin{tabular}{|c|c|c|c|c|c|c|c|}
\hline 
$\id$ & $(12)$ & $(34)$ & $(12)(34)$ & $(13)(24)$ & $(14)(23)$ & $(1324)$ & $(1423)$\tabularnewline
\hline 
\hline 
$1$ & $1$ & $1$ & $1$ & $-1$ & $-1$ & $-1$ & $-1$\tabularnewline
\hline 
\end{tabular}

\end{center}

Note that the first four permutations are the keeping corners elements.

Now it is easy to calculate that 
\[
\langle\Res_{D_{4}}^{S_{4}}\left(S^{[3,1]}\right),\overline{\sgn_{2}}\rangle=1.
\]
As before, we can finish with dimension considerations. We have that
\[
\dim\Ind_{D_{4}}^{S_{4}}\overline{\sgn_{2}}=3
\]
and we know that
\[
\dim S^{[3,1]}=3
\]

so we must have

\[
\Ind_{D_{4}}^{S_{4}}\overline{\sgn_{2}}\simeq S^{[3,1]}
\]
as required.
\end{proof}
\begin{rem}
\label{rem:ComputationOfMultInO2}Now we are, in principle, able to
compute the expression 
\[
\Ind_{S_{k-2}\times D_{4}}^{S_{k+2}}\overline{\Res_{S_{k-2}\times S_{2}}^{S_{k}}S^{\beta}}
\]
 of \lemref{MultiplicityInO2}. Assume $\beta\vdash k$ is some Young
diagram with $k$ boxes ($k\geq2$). We can find the irreducible constituents
of $\Res_{S_{k-2}\times S_{2}}^{S_{k}}S^{\beta}$ using the Littlewood-Richardson
rule. It is clear that every such constituent is of the form $S^{\gamma}\otimes\tr_{2}$
or $S^{\gamma}\otimes\sgn_{2}$ where $\gamma\vdash(k-2)$. Now inflating
this into a $S_{k-2}\times D_{4}$ representation, it is clear that
we get $S^{\gamma}\otimes\overline{\tr_{2}}$ or $S^{\gamma}\otimes\overline{\sgn_{2}}$
respectively. Now we need to induct a representation of this form
from $S_{k-2}\times D_{4}$ to $S_{k+2}$. However, by the transitivity
of induction, we know that 
\[
\Ind_{S_{k-2}\times D_{4}}^{S_{k+2}}W=\Ind_{S_{k-2}\times S_{4}}^{S_{k+2}}\Ind_{S_{k-2}\times D_{4}}^{S_{k-2}\times S_{4}}W.
\]
So we can at the first step induct to $S_{k-2}\times S_{4}$ and get
that 
\begin{align*}
\Ind_{S_{k-2}\times D_{4}}^{S_{k-2}\times S_{4}}(S^{\gamma}\otimes\overline{\tr_{2}}) & =S^{\gamma}\otimes(S^{[4]}\oplus S^{[2,2]})\\
\Ind_{S_{k-2}\times D_{4}}^{S_{k-2}\times S_{4}}(S^{\gamma}\otimes\overline{\sgn_{2}}) & =S^{\gamma}\otimes S^{[3,1]}.
\end{align*}
Finally we can induce these $S_{k-2}\times S_{4}$-representations
into a $S_{k+2}$ representation using the Littlewood-Richardson rule.
The explicit description of the Littlewood-Richardson rule in the
above procedure might be non-trivial, so we cannot say that we have
an explicit way to describe the multiplicity of $S^{\alpha}$ in $\Ind_{S_{k-2}\times D_{4}}^{S_{k+2}}\overline{\Res_{S_{k-2}\times S_{2}}^{S_{k}}S^{\beta}}$.
However, in certain cases what we obtained is enough as we are going
to see in the next section.
\end{rem}

\section{\label{sec:TheGlobalDimension}The global dimension}

In this section we will finally prove that the global dimension of
$\SE_{n}$ is $n-1$. According to the description of the quiver given
in \thmref{DescriptionOfTheQuiverOfPT_n}, it is clear that the longest
path in the quiver is of length $n-1$. Therefore it is clear that
\[
\gd\SE_{n}\leq n-1.
\]
For the opposite inequality it is enough to find one $\SE_{n}$-module
$M$ with
\[
\pd(M)=n-1.
\]
In this section we will prove that the $\SE_{n}$-module corresponding
to the Young diagram $[2,1^{n-2}]$ has projective dimension $n-1$.
We start with some notation. For $k\geq2$ we will denote the Young
diagram $[2,1^{k-2}]$ by $\ds_{k}$ (the ``$\ds$'' stands for
``dual standard'' since this module is just the tensor of the standard
representation with the sign representation). In the previous section
we denoted the sign representation of $S_{2}$ by $\sgn_{2}$. In
this section it will be convenient to denote the Young diagram $[1^{k}]$
by $\sgn_{k}$ for $k\geq1$. The major step will be to list all the
Jordan-Hölder factors of $P(\ds_{k})$.
\begin{lem}
\label{lem:FactorsAbove}Let $n\geq k$ and $\alpha\vdash r$ for
$r\geq k$. The module $S(\alpha)$ appears as a Jordan-Hölder factor
of $P(\ds_{k})$ if and only if $r=k$ and $\alpha=\ds_{k}$.
\end{lem}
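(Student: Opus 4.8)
The plan is to deduce the statement directly from the block upper unitriangularity of the Cartan matrix established in \lemref{CartanMatrixIsUnitriangular}, combined with \corref{CartanMatrixOfSE_n}. Recall that the number of times $S(\alpha)$ occurs as a Jordan-H\"older factor of $P(\ds_k)$ is exactly the $(\alpha,\ds_k)$ entry of the Cartan matrix of $\mathbb{C}\SE_n$, so it suffices to analyze this entry under the standing hypothesis that $\alpha\vdash r$ with $r\geq k$ (and $k\geq 2$, $n\geq k$, so that $\ds_k$ indexes a simple $\mathbb{C}\SE_n$-module).

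First I would dispose of the case $r>k$. Here $\alpha$ has strictly more boxes than $\ds_k$, so the $(\alpha,\ds_k)$ entry lies strictly below the diagonal block of the Cartan matrix, and by \lemref{CartanMatrixIsUnitriangular} it is $0$. Equivalently, one may invoke \corref{CartanMatrixOfSE_n} directly: this entry equals the multiplicity of $S^{\alpha}\otimes S^{\ds_k}$ in the $S_k\times S_r$-module $\mathbb{C}\SE_n(k,r)$, and $\SE_n(k,r)$ is empty, since there is no onto function from a $k$-element set to an $r$-element set when $r>k$. Hence $S(\alpha)$ is not a Jordan-H\"older factor of $P(\ds_k)$ in this case.

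It remains to treat $r=k$. Now the $(\alpha,\ds_k)$ entry sits in the diagonal block, which by \lemref{CartanMatrixIsUnitriangular} is the identity matrix; hence it equals $1$ if $\alpha=\ds_k$ and $0$ otherwise. Concretely, via \corref{CartanMatrixOfSE_n} this entry is the multiplicity of $S^{\alpha}\otimes S^{\ds_k}$ in $\mathbb{C}\SE_n(k,k)\cong\mathbb{C}S_k$ regarded as an $S_k\times S_k$-module under the action $(g_1,g_2)\cdot m=g_1 m g_2^{-1}$; since this is the two-sided regular module, $\mathbb{C}S_k\cong\bigoplus_{\gamma\vdash k}S^{\gamma}\otimes(S^{\gamma})^{\ast}$, and Specht modules are self-dual, so the multiplicity is $\delta_{\alpha,\ds_k}$. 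Thus $S(\alpha)$ is a Jordan-H\"older factor of $P(\ds_k)$ (in fact with multiplicity one) precisely when $\alpha=\ds_k$, which together with the previous paragraph yields the claimed equivalence.

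I do not expect any genuine obstacle here: the lemma is a bookkeeping consequence of results already proved, isolating the ``easy'' portion of the Jordan-H\"older series of $P(\ds_k)$, namely the factors living at levels $\geq k$. The role of this lemma is to clear the way so that the subsequent analysis of $P(\ds_k)$ can concentrate on the factors at levels $k-1$ and below, where the concrete description of the second block superdiagonal obtained in \secref{TheSecondBlock} is what is actually needed.
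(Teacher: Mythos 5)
Your argument is correct and is essentially the paper's own: the lemma is deduced directly from the block upper unitriangularity of the Cartan matrix (\lemref{CartanMatrixIsUnitriangular}), which the paper states in a single line. Your additional justification via \corref{CartanMatrixOfSE_n} and the two-sided regular module merely re-derives what that lemma already established.
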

\begin{proof}
Clear from the fact the the Cartan matrix is block unitriangular (\lemref{CartanMatrixIsUnitriangular}).
\end{proof}
\begin{lem}
\label{lem:JHFactorsOneBelow}Let $n\geq k\geq3$ and $\alpha\vdash(k-1)$.
The module $S(\alpha)$ appear as a Jordan-Hölder factor of $P(\ds_{k})$
if and only if $\alpha=\ds_{k-1}$ or $\alpha=\sgn_{k-1}$, each of
them has multiplicity $1$.
\end{lem}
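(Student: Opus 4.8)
The plan is to compute the relevant entry of the Cartan matrix using \corref{CartanMatrixOfSE_n} together with \lemref{CombinedO1O2}, taking $\beta=\ds_k=[2,1^{k-2}]$ and letting $\alpha$ range over partitions of $k-1$. By \corref{CartanMatrixOfSE_n} the multiplicity of $S(\alpha)$ in $P(\ds_k)$ equals the multiplicity of $S^\alpha$ in the $S_{k-1}$-module $\mathbb{C}\SE_n(k,k-1)$, and since $k\geq 3$ means $k-1\geq 2$, \lemref{CombinedO1O2} expresses this as the multiplicity of $S^\alpha$ in
\[
\Ind_{S_{k-2}\times S_{3}}^{S_{k}}\bigl(\Res_{S_{k-2}}^{S_{k-1}}(S^{\ds_{k}})\otimes\tr_{3}\bigr)\oplus\Ind_{S_{k-3}\times D_{4}}^{S_{k}}\overline{\Res_{S_{k-3}\times S_{2}}^{S_{k-1}}S^{\ds_{k}}}
\]
(here the roles of $k$ and $k+2$ in \lemref{CombinedO1O2} are played by $k-1$ and $k+1$; one must be careful that $\alpha\vdash(k-1)$ is the ``lower'' diagram and $\ds_k\vdash k$... wait, actually $\beta$ has $k-1$ boxes, $\alpha$ has... let me restate: we apply \corref{CartanMatrixOfSE_n} with the partition of the larger set being $\ds_k\vdash k$ in the role of ``$P(\alpha)$'' and $\alpha\vdash(k-1)$ in the role of ``$S(\beta)$''). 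Concretely I will set $m=k-1$, so $\ds_k\vdash m+1$ and $\alpha\vdash m$, and invoke \propref{CartanMatrixLength1} rather than \lemref{CombinedO1O2}: since $\ds_k$ has exactly one box more than $\alpha$, this is the \emph{first} superdiagonal block, governed by \propref{CartanMatrixLength1}.

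So the real computation is: for which $\alpha\vdash(k-1)$ can $\ds_k=[2,1^{k-2}]$ be obtained from $\alpha$ by removing one box and then adding two boxes, no two in the same column, and in how many ways? First I would enumerate the partitions $\gamma\vdash(k-2)$ obtained by removing one box from $\ds_k$: these are $[2,1^{k-3}]=\ds_{k-1}$ (remove a box from the first column's bottom) and $[1^{k-1}]$... no, that has $k-1$ boxes; removing a box from the top row of $[2,1^{k-2}]$ gives $[1^{k-1}]$, which has $k-1$ boxes, not $k-2$ — so that removal is illegal since $[2,1^{k-2}]$ has $k$ boxes and $[1^{k-1}]$ has $k-1$. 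Removing one box from $\ds_k$ ($k$ boxes) yields a partition with $k-1$ boxes: the removable corners of $[2,1^{k-2}]$ are the box at position $(1,2)$ and the box at position $(k-1,1)$, giving $[1^{k-1}]=\sgn_{k-1}$ and $[2,1^{k-3}]=\ds_{k-1}$ respectively. Then I must recover from each of these a $\gamma\vdash(k-2)$ by removing a further box and check which $\alpha\vdash(k-1)$ arise by adding two non-column-adjacent boxes back to $\gamma$; equivalently, following \propref{CartanMatrixLength1} directly: run over $\alpha\vdash(k-1)$, remove one box to get some $\gamma\vdash(k-2)$, then add two boxes (no two in the same column) and ask whether the result is $[2,1^{k-2}]$. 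The shape $[2,1^{k-2}]$ is very rigid — it is a hook — so the combinatorics is small: I expect exactly $\alpha=\ds_{k-1}=[2,1^{k-3}]$ and $\alpha=\sgn_{k-1}=[1^{k-1}]$ to work, each in exactly one way, which is precisely the claim. I will verify non-vanishing and the multiplicity-one statement by exhibiting the unique box-removal/box-addition sequence in each case and checking that no other $\alpha$ admits any valid sequence producing the hook.

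The main obstacle is bookkeeping: one must check \emph{all} $\alpha\vdash(k-1)$, not just the two claimed ones, and rule out spurious contributions. The key point making this tractable is that adding two boxes to $\gamma$ with no two in the same column, then landing on the hook $[2,1^{k-2}]$, forces $\gamma$ to be a sub-hook and pins down where the two boxes go; since a hook has only two addable corners relevant here plus the possibility of extending the single long column, the casework is finite and short. I would organize it as: (i) list the $\le 2$ shapes $\gamma\vdash(k-2)$ that can be completed to $[2,1^{k-2}]$ by two legal box-additions, together with the number of such completions; (ii) for each such $\gamma$, list the $\alpha\vdash(k-1)$ with $\gamma\subset\alpha$ (one box added); (iii) collect the total multiplicity for each $\alpha$. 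I expect this to yield exactly the two diagrams $\ds_{k-1}$ and $\sgn_{k-1}$, each with multiplicity $1$, completing the proof. The hypothesis $k\geq 3$ is needed so that $\ds_{k-1}=[2,1^{k-3}]$ and $\sgn_{k-1}=[1^{k-1}]$ are genuinely distinct valid Young diagrams (for $k=3$ they are $[2]$ and $[1,1]$).
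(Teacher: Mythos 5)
Your proposal, after the self-corrected detour through \lemref{CombinedO1O2} (which governs the second superdiagonal, not this one), lands on exactly the paper's argument: apply \propref{CartanMatrixLength1} and observe that the hook $[2,1^{k-2}]$ can be reached by adding two boxes, no two in the same column, only from $[1^{k-2}]=\sgn_{k-2}$, which in turn arises by removing one box from precisely $\ds_{k-1}$ or $\sgn_{k-1}$, each in a unique way. This is correct and essentially identical to the paper's proof.
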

\begin{proof}
Clear by \propref{CartanMatrixLength1}. The only way to obtain $\ds_{k}$
by adding two boxes but not in the same column is from $\sgn_{k-2}$
and $\sgn_{k-2}$ can be obtained by removing one box from $\ds_{k-1}$
or $\sgn_{k-1}$.
\end{proof}
\begin{lem}
\label{lem:NoElementsLength2FromDS}Let $n\geq k\geq3$. The module
$P(\ds_{k})$ has no Jordan-Hölder factors of the form $S(\alpha)$
with $\alpha\vdash(k-2)$.
\end{lem}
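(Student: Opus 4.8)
The claim is the vanishing of the $(\alpha,\ds_k)$ entry of the Cartan matrix of $\mathbb{C}\SE_n$ for every $\alpha\vdash(k-2)$, so the plan is to run the pair $(\alpha,\ds_k)$ through the apparatus of \secref{TheSecondBlock}. First I would dispose of $k=3$: the only $\alpha\vdash 1$ is $[1]$, and \remref{RemarkOnLevel3} already records that $S([1])$ is not a Jordan--H\"older factor of $P([2,1])=P(\ds_3)$. For $k\geq4$ I would invoke Corollary~\ref{lem:CombinedO1O2} with its parameter ``$k$'' taken to be $k-2$ (permissible, since then $k-2\geq2$): the $(\alpha,\ds_k)$ entry equals the multiplicity of $S^{\ds_k}$ as an irreducible constituent of
\[
\Ind_{S_{k-3}\times S_{3}}^{S_{k}}\bigl(\Res_{S_{k-3}}^{S_{k-2}}(S^{\alpha})\otimes\tr_{3}\bigr)\ \oplus\ \Ind_{S_{k-4}\times D_{4}}^{S_{k}}\overline{\Res_{S_{k-4}\times S_{2}}^{S_{k-2}}S^{\alpha}},
\]
and it suffices to show that each of the two summands contains $S^{\ds_k}$ with multiplicity $0$.

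For the first ($O_1$) summand I would quote \corref{MultiplicityInO1} (again with ``$k$'' $=k-2$): the multiplicity there is the number of ways to produce $\ds_k=[2,1^{k-2}]$ from $\alpha$ by deleting one box and then adjoining three boxes, no two of them in the same column. Since the diagram $[2,1^{k-2}]$ occupies only two columns, three boxes can never lie in three distinct columns of it, so this number is $0$. This step is immediate and is not where the difficulty lies.

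The real work is the second ($O_2$) summand. By \remref{ComputationOfMultInO2} (with ``$k$'' $=k-2$), the multiplicity of $S^{\ds_k}$ in it is a non-negative integer combination of Littlewood--Richardson coefficients $c^{\ds_k}_{\gamma,\delta}$ with $\gamma\vdash(k-4)$ and $\delta\in\{[4],[2,2],[3,1]\}$, so I only need that every such coefficient is $0$. The crux is the following observation: $c^{\ds_k}_{\gamma,\delta}=0$ unless $\gamma\subseteq[2,1^{k-2}]$, and if $\gamma\subseteq[2,1^{k-2}]$ then $\gamma_{1}\leq2$, so $\gamma$ fills all of column $1$ in rows $1,\dots,\ell(\gamma)$ with $\ell(\gamma)\leq|\gamma|=k-4$; hence the skew diagram $[2,1^{k-2}]/\gamma$ contains the column-$1$ cells in rows $\ell(\gamma)+1,\dots,k-1$, a vertical strip of length $(k-1)-\ell(\gamma)\geq3$. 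In any semistandard skew tableau the entries along a column strictly increase, so such a tableau uses at least three distinct entries; but a semistandard tableau of content $[4]$, $[2,2]$ or $[3,1]$ uses at most two distinct entries. Therefore there is no semistandard skew tableau of shape $[2,1^{k-2}]/\gamma$ with content $\delta$, a fortiori none whose row word is a lattice permutation, and $c^{\ds_k}_{\gamma,\delta}=0$ by \thmref{Littlewood-Richardson rule}.

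Combining the two vanishing statements with Corollary~\ref{lem:CombinedO1O2} gives that the $(\alpha,\ds_k)$ entry is $0$, which is the assertion. The only delicate point is the $O_2$ summand, and there everything hinges on the ``vertical strip of length at least $3$'' observation together with the trivial remark that $[4],[2,2],[3,1]$ each involve at most two distinct box values; once these are in place the Littlewood--Richardson rule finishes the argument at once. I would also quickly check the edge cases $k=4$ and $k=5$ (where $\gamma$ is $\varnothing$ or $[1]$), so that the inequality $(k-1)-\ell(\gamma)\geq3$ is genuinely being applied, but this is routine.
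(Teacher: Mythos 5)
Your proof is correct and follows essentially the same route as the paper: the $k=3$ case via \remref{RemarkOnLevel3}, the $O_1$ summand killed by Pieri's rule since $[2,1^{k-2}]$ has only two columns, and the $O_2$ summand killed by the Littlewood--Richardson rule because the skew shape $\ds_k/\gamma$ contains a column of at least three boxes while $\delta\in\{[4],[2,2],[3,1]\}$ has at most two rows. Your write-up merely makes explicit (via the bound $\ell(\gamma)\leq k-4$) the paper's terser observation that one must add at least three boxes in a single column.
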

\begin{proof}
First assume $k=3$. From the quiver description (and \exaref{QuiverOfPT4})
it is clear that the only possible candidate is $\alpha=[1],$ the
trivial representation of $S_{1}$. But from \remref{RemarkOnLevel3}
we know that it is not a Jordan-Hölder factor of $P(\ds_{k})=P([2,1])$.
Now assume $k\geq4$. By \lemref{CombinedO1O2} we need to show that
$S^{\ds_{k}}$ is not an irreducible constituent in 
\[
\Ind_{S_{k-3}\times S_{3}}^{S_{k}}(\Res_{S_{k-3}}^{S_{k-2}}(S^{\alpha})\otimes\tr_{3})\oplus\Ind_{S_{k-4}\times D_{4}}^{S_{k}}\overline{\Res_{S_{k-4}\times S_{2}}^{S_{k-2}}S^{\alpha}}.
\]

First consider the module
\[
M_{1}=\Ind_{S_{k-3}\times S_{3}}^{S_{k}}(\Res_{S_{k-3}}^{S_{k-2}}(S^{\alpha})\otimes\tr_{3}).
\]
By Pieri's rule, a necessary condition for $S^{\ds_{k}}$ to be an
irreducible constituent in $M_{1}$ is that $\ds_{k}$ should be obtained
from some other diagram by adding three boxes, no two of them in the
same column (see \corref{MultiplicityInO1}). This is clearly impossible
so $S^{\ds_{k}}$ is not an irreducible constituent in $M_{1}$. Now
consider
\[
M_{2}=\Ind_{S_{k-4}\times D_{4}}^{S_{k}}\overline{\Res_{S_{k-4}\times S_{2}}^{S_{k-2}}S^{\alpha}}.
\]
Clearly, the $S_{k-4}\times S_{2}$-representation 
\[
\Res_{S_{k-4}\times S_{2}}^{S_{k-2}}S^{\alpha}
\]
consists of a direct sum of representations of the form $S^{\beta}\otimes S^{\gamma}$
where $\beta\vdash(k-4)$ and $\gamma\in\{[2],[1,1]\}$.

Now, by \remref{ComputationOfMultInO2} we know that in each $S_{k-2}\times S_{4}$-representation
\[
\Ind_{S_{k-4}\times D_{4}}^{S_{k-4}\times S_{4}}\overline{\Res_{S_{k-4}\times S_{2}}^{S_{k-2}}S^{\alpha}}
\]
will be a direct sum of representations of the form $S^{\beta}\otimes S^{\delta}$
where $\beta\vdash(k-4)$ and $\delta\in\{[4],[2,2],[3,1]\}$. It
is left to show that $\ds_{k}$ does not appear as an irreducible
constituent of 
\[
\Ind_{S_{k-4}\times S_{4}}^{S_{k}}(S^{\beta}\otimes S^{\delta}).
\]
In order to obtain the Young diagram $\ds_{k}$ from $\beta$, we
will have to add at least $3$ boxes in the same column so the skew
diagram $\ds_{k}/\beta$ has a column of length at least $3$. This
means that the content tableau should have $3$ rows but $\delta$
has at most $2$ rows. So by the Littlewood-Richardson rule, the $S_{k}$-representation
$S^{\ds_{k}}$ does not appear as an irreducible constituent in $\Ind_{S_{k-4}\times S_{4}}^{S_{k}}(S^{\beta}\otimes S^{\delta})$.
This finishes the proof. 
\end{proof}
Denote by $Q_{n}$ the quiver of $\SE_{n}$. Let $I$ be an admissible
ideal such that $A=\mathbb{C}(Q_{n})^{\ast}/I$ is Morita equivalent
to $\mathbb{C}\SE_{n}$. Note that we do not know much about this
ideal. Recall also that the number of times that a simple module $S(\beta)$
appears as a Jordan-Hölder factor in $P(\alpha)$ is the dimension
of the quotient space of all linear combinations of paths from $\alpha$
to $\beta$ modulo $I$.
\begin{lem}
\label{lem:NoLength2ImpliesNoLengthGraterThan2}Assume $k\geq3$.
Let $P(\alpha)$ be some projective module of $\mathbb{C}\SE_{n}$
for $\alpha\vdash k$. Assume that for every $\beta\vdash(k-2)$ the
simple module $S(\beta)$ is not a Jordan-Hölder factor of $P(\alpha)$.
Then for every $\beta\vdash r$ where $r<k-2$ the simple module $S(\beta)$
is not a Jordan-Hölder factor of $P(\alpha)$.
\end{lem}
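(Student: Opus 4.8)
The plan is to exploit the grading of the quiver $Q_n$ of $\mathbb{C}\SE_n$ recorded in \thmref{DescriptionOfTheQuiverOfPT_n}: every arrow of $Q_n$ goes from a diagram with $j+1$ boxes to a diagram with $j$ boxes. Consequently a path in $Q_n$ that starts at $\alpha\vdash k$ has length $\ell$ precisely when it ends at a diagram with $k-\ell$ boxes, and in particular every path from $\alpha$ to a diagram $\beta\vdash r$ with $r<k-2$ passes, after two arrows, through some vertex $\gamma\vdash(k-2)$. Writing such a path as $p=p_2p_1$, with $p_1$ its initial segment from $\alpha$ to $\gamma$ and $p_2$ the remaining segment from $\gamma$ to $\beta$, exhibits $p$ as a product in which $p_1$ is a path from $\alpha$ to $\gamma$.

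Next I would phrase the hypothesis inside the quiver presentation. Fix an admissible ideal $I$ with $A=\mathbb{C}(Q_n)^{\ast}/I$ Morita equivalent to $\mathbb{C}\SE_n$, and recall from the Preliminaries (following \cite[Lemma 2.4 of Chapter III]{Assem2006}) that the number of Jordan--H\"older factors of $P(\alpha)$ isomorphic to $S(\gamma)$ equals $\dim V_{\alpha,\gamma}/(I\cap V_{\alpha,\gamma})$, where $V_{\alpha,\gamma}$ is the span of the paths from $\alpha$ to $\gamma$ in $Q_n$. The hypothesis says this number is $0$ for every $\gamma\vdash(k-2)$, which is to say $V_{\alpha,\gamma}\subseteq I$ for every such $\gamma$.

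The conclusion then drops out. Let $\beta\vdash r$ with $r<k-2$ and let $p$ be any path from $\alpha$ to $\beta$. By the first paragraph $p=p_2p_1$ with $p_1\in V_{\alpha,\gamma}$ for some $\gamma\vdash(k-2)$, so $p_1\in I$; since $I$ is a two-sided ideal, $p=p_2p_1\in I$. As the paths from $\alpha$ to $\beta$ span $V_{\alpha,\beta}$, this gives $V_{\alpha,\beta}\subseteq I$, hence $\dim V_{\alpha,\beta}/(I\cap V_{\alpha,\beta})=0$, i.e.\ $S(\beta)$ is not a Jordan--H\"older factor of $P(\alpha)$.

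I do not anticipate any real difficulty; the only point needing a little care is the bookkeeping in the first paragraph, namely that all arrows of $Q_n$ decrease the box count by exactly one, so that a path from level $k$ to any level strictly below $k-2$ genuinely factors through level $k-2$ --- this is immediate from \thmref{DescriptionOfTheQuiverOfPT_n} --- together with the standard translation of ``zero Jordan--H\"older multiplicity'' into ``all relevant paths lie in $I$''. If one prefers to avoid paths, the same grading shows that $\Rad^{i}P(\alpha)/\Rad^{i+1}P(\alpha)$ is a sum of simples $S(\gamma)$ with $\gamma$ having $k-i$ boxes; the hypothesis annihilates the layer $i=2$, so $\Rad^{2}P(\alpha)=\Rad^{3}P(\alpha)=\cdots$, and nilpotency of the radical forces $\Rad^{2}P(\alpha)=0$, leaving $P(\alpha)$ with no Jordan--H\"older factor indexed by a diagram of fewer than $k-1$ boxes.
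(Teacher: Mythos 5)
Your argument is correct and is essentially the paper's own proof: both rest on the observation that every arrow of $Q_n$ drops the box count by exactly one, so every path from $\alpha\vdash k$ to a level below $k-2$ has a length-two initial segment landing at some $\gamma\vdash(k-2)$, and the hypothesis (translated via $\dim V_{\alpha,\gamma}/(I\cap V_{\alpha,\gamma})=0$ into $V_{\alpha,\gamma}\subseteq I$) together with $I$ being a two-sided ideal kills all longer paths. Your write-up is if anything slightly more explicit than the paper's, and the radical-filtration remark at the end is a valid alternative phrasing of the same idea.
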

\begin{proof}
Note that all the arrows in the quiver $Q_{n}$ are ``one step down'',
in other words, they are from diagrams with $m+1$ boxes to diagrams
with $m$ boxes. Now, if the simple module $S(\beta)$ is not a Jordan-Hölder
factor of $P(\alpha)$ for $\beta\vdash(k-2)$, this means that, modulo
the admissible ideal $I$, there are no non-trivial paths of length
$2$ starting from $\alpha$. In other words, all the paths in $(Q_{n})^{\ast}$
of length $2$ that start at $\alpha$ are elements of $I$. Therefore,
every path in $(Q_{n})^{\ast}$ of length greater than $2$ that start
at $\alpha$ is an element of $I$. Hence $S(\beta)$ is not a Jordan-Hölder
factor of $P(\alpha)$ if $\beta\vdash r$ where $r<k-2$.
\end{proof}
By \lemref{NoElementsLength2FromDS} and \lemref{NoLength2ImpliesNoLengthGraterThan2}
we get the following immediate corollary.
\begin{cor}
\label{cor:NoFactorsOfLength2AndAbove}Assume $k\geq3$. If $\alpha\vdash r$
for $r\leq k-2$ then $S(\alpha)$ is not a Jordan-Hölder factor of
$P(\ds_{k})$.
\end{cor}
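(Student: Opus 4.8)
The plan is to derive this statement directly by combining the two preceding lemmas, splitting the range $r \le k-2$ into the boundary case $r = k-2$ and the interior $r < k-2$.

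For $r = k-2$ there is nothing to do: \lemref{NoElementsLength2FromDS} already asserts that $P(\ds_{k})$ has no Jordan-H\"older factor of the form $S(\alpha)$ with $\alpha\vdash(k-2)$, which is exactly this case. For $r < k-2$ I would invoke \lemref{NoLength2ImpliesNoLengthGraterThan2} with the projective module $P(\ds_{k})$, observing that $\ds_{k}=[2,1^{k-2}]$ has precisely $k$ boxes and $k\ge 3$, so the lemma applies. Its hypothesis is that $S(\beta)$ is not a Jordan-H\"older factor of $P(\ds_{k})$ for every $\beta\vdash(k-2)$, and this is precisely the conclusion of \lemref{NoElementsLength2FromDS}. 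Hence \lemref{NoLength2ImpliesNoLengthGraterThan2} yields that $S(\beta)$ is not a Jordan-H\"older factor of $P(\ds_{k})$ for any $\beta\vdash r$ with $r<k-2$. Merging the two ranges gives the claim for all $r\le k-2$.

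I do not expect any genuine obstacle here, which is why the statement is phrased as an immediate corollary. The real work has already happened upstream: \lemref{NoElementsLength2FromDS} rules out the length-two factors through the Littlewood--Richardson and Pieri analysis of the second block superdiagonal carried out in the previous section, and \lemref{NoLength2ImpliesNoLengthGraterThan2} propagates this to longer paths using the fact that every arrow of the quiver $Q_{n}$ drops the number of boxes by exactly one, so that once all length-two paths out of $\ds_{k}$ lie in the admissible ideal $I$, every longer path out of $\ds_{k}$ does too.
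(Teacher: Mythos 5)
Your proof is correct and matches the paper exactly: the corollary is obtained by combining Lemma \ref{lem:NoElementsLength2FromDS} (for $r=k-2$) with Lemma \ref{lem:NoLength2ImpliesNoLengthGraterThan2} (for $r<k-2$), whose hypothesis is supplied by the former.
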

Therefore, by \lemref{FactorsAbove}, \lemref{JHFactorsOneBelow}
and \corref{NoFactorsOfLength2AndAbove} we obtain the following result.
\begin{prop}
\label{prop:JHFactorsOfDulaStandard}Let $k\geq3$. The only Jordan-Hölder
factors of $P(\ds_{k})$ are $S(\ds_{k})$, $S(\ds_{k-1})$ and $S(\sgn_{k-1})$,
each has multiplicity $1$.
\end{prop}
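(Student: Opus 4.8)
The plan is to dispose of the candidate Jordan-H\"older factors of $P(\ds_{k})$ by grouping them according to the number of boxes of the indexing Young diagram, and then to quote the appropriate already-established result in each range. Every simple $\SE_{n}$-module has the form $S(\alpha)$ for some Young diagram $\alpha\vdash r$ with $0\le r\le n$, so it suffices to determine, for each $r$, which $\alpha\vdash r$ occur as Jordan-H\"older factors of $P(\ds_{k})$ and with what multiplicity. The three ranges $r\ge k$, $r=k-1$, and $r\le k-2$ exhaust all possibilities.

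For $r\ge k$ I would invoke \lemref{FactorsAbove} (itself a consequence of the block upper unitriangularity of the Cartan matrix, \lemref{CartanMatrixIsUnitriangular}): the only factor with $r\ge k$ is $S(\ds_{k})$, occurring once. For $r=k-1$ I would use \lemref{JHFactorsOneBelow}, which via the description of the first superdiagonal block in \propref{CartanMatrixLength1} shows that the only factors are $S(\ds_{k-1})$ and $S(\sgn_{k-1})$, each with multiplicity $1$. For $r\le k-2$ I would apply \corref{NoFactorsOfLength2AndAbove}, which asserts that there are no such factors at all. Assembling the three ranges then yields exactly the list $S(\ds_{k}),S(\ds_{k-1}),S(\sgn_{k-1})$, each of multiplicity one, which is the assertion of the proposition.

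For the proposition itself there is essentially no obstacle: it is a bookkeeping corollary of the preceding results. The genuine difficulty lies upstream, in \lemref{NoElementsLength2FromDS}, where one must rule out $S^{\ds_{k}}$ as an irreducible constituent of the induced modules attached to both orbits $O_{1}$ and $O_{2}$ of \lemref{OrbitsOfActionSecondSubdiagonal}. The key mechanism there is a column-length obstruction in the Littlewood--Richardson rule: the diagram $\ds_{k}$ has a column of length $k-1\ge 3$, whereas the content diagrams $[4]$, $[2,2]$, $[3,1]$ produced by inducing through $S_{k-4}\times D_{4}$ each have at most two rows, and similarly $\ds_{k}$ cannot be obtained by adding three boxes no two in a column; the degenerate case $k=3$ is handled separately via \remref{RemarkOnLevel3}. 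Once those inputs are available, together with \lemref{NoLength2ImpliesNoLengthGraterThan2} to propagate the vanishing down the acyclic quiver, the present proposition is immediate.
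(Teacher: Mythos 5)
Your proof is correct and matches the paper's exactly: the proposition is stated there as an immediate consequence of Lemma \ref{lem:FactorsAbove}, Lemma \ref{lem:JHFactorsOneBelow} and Corollary \ref{cor:NoFactorsOfLength2AndAbove}, which is precisely your trichotomy on the number of boxes $r$. Your remarks on where the real work lies (Lemma \ref{lem:NoElementsLength2FromDS} and the Littlewood--Richardson column obstruction) are accurate but not needed for this step.
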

We will need another observation.
\begin{lem}
\label{lem:SgnIsProjective}The simple module $S(\sgn_{k})$ equals
the projective module $P(\sgn_{k})$.
\end{lem}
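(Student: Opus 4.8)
The plan is to read the statement off the quiver $Q_{n}$ of $\mathbb{C}\SE_{n}$: I will show that the vertex $\sgn_{k}=[1^{k}]$ has no outgoing arrow, and then deduce that the projective $P(\sgn_{k})$ is one-dimensional, hence simple, hence equal to $S(\sgn_{k})$.

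First I would invoke \thmref{DescriptionOfTheQuiverOfPT_n}. Every arrow out of $\sgn_{k}$ must terminate at some Young diagram $\beta\vdash(k-1)$, and such an arrow exists only if $[1^{k}]$ can be constructed from $\beta$ by removing one box and then adding two boxes, no two of which lie in the same column. But $[1^{k}]$ occupies a single column, so one cannot adjoin two boxes lying in two distinct columns and still obtain $[1^{k}]$; consequently no such $\beta$ exists and $\sgn_{k}$ emits no arrow in $Q_{n}$. (Equivalently, by \propref{CartanMatrixLength1} every $(\beta,\sgn_{k})$ entry of the Cartan matrix with $\beta\vdash(k-1)$ vanishes.)

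Next I would pass to an admissible quotient $A=\mathbb{C}(Q_{n})^{\ast}/I$ that is Morita equivalent to $\mathbb{C}\SE_{n}$, as recalled in the preliminaries. There the indecomposable projective $P(\sgn_{k})=A\cdot(1_{\sgn_{k}}+I)$ is spanned, modulo $I$, by the classes of the paths of $(Q_{n})^{\ast}$ that start at $\sgn_{k}$; since $\sgn_{k}$ emits no arrow, the only such path is the empty path $1_{\sgn_{k}}$, so $\dim P(\sgn_{k})=1$. A one-dimensional module is simple and coincides with its own top, so $P(\sgn_{k})\simeq S(\sgn_{k})$; transporting this across the Morita equivalence gives the statement for $\mathbb{C}\SE_{n}$. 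One can also bypass the presentation: $\Rad P(\sgn_{k})/\Rad^{2}P(\sgn_{k})\cong\bigoplus_{\beta}S(\beta)^{\dim\Ext^{1}(S(\sgn_{k}),S(\beta))}=0$ because there is no arrow out of $\sgn_{k}$, so $\Rad P(\sgn_{k})=0$ by nilpotency of the radical, i.e.\ $P(\sgn_{k})=S(\sgn_{k})$.

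I do not expect any real obstacle: the only point that needs care is the elementary observation that a one-column Young diagram cannot be obtained by adding two boxes in two different columns, and this is exactly what rules out all potential outgoing arrows; everything else is a routine application of the quiver formalism developed in \secref{CartanMatrixOfEICategoryAlgebra} and the preliminaries.
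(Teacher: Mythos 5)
Your proof is correct and follows essentially the same route as the paper: both arguments observe via \thmref{DescriptionOfTheQuiverOfPT_n} that the one-column diagram $[1^{k}]$ cannot be obtained by adding two boxes in distinct columns, so $\sgn_{k}$ has no outgoing arrows, and then conclude that the only path out of $\sgn_{k}$ in $(Q_{n})^{\ast}$ is the trivial one, forcing $P(\sgn_{k})=S(\sgn_{k})$. The only cosmetic difference is that the paper closes by counting Jordan--H\"older factors via \lemref{CartanMatrixIsUnitriangular}, while you close by a dimension count (or the radical); these are equivalent.
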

\begin{proof}
Consider the description of $Q_{n}$ given in \thmref{DescriptionOfTheQuiverOfPT_n}.
It is easy to observe that one cannot add two boxes not in the same
column and obtain $\sgn_{k}$, so there are no arrows in $Q_{n}$
starting at $\sgn_{k}$. Therefore, the only path in $(Q_{n})^{\ast}$
which starts at $\sgn_{k}$ is the trivial one. This implies that
the only simple module appears as a Jordan-Hölder factor in $P(\sgn_{k})$
is $S(\sgn_{k})$ and it appears only once by \lemref{CartanMatrixIsUnitriangular}
so 
\[
P(\sgn_{k})=S(\sgn_{k})
\]
as required.
\end{proof}
Now we can prove our desired result using a classical argument of
homological dimension shift.
\begin{prop}
Assume $n\geq k\geq2$. Then
\[
\Ext^{k-1}(S(\ds_{k}),S([1]))\simeq\mathbb{C}
\]
where $S(\ds_{k})$ and $S([1])$ are $\SE_{n}$ representations.
\end{prop}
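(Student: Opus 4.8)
The plan is to prove this by induction on $k$ via a dimension-shift argument, peeling off projective covers one box at a time until we reach the base case $k=2$. The crucial structural input is Proposition~\ref{prop:JHFactorsOfDulaStandard}: for $k\geq 3$ the projective $P(\ds_k)$ has exactly three composition factors, $S(\ds_k)$ on top (multiplicity one, it is the head) and $S(\ds_{k-1})$, $S(\sgn_{k-1})$ below it. Since $P(\ds_k)\twoheadrightarrow S(\ds_k)$ with kernel $\Rad P(\ds_k)$, the radical has composition factors $S(\ds_{k-1})$ and $S(\sgn_{k-1})$, each once. Now $S(\sgn_{k-1})=P(\sgn_{k-1})$ is projective by Lemma~\ref{lem:SgnIsProjective}, and $\ds_{k-1}$ sits one level above it in the $\Jc$-order while $\sgn_{k-1}$ does not dominate $\ds_{k-1}$ (no arrow $\sgn_{k-1}\to\ds_{k-1}$ in the co-directed quiver), so $\Ext^1(S(\ds_{k-1}),S(\sgn_{k-1}))$ can be nonzero but $\Ext^1(S(\sgn_{k-1}),S(\ds_{k-1}))=0$. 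This forces $\Rad P(\ds_k)$ to be an extension $0\to S(\sgn_{k-1})\to \Rad P(\ds_k)\to S(\ds_{k-1})\to 0$, and because $P(\ds_{k-1})$ also surjects onto $S(\ds_{k-1})$ we expect $\Rad P(\ds_k)$ to be a quotient of $P(\ds_{k-1})$ — more precisely, I would argue that $\Rad P(\ds_k)\cong P(\ds_{k-1})/S(\ds_{k-1})'$ is itself the projective cover $P(\ds_{k-1})$ if that projective has only those two factors, which by the same Proposition (applied at $k-1$) it does not — $P(\ds_{k-1})$ has factors $S(\ds_{k-1}),S(\ds_{k-2}),S(\sgn_{k-2})$. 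So instead the cleaner route is the short exact sequence obtained directly from the projective cover.

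\textbf{The inductive step in detail.} From $0\to \Rad P(\ds_k)\to P(\ds_k)\to S(\ds_k)\to 0$ and the fact that $P(\ds_k)$ is projective, the long exact sequence of $\Ext^\bullet(-,S([1]))$ gives $\Ext^{m}(S(\ds_k),S([1]))\cong \Ext^{m-1}(\Rad P(\ds_k),S([1]))$ for all $m\geq 2$. Since $\Rad P(\ds_k)$ is the extension of $S(\ds_{k-1})$ by the projective $S(\sgn_{k-1})=P(\sgn_{k-1})$, applying $\Ext^\bullet(-,S([1]))$ to $0\to P(\sgn_{k-1})\to \Rad P(\ds_k)\to S(\ds_{k-1})\to 0$ and using $\Ext^{\geq 1}(P(\sgn_{k-1}),-)=0$ yields $\Ext^{j}(\Rad P(\ds_k),S([1]))\cong \Ext^{j}(S(\ds_{k-1}),S([1]))$ for all $j\geq 1$, and for $j=0$ we get $\Hom(\Rad P(\ds_k),S([1]))$ fits into an exact sequence with $\Hom(P(\sgn_{k-1}),S([1]))=0$ (as $k-1\neq 1$ forces $\sgn_{k-1}\neq [1]$ when $k\geq 3$), so that map is injective too. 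Combining: $\Ext^{k-1}(S(\ds_k),S([1]))\cong \Ext^{k-2}(\Rad P(\ds_k),S([1]))\cong \Ext^{k-2}(S(\ds_{k-1}),S([1]))$, which by the inductive hypothesis is $\mathbb{C}$. One must be a little careful when $k-2=0$, i.e. $k=3$: there the last isomorphism reads $\Ext^{2}(S(\ds_3),S([1]))\cong \Ext^{1}(\Rad P(\ds_3),S([1]))\cong\Ext^1(S(\ds_2),S([1]))$, and $\ds_2=[2]=\tr_2$, so this is the $\Ext^1$ from the trivial-at-level-$2$ simple down to $S([1])$, which the quiver of Example~\ref{exa:QuiverOfPT4} shows is one-dimensional (there is exactly one arrow $[2]\to[1]$).

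\textbf{The base case} $k=2$ asserts $\Ext^{1}(S([2]),S([1]))\cong\mathbb{C}$, i.e. there is exactly one arrow from the vertex $[2]$ to the vertex $[1]$ in $Q_n$. By Theorem~\ref{thm:DescriptionOfTheQuiverOfPT_n} the number of such arrows is the number of ways to build $[2]$ from $[1]$ by removing one box and adding two not in the same column: remove the single box of $[1]$ to get $\varnothing$, then add two boxes not in the same column, which forces $[2]$, and there is exactly one way. Hence $\dim\Ext^1(S([2]),S([1]))=1$.

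\textbf{The main obstacle} I anticipate is pinning down the module structure of $\Rad P(\ds_k)$ precisely enough — specifically, confirming that $S(\sgn_{k-1})$ really is a submodule (not just a quotient) of $\Rad P(\ds_k)$, so that the splitting-off of the projective summand in the long exact sequence goes through cleanly. This should follow from the co-directedness of the quiver (all arrows go down in the $\Jc$-order) together with $\Ext^1(S(\sgn_{k-1}),S(\ds_{k-1}))=0$ (there is no arrow $\sgn_{k-1}\to\ds_{k-1}$ since one cannot add two boxes, no two in a column, to $\sgn_{k-2}$'s predecessor to reach $\sgn_{k-1}$ — indeed $\sgn_{k-1}$ has no outgoing arrows at all by the computation in Lemma~\ref{lem:SgnIsProjective}), which forces the submodule generated by the socle to contain the projective-simple $S(\sgn_{k-1})$. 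A secondary point to handle carefully is that all the $\Hom$ terms $\Hom(S(\ds_j),S([1]))$ and $\Hom(\Rad P(\ds_k),S([1]))$ vanish for $j\geq 2$ since the relevant simples are non-isomorphic and $\Rad$ has no composition factor $S([1])$ — this is what makes the dimension shift land exactly on $\mathbb{C}$ rather than on something larger.
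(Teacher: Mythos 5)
Your proposal is correct and follows essentially the same route as the paper: induction on $k$ with a dimension shift along $0\to\Rad P(\ds_k)\to P(\ds_k)\to S(\ds_k)\to 0$, using Proposition \ref{prop:JHFactorsOfDulaStandard}, the projectivity of $S(\sgn_{k-1})$, and the quiver description for the base case. The only (cosmetic) difference is that the paper observes that \emph{both} $\Ext^{1}(S(\ds_{k-1}),S(\sgn_{k-1}))$ and $\Ext^{1}(S(\sgn_{k-1}),S(\ds_{k-1}))$ vanish (no arrows between distinct diagrams with the same number of boxes), so $\Rad P(\ds_k)\simeq S(\ds_{k-1})\oplus S(\sgn_{k-1})$ outright, which disposes of the submodule-versus-quotient worry you raise at the end.
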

\begin{proof}
We will prove this by induction on $k$. For the base step $k=2$,
we know by the quiver description (\thmref{DescriptionOfTheQuiverOfPT_n})
that
\[
\Ext^{1}(S(\ds_{2}),S([1]))=\Ext^{1}(S([2]),S([1]))\simeq\mathbb{C}
\]

Now assume that 
\[
\Ext^{k-1}(S(\ds_{k}),S([1]))\simeq\mathbb{C}
\]
and we will prove that 
\[
\Ext^{k}(S(\ds_{k+1}),S([1]))\simeq\mathbb{C}.
\]
Consider the short exact sequence
\[
0\to K\to P(\ds_{k+1})\to S(\ds_{k+1})\to0.
\]
Where $K$ is some module (actually $K\simeq\Rad P(\ds_{k+1})$ but
we don't need this fact). By \propref{JHFactorsOfDulaStandard} we
know that the Jordan-Hölder factors of $K$ are $S(\ds_{k})$ and
$S(\sgn_{k})$. Note that there are no arrows in $Q_{n}$ from $\ds_{k}$
to $\sgn_{k}$ or vice versa (there are no arrows between any distinct
Young diagrams with the same number of boxes). Therefore,
\[
\Ext^{1}(S(\ds_{k}),S(\sgn_{k}))=\Ext^{1}(S(\sgn_{k}),S(\ds_{k}))=0
\]
and this implies that the only extension of these two modules is the
direct sum, so 
\[
K\simeq S(\ds_{k})\oplus S(\sgn_{k}).
\]
Therefore, the above short exact sequence is actually
\[
0\to S(\ds_{k})\oplus S(\sgn_{k})\to P(\ds_{k+1})\to S(\ds_{k+1})\to0.
\]

Now we use the long exact sequence theorem with the $\SE_{n}$-module
$S([1])$ we obtain the following exact sequence: 

\begin{align*}
0 & \to\Hom(S(\ds_{k+1}),S([1]))\to\Hom(P(\ds_{k+1}),S([1]))\to\Hom(S(\ds_{k})\oplus S(\sgn_{k}),S([1]))\to\\
 & \to\Ext^{1}(S(\ds_{k+1}),S([1]))\to\Ext^{1}(P(\ds_{k+1}),S([1]))\to\Ext^{1}(S(\ds_{k})\oplus S(\sgn_{k}),S([1]))\to\ldots\\
 & \to\Ext^{m}(S(\ds_{k+1}),S([1]))\to\Ext^{m}(P(\ds_{k+1}),S([1]))\to\Ext^{m}(S(\ds_{k})\oplus S(\sgn_{k}),S([1]))\to\ldots.
\end{align*}
Clearly, 
\[
\Ext^{m}(P(\ds_{k+1}),S([1]))=0
\]
for every $m\geq1$ since $P(\ds_{k+1})$ is projective. So obtain
the following exact sequence
\[
0\to\Ext^{k-1}(S(\ds_{k})\oplus S(\sgn_{k}),S([1]))\to\Ext^{k}(S(\ds_{k+1}),S([1]))\to0.
\]
Now, since $\Ext^{k-1}$ is an additive functor this equals
\[
0\to\Ext^{k-1}(S(\ds_{k}),S([1]))\oplus\Ext^{k-1}(S(\sgn_{k}),S([1]))\to\Ext^{k}(S(\ds_{k+1}),S([1]))\to0.
\]
However, by \lemref{SgnIsProjective}, we know that $S(\sgn_{k})$
is also projective so 
\[
\Ext^{k-1}(S(\sgn_{k}),S([1]))=0.
\]
Now, we remain with the exact sequence
\[
0\to\Ext^{k-1}(S(\ds_{k}),S([1]))\to\Ext^{k}(S(\ds_{k+1}),S([1]))\to0
\]
which implies that 
\[
\Ext^{k}(S(\ds_{k+1}),S([1]))\simeq\Ext^{k-1}(S(\ds_{k}),S([1]))\simeq\mathbb{C}
\]
as required.
\end{proof}
Since the longest path in $Q_{n}$ which start at $\ds_{k}$ is of
length $k-1$ we obtain immediately the following corollary.
\begin{cor}
Assume $n\geq k\geq2$. Then
\[
\pd\left(S(\ds_{k})\right)=k-1.
\]
\end{cor}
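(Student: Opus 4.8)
The lower bound is already in hand: by the proposition just proved, $\Ext^{k-1}(S(\ds_{k}),S([1]))\simeq\mathbb{C}$ is nonzero, and by the definition of projective dimension this forces $\pd(S(\ds_{k}))\geq k-1$. So the only content of the corollary is the reverse inequality $\pd(S(\ds_{k}))\leq k-1$, for which I would give two (essentially equivalent) arguments.

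The quick argument: by \thmref{DescriptionOfTheQuiverOfPT_n} every arrow of $Q_{n}$ runs from a Young diagram with $m+1$ boxes to one with $m$ boxes, so $Q_{n}$ is acyclic and every directed path starting at $\ds_{k}$, which sits at the level of $k$ boxes, has length at most $k-1$. Since the projective dimension of a simple module over an acyclic quiver algebra is bounded above by the length of the longest path in the quiver starting at the corresponding vertex, this gives $\pd(S(\ds_{k}))\leq k-1$ at once, hence equality. The hands-on variant, using only results already stated, is to write down the minimal projective resolution of $S(\ds_{k})$: by \propref{JHFactorsOfDulaStandard} the composition factors of $P(\ds_{k})$ are $S(\ds_{k})$, $S(\ds_{k-1})$ and $S(\sgn_{k-1})$, each once, and since $\ds_{k-1}$ and $\sgn_{k-1}$ are distinct diagrams with the same number of boxes there are no arrows between them in $Q_{n}$, so $\Ext^{1}$ between the corresponding simples vanishes in both directions and $\Rad P(\ds_{k})\simeq S(\ds_{k-1})\oplus S(\sgn_{k-1})$. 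By \lemref{SgnIsProjective} the summand $S(\sgn_{k-1})$ is projective, so the first syzygy of $S(\ds_{k})$ is $S(\ds_{k-1})$ up to a projective direct summand; iterating this, after $k-2$ steps one reaches the simple $S([2])$, and since $\Rad P([2])=S([1])=P([1])$ is projective (the point $[1]$ being a sink of $Q_{n}$; the small-$k$ base cases use \exaref{QuiverOfPT4} and \lemref{CartanMatrixIsUnitriangular}), the next syzygy is projective. Hence $\pd(S(\ds_{k}))\leq k-1$.

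There is no real obstacle here; the corollary is a formal consequence of the preceding proposition and the description of the quiver. The one point worth a word of care is the splitting $\Rad P(\ds_{j})\simeq S(\ds_{j-1})\oplus S(\sgn_{j-1})$ invoked at each stage of the explicit resolution, which rests on $Q_{n}$ having no arrows between distinct Young diagrams with equally many boxes (\thmref{DescriptionOfTheQuiverOfPT_n}), exactly as in the proof of the preceding proposition. Granting that, both the syzygy computation and the longest-path bound are routine.
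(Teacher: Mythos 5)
Your first ("quick") argument is exactly the paper's proof: the nonvanishing of $\Ext^{k-1}(S(\ds_{k}),S([1]))$ gives $\pd(S(\ds_{k}))\geq k-1$, and the upper bound follows because the longest path in $Q_{n}$ starting at $\ds_{k}$ has length $k-1$. Your explicit syzygy computation is a correct but unnecessary elaboration of the same point; the proposal is correct.
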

In particular, we obtain from here that 
\[
\pd(S(\ds_{n}))=n-1
\]
and since the longest path in $Q_{n}$ is of length $n-1$ we obtain
as an immediate corollary the goal of this paper.
\begin{cor}
The global dimension of $\mathbb{C}\SE_{n}$ and hence of $\mathbb{C}\E_{n}$
and $\mathbb{C}\PT_{n}$ is $n-1$ for $n\geq1$.
\end{cor}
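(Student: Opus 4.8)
The plan is to establish the two inequalities $\gd\mathbb{C}\SE_{n}\le n-1$ and $\gd\mathbb{C}\SE_{n}\ge n-1$ separately, and then transport the resulting equality across the Morita equivalences and the isomorphism $\mathbb{C}\PT_{n}\simeq\mathbb{C}\E_{n}$. For the upper bound I would invoke the general principle recalled in the preliminaries that the global dimension of a finite dimensional algebra is at most the length of the longest path in its ordinary quiver, provided that quiver is acyclic. By \thmref{DescriptionOfTheQuiverOfPT_n} every arrow of $Q_{n}$ runs from a Young diagram with $m+1$ boxes to one with $m$ boxes; hence $Q_{n}$ is acyclic and a longest path has length exactly $n-1$ (for instance any path from an $n$-box diagram down to $\varnothing$). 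This yields $\gd\mathbb{C}\SE_{n}\le n-1$.

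For the lower bound I would use the standard reduction $\gd A=\sup\{\pd(S)\mid S\text{ simple}\}$ and exhibit a single simple module of projective dimension $n-1$. The natural candidate, and the one produced by the analysis above, is $S(\ds_{n})$: the corollary immediately preceding the statement, specialized to $k=n$, gives $\pd(S(\ds_{n}))=n-1$ for $n\ge2$. Hence $\gd\mathbb{C}\SE_{n}\ge n-1$, and combined with the upper bound $\gd\mathbb{C}\SE_{n}=n-1$.

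To finish I would pass to the other two algebras. Since $\SE_{n}$ is the skeleton of $\E_{n}$, the algebras $\mathbb{C}\SE_{n}$ and $\mathbb{C}\E_{n}$ are Morita equivalent and so have equal global dimension; and $\mathbb{C}\E_{n}\simeq\mathbb{C}\PT_{n}$ gives the same value for $\PT_{n}$. The degenerate case $n=1$ I would dispatch by hand: $\SE_{1}$ has no non-identity morphisms, so $\mathbb{C}\SE_{1}\cong\mathbb{C}\times\mathbb{C}$ is semisimple, with global dimension $0=n-1$.

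As this makes clear, at the present stage the corollary is a short assembly, and the genuine obstacle lies one step earlier, in the lower bound $\pd(S(\ds_{n}))=n-1$. That in turn depends on the dimension-shift computation of $\Ext^{k-1}(S(\ds_{k}),S([1]))$, which requires the precise list of Jordan--H\"older factors of $P(\ds_{k})$ from \propref{JHFactorsOfDulaStandard}---namely $S(\ds_{k})$, $S(\ds_{k-1})$ and $S(\sgn_{k-1})$, each with multiplicity one---together with the projectivity of $S(\sgn_{k})$ from \lemref{SgnIsProjective}. Establishing that Jordan--H\"older list is where the real difficulty sits, resting on the Littlewood--Richardson and Pieri analysis of the second block superdiagonal of the Cartan matrix carried out in \secref{TheSecondBlock}; once those inputs are granted, the proof of the stated corollary is exactly the combination described here.
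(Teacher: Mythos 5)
Your proof is correct and follows the paper's argument exactly: the upper bound comes from the acyclic quiver $Q_{n}$, the lower bound from $\pd(S(\ds_{n}))=n-1$, and the result is transported via Morita equivalence and the isomorphism $\mathbb{C}\PT_{n}\simeq\mathbb{C}\E_{n}$; your explicit treatment of $n=1$ is a welcome addition that the paper leaves implicit. One small slip in your parenthetical: there is no path in $Q_{n}$ ending at $\varnothing$ (an arrow into $\beta$ requires removing a box from $\beta$, which is impossible for the empty diagram), and a path from an $n$-box diagram down to $\varnothing$ would in any case have length $n$ rather than $n-1$; the longest paths run from level $n$ down to level $1$, e.g. $[2,1^{n-2}]\to[2,1^{n-3}]\to\cdots\to[2]\to[1]$, and it is precisely the absence of arrows into $\varnothing$ that caps the length at $n-1$.
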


\bibliographystyle{plain}
\bibliography{library}

\end{document}